% SIAM Article Template
\documentclass[review,hidelinksonefignum,onetabnum]{siamart220329}
\usepackage{bm}
\usepackage{graphicx}
\usepackage{epstopdf}
\usepackage{epsfig}
\usepackage{subcaption}
\usepackage{amsmath,amssymb, amsfonts,mathtools}

\newtheorem{thm}{Theorem}[section]

\newtheorem{dfn}[thm]{Definition}

\newtheorem{pro}[thm]{Proposition}

% Information that is shared between the article and the supplement
% (title and author information, macros, packages, etc.) goes into
% ex_shared.tex. If there is no supplement, this file can be included
% directly.
\nolinenumbers
% SIAM Shared Information Template
% This is information that is shared between the main document and any
% supplement. If no supplement is required, then this information can
% be included directly in the main document.

% Packages and macros go here
\usepackage{lipsum}
\usepackage{amsfonts}
\usepackage{graphicx}
\usepackage{epstopdf}

\usepackage{algorithmic}
\ifpdf
  \DeclareGraphicsExtensions{.eps,.pdf,.png,.jpg}
\else
  \DeclareGraphicsExtensions{.eps}
\fi

% Add a serial/Oxford comma by default.

% Used for creating new theorem and remark environments
\newsiamremark{remark}{Remark}
\newsiamremark{hypothesis}{Hypothesis}
\crefname{hypothesis}{Hypothesis}{Hypotheses}
\newsiamthm{claim}{Claim}

% Sets running headers as well as PDF title and authors
\headers{A convergent interacting particle method}{T. Zhang, Z. Wang, J. Xin and Z. Zhang}

% Title. If the supplement option is on, then "Supplementary Material"
% is automatically inserted before the title.
\title{A convergent interacting particle method for computing KPP front speeds in random flows\thanks{To appear in SIAM/ASA J Uncertainty Quantification.
%\funding{This work was funded by the Fog Research Institute under contract no.~FRI-454.}
}}

% Authors: full names plus addresses.
\author{Tan Zhang\thanks{Department of Mathematics, The University of Hong Kong, Pokfulam Road, Hong Kong SAR, P.R.China.
  (\email{thta@connect.hku.hk}).}
\and Zhongjian Wang\thanks{Division of Mathematical Sciences, School of Physical and Mathematical Sciences, Nanyang  Technological University, Singapore 637371.
  (\email{zhongjian.wang@ntu.edu.sg}.)}
\and Jack Xin\thanks{Department of Mathematics, University of California at Irvine, Irvine, CA 92697, USA.
  (\email{jxin@math.uci.edu}.)}
\and Zhiwen Zhang\thanks{Corresponding author. Department of Mathematics, The University of Hong Kong, Pokfulam Road, Hong Kong SAR, P.R.China. Materials Innovation Institute for Life Sciences and Energy (MILES), HKU-SIRI, Shenzhen, P.R. China. (\email{zhangzw@hku.hk}).} }

\usepackage{amsopn}

%%% Local Variables: 
%%% mode:latex
%%% TeX-master: "ex_article"
%%% End: 

% Optional PDF information
\ifpdf
\hypersetup{
  pdftitle={A convergent interacting particle
  method for computing KPP front speeds in random flows},
  pdfauthor={T. Zhang, Z. Wang, J. Xin and Z. Zhang}
}
\fi

% The next statement enables references to information in the
% supplement. See the xr-hyperref package for details.

\externaldocument[][nocite]{ex_supplement}

% FundRef data to be entered by SIAM
%<funding-group specific-use="FundRef">
%<award-group>
%<funding-source>
%<named-content content-type="funder-name"> 
%</named-content> 
%<named-content content-type="funder-identifier"> 
%</named-content>
%</funding-source>
%<award-id> </award-id>
%</award-group>
%</funding-group>

\begin{document}

\maketitle

% REQUIRED
\begin{abstract}
\noindent
This paper aims to efficiently compute the spreading speeds of reaction-diffusion-advection fronts in divergence-free random flows under the Kolmogorov-Petrovsky-Piskunov nonlinearity. We develop a stochastic interacting particle method (IPM) for the reduced principal eigenvalue (Lyapunov exponent) problem of an associated linear advection-diffusion operator with spatially random coefficients. The Fourier representation of the random advection field and the Feynman-Kac formula of the principal eigenvalue (Lyapunov exponent) form the foundation of our method, which is implemented as a genetic evolution algorithm. The particles undergo advection-diffusion and mutation/selection through a fitness function originated in the Feynman-Kac semigroup. We analyze the convergence of the algorithm based on operator splitting and present numerical results on representative flows such as 2D cellular flow and 3D Arnold-Beltrami-Childress (ABC) flow under random perturbations. The 2D examples serve as a consistency check with semi-Lagrangian computation. The 3D results demonstrate that IPM, being mesh-free and self-adaptive,  is easy to implement and efficient for computing front spreading speeds in the advection-dominated regime for high-dimensional random flows on unbounded domains where no truncation is needed.
\end{abstract}

% REQUIRED
\begin{keywords}
KPP front speeds; random flows; Feynman-Kac semigroups; interacting particle method;  
%eigenvalue problems; 
convergence analysis.	 
\end{keywords}

% REQUIRED
\begin{MSCcodes}
35K57, 47D08, 65C35, 65L20, 65N25.
\end{MSCcodes}

\section{Introduction}\label{sec:intro}
\noindent Front propagation in complex fluid flows is a nonlinear phenomenon commonly found in many scientific and technological areas such as chemical reaction fronts in liquid, transport in porous media, and turbulence combustion \cite{xin2009}. If the fluid velocity field is simulated by stochastic processes with known statistics, 
one challenging mathematical problem is to characterize and compute the propagation velocity of the front (the dynamic asymptotic stability of the front in the large time limit) by utilizing the governing partial differential equations. %Another mathematical problem is to characterize the propagation velocity in terms of flow statistics.}
%a fundamental problem is to analyze and compute front speeds in such complex fluid flows. 
An extensively studied model is the reaction-diffusion-advection (RDA) equation with Kolmogorov-Petrovsky-Piskunov (KPP) nonlinearity \cite{kolmogorov1937}:
\begin{equation}
    u_t = \kappa \Delta_{\bm x} u + (\bm v \cdot \nabla_{\bm x}) u + f(u), \quad t \in \mathbb{R}^+,\quad \bm x = (x_1,...,x_d)^T \in \mathbb{R}^d.
    \label{KPPOriginalEquation}
\end{equation}
Here in equation (\ref{KPPOriginalEquation}), $u$ is the concentration of reactant or population, $\kappa$ is the diffusion constant, and $\bm v$ is a prescribed incompressible velocity field. We focus on the case with KPP reaction term $f(u) = u(1-u)$ satisfying $f(u) \leq u f'(0)$. In the following analysis and numerical examples, we will fix $\kappa = 1$, while changing the magnitude of the velocity field $\bm v$, or equivalently varying P\'{e}clet number $Pe$ which measures the relative strength of advection and diffusion.  

The propagation of reaction-diffusion fronts in fluid flows has been an actively researched field for decades \cite{gartner1979,xin1992existence,majda1994large,xin2000front,berestycki2005speed,Nolen2008,NolenJack2009,xin2009,Nolen2012existence}. The dynamic asymptotic stability of the system \eqref{KPPOriginalEquation} means that if the initial data are prescribed in the form: $u_0(\bm x) = U_{c}(\bm x) + \tilde{u}(\bm x)$, where $U_{c}(\bm x)$ is a front profile corresponding to the wave speed $c$ with propagation direction $\bm z$ and $\tilde{u}(\bm x)$ is a smooth and spatially decaying perturbation, then $u(\bm x, t)$ converges to $U_{c}(\bm x - c t \bm z+ \bm \xi_0 )$ in a suitably weighted Banach space as $t\to \infty$ for some constant vector $\bm \xi_0$. Due to the spatial translation-invariance of the original equation, it may have a family of traveling fronts for each allowable wave speed $c$. We shall call the front moves with the slowest speed in absolute value as the critical front and its corresponding wave speed $c = c^*$ as the front speed \cite{Nolen2007,xin2000front}. The linear corrector equation and the variational formula aided the computation of KPP front speeds. For spatially periodic velocity field $\bm v = \bm v(\bm x)$, the minimal front speed formula  was known  \cite{Freidlin1985,gartner1979} around 1980s: $c^*(\bm z) = \inf_{(\bm z, \lambda \bm e) > 0} \mu(\lambda \bm e) / (\bm z, \lambda \bm e)$. Here $\bm z \in \mathbb{R}^d$ is the front propagation direction,  $\mu(\lambda \bm e)$ is the principal eigenvalue of the linear advection-reaction-diffusion operator  $\mathcal{A}^{\lambda}$ parameterized by dual unit vector
$\bm e \in \mathbb{R}^d$ and $\lambda \in \mathbb{R}$:
\begin{equation}
    \mathcal{A}^{\lambda} \equiv \kappa \Delta_{\bm x} \cdot + 
    (-2\kappa \lambda \bm e  + \bm v) \cdot \nabla_{\bm x} \cdot + (\kappa  \lambda^2 - \lambda \bm v \cdot \bm e + f'(0)),
    \label{peig}
\end{equation}   
on a $d$-dimensional torus $\mathbb{T}^d$. When $\bm v$ is space-time ergodic,  the $c^*$ variational formula generalizes \cite{NolenJack2009} with $\mu$ being the principal Lyapunov exponent and  $\mathbb{R}^d$ in place of  $\mathbb{T}^d$. Existing works include the adaptive finite element method (FEM) for steady flows  \cite{shen2013finite2d, shen2013finite3d}; edge-averaged FEM with algebraic multigrid acceleration  \cite{zu2015} for two dimensional (2D) time-periodic flows; semi-Lagrangian (SL) method for 2D random flows \cite{Nolen2008}. 
When flow is random with large amplitude and spatially 3D, the FEM and SL calculations of $\mu$ become incredibly expensive. An alternative is the (fully Lagrangian, mesh-free, and self-adaptive) interacting particle method (IPM, \cite{Junlong2022}) successfully studied for cellular and chaotic flows. This motivates us to develop IPM for computing KPP front speeds in random flows in this paper.

We first combine operator splitting and random Fourier methods to approximate the underlying linear advection-diffusion operator (Eq.\eqref{linearCorrEqu}), then leverage the Feynman-Kac (FK)  representation of the parabolic evolution and develop an IPM to compute $c^*$. Since directly approximating the FK formula is unstable,  we study a normalized version for $\mu$ based on the corresponding FK semigroups. Using perturbation theory (Proposition \ref{EigExistforrandomperturbation1}), we guarantee the existence of $\mu$ when a relatively small random field with an equispaced discrete spectrum is added to a deterministic flow
%in randomly perturbed flows 
on $\mathcal{D}$, where $\mathcal{D} = [0, \frac{1}{\Delta k}]^d$ is a $d$-dimensional torus and $\Delta k$ is the frequency step of $\bm v$. The random flow $\bm v$ with the equispaced discrete spectrum is realizable by the random Fourier method and approximated by the series $\{\bm v_j\}$ along with the convergence of their expectations in $L^2(\mathcal{D})$ ( i.e. as $j \to \infty, \mathbb{E}[||\bm v -\bm v_j||_{L^2(\mathcal{D})}] \to 0$,  Eq.\eqref{ErrestforEwvr_vjL2} ) and $\mu$ approximations (Theorem \ref{Thm::ConvOfPrinEigofRFM}). Next, we estimate the approximation generated by the random Fourier method associated with the solution operator of non-autonomous evolution equations (Theorem \ref{Thm::Errofe3ande}). It demonstrates that the solution operator approximated by the random Fourier method has a convergence rate of $O(\sqrt{(\Delta t)^2 +||\bm v - \bm v_j||^2_{L^2(\mathcal{D})} })$, where $\Delta t$ denotes the discrete time step.
We also provide an error estimate of the IPM on $c^*$ in random flows on $\mathcal{D}$. It indicates that: \textit{the IPM has the following convergence result:} 
\begin{equation}
    \mu_{IPM}(\lambda) = \mu(\lambda) + O\big((1-\frac{\theta}{\vartheta})^M\big) + O\big((\Delta t)^{\frac{1}{2}}\big) + O( ||\bm v - \tilde{\bm v}||_{L^2(\mathcal{D})}),
\end{equation}
\textit{where $\mu, \mu_{IPM}$ are the corresponding principle eigenvalues of the original system and the IPM. $\tilde{\bm v}$ is an approximation to $\bm v$ and is generated by the random Fourier method. $0<\theta <\vartheta$ are parameters and $M$ is the iteration number.} (Theorem \ref{Thm::TotalErrEstforprinEig38})  

Numerically, we observe the convergence of $c^*$ in the large volume limit of $\mathcal{D}$ to 
 the values from a direct IPM computation on $\mathbb{R}^d$, encouraging a future study along the line of periodization of stochastic homogenization (see \cite{Otto2012,Otto2015,Otto2021} for elliptic problems). Flexible to both $\mathcal{D}$ and $\mathbb{R}^d$, 
IPM is advantageous for high-dimensional and stochastic flows where the issue of unbounded domain arises in (\ref{peig}) and is expensive for traditional Eulerian methods. For time complexity, let $N_x$ denote the number of grid points in each dimension of Eulerian methods and $N$ represent the Monte Carlo number in IPM. With the same iteration number $n$ and time step $\Delta t$, the time complexity of traditional Eulerian methods (e.g., Semi-Lagrangian + Crank-Nicksion method in \cite{Nolen2008}) is $O((N_x)^d)$, while the IPM has a time complexity of $O(N d)$. Comparing the two approaches, IPM is {\it self-adaptive and free from domain truncation}. For particle methods on related  effective diffusivity problems in periodic and stochastic flows, see \cite{PavliotisStuart:05,PavliotisStuart:07, PSZ_2009,
WangXinZhang:18,SharpMMS_21,IPM_2020,TDChaoticFlow_22}.

The rest of the paper is organized as follows. In Section 2, we review the background of the random Fourier method and present an IPM for computing $c^*$ in random flows. In Section 3, we provide the error estimate of the approximation to the solution operator and convergence rate. In Section 4, we validate IPM with a semi-Lagrangian method on 2D examples and carry out numerical experiments to show its accuracy in spatially periodic flows in agreement with known theoretical results. Meanwhile, experiments on the time complexity of the semi-Lagrangian method in a 3D periodic flow have been conducted. Then we investigate the influence of random perturbations on $c^*$ and show the distribution of particles with different magnitudes of perturbation. Most notably the randomly perturbed ABC flows whose vortical structures are increasingly smeared as the random perturbations are strengthened, and $c^*$ is reduced accordingly. We discuss a shift normalization of IPM on $\mathbb{R}^d$ to realize its convergence to an invariant measure. The concluding remarks are in section 5, with most of the proofs left in the Appendix.  

\section{Efficient IPM for KPP front speeds}\label{sec:LagrangianMethods}
\noindent We develop the Lagrangian IPM for computing $c^*$ in random flows. 

\subsection{Random fields realization}\label{sec:RanFieRealization}
\noindent
Several earlier works on calculating the KPP front speeds of random fields required certain compactness of the domain or regularity of the random fields. In \cite{Nolen2008}, an infinite channel with a finite size cross-section and a random shear flow imposed on the cross-section is considered. In \cite{NolenJack2009}, the author assumes the random velocity field is uniformly bounded in the spatial domain $\mathbb{R}^d$. Considering the compactness of the functional space, we would like to consider a kind of random velocity field with an equispaced discrete spectrum in this paper. For a random velocity field, $v(x,\omega)$, with the equispaced discrete spectrum, it can be represented as a sum of uncorrelated harmonic oscillations, i.e.  
\begin{equation}
    v(x,\omega) = \sum_{j = -\infty}^{\infty} e^{-2\pi i j \Delta k x} E^{1/2}(|j|\Delta k) \Delta\Tilde{W}_j,
    \label{DefofVdisspec}
\end{equation}
where $E(k)$ is a given spectral density, $\omega$ is an element of the probability space describing all possible environments, and the complex random variables $\Delta \tilde{W}_j$ are defined in terms of the complex white noise process: 
\begin{equation}
    \Delta \tilde{W}_j = \int_{j - \Delta k /2}^{j + \Delta k /2} d \tilde{W}(k),
\end{equation}
where $d\Tilde{W}(k)$ represents a complex Gaussian white noise. Of course, stationary random processes with a discrete spectrum are far from the only kind of stationary processes. However, it turns out that any stationary process can be obtained as the limit of a sequence of processes with discrete spectra \cite{Yaglom1987}. 

To approximate the random velocity field $v(x,\omega)$, we use random Fourier methods \cite{Majda:99} for producing a random velocity field $\tilde{v}(x,\omega)$. For ease of implementation, we would require $\tilde{v}(x,\omega)$ can be described by combinations between a finite number of random variables.  The random Fourier method can be verified by computing the average of mean-square displacement among $N_r$ independent realizations $\{\tilde{v}^j\}_{j=1}^{N_r}$ of the approximate velocity fields which are generated by successive calls to the random number generator (different random seeds). By Riemann sum approximation over a finite partition of $2N_F+1$ equal width intervals of width $\Delta k$, we define $k_{max} = (N_F + \frac{1}{2}) \Delta k$. This partition extends over a finite interval $[-k_{max}, k_{max}]$. We can get the following Riemann-Stieltjes sum for the approximating velocity field:
\begin{equation}
    \tilde{v}(x,\omega) = \sum_{j=-N_F}^{N_F} e^{-2\pi i j \Delta k x} E^{1/2}(|j|\Delta k) \Delta \tilde{W}_j.
    \label{vapp}
\end{equation}
where $\{\Delta \Tilde{W}_j\}_{j=1}^{N_F}$ are statistically independent complex Gaussian variables and $\Delta \Tilde{W}_0$ is a real Gaussian variable with mean zero and variance $\Delta k$ independent from all other variables. Therefore we can transform the equation (\ref{vapp}) into:
\begin{equation}
    \tilde{v}(x,\omega) = E^{1/2}(0) \Delta \Tilde{W}_0 + 2 \textbf{Re} \sum_{j=1}^{N_F} e^{-2\pi i j \Delta k x} E^{1/2}(|j|\Delta k) \Delta \tilde{W}_j ,
\end{equation}
which shares the same distribution. Here \textbf{Re} denotes the real part of the following expression. By expanding the complex random variable $\Delta \Tilde{W}_j$ into real and imaginary parts, we obtain a concise expression of the approximate velocity field as a discrete sum of real Fourier modes. The approximate velocity field is:
\begin{equation}
    \tilde{v}(x,\omega) = \sum_{j = 0}^{N_F} \sqrt{2E(k_j)\Delta k_j} [\zeta_j \cos(2\pi k_j x) + \eta_j \sin(2\pi k_j x)],
    \label{VfourGen}
\end{equation}
where $k_j = j\Delta k$ denote the locations of equally spaced grid points and $\Delta k_j = \Delta k$ for $j = 1,2,..., N_F$ and $\Delta k_0 = \frac{1}{2} \Delta k$. The $\{\zeta_j\}_{j=0}^{N_F}$ and $\{\eta_j\}_{j=0}^{N_F}$ are independent Gaussian variables with mean zero and variance one. 

The exact correlation function of random field $v(x,\omega)$ with given $E(k)$  is:
\begin{equation}
    R(x) = \langle v(x_0,\omega) v(x_0 + x,\omega) \rangle = 2 \sum_{j=0}^{\infty} E(k_j) \Delta k_j \cos(2\pi k_j x) .
    \label{TrueCorrFct}
\end{equation}

As $\Delta k \to 0$, \eqref{TrueCorrFct} will be an approximation to the integral $ \int_{0}^{\infty} \cos (2\pi kx) E(k) dk$.
Meanwhile, the correlation function of the Fourier generated random field $\tilde{v}(x,\omega)$ is 
\begin{align}
    \tilde{R}(x)  = & \sum_{j=0}^{N_F} (\sqrt{2E(k_j)\Delta k_j})^2 [\cos(2\pi k_j x_0) \cos(2\pi k_j (x + x_0))\ + \notag \\
    & \sin(2\pi k_j x_0) \sin(2\pi k_j (x + x_0))] = \sum_{j = 0}^{N_F} 2E(k_j)\Delta k_j \cos (2\pi k_j x).
    \label{AppCorrFct}
\end{align}

By comparing these two correlation functions (\ref{TrueCorrFct}) and (\ref{AppCorrFct}), we find that $\tilde{R}(x)$ is just the finite truncation of $R(x)$. As the number of Fourier modes $N_F$ gets larger, this approximation will become more accurate. In our numerical experiments, we select these two parameters according to the correlation length of energy spectral density. 

\subsection{Computing $\mu$ by FK semigroup}\label{sec:FKformula}
\noindent
Let us consider  $\bm v  
= \bm v(\bm x,\omega)$, 
%the linearized corrector equation (\ref{KPPOriginalEquation}), where the 
a mean zero and divergence-free flow field,  %In the following analysis and numerical %examples, 
%$\bm v$ 
%= \bm v(\bm x,t, \omega) 
%= \bm v(\bm x,\omega)$ is time-independent and 
% adjust
to be constructed by a sum of a deterministic space term with period $1$ and a stochastic term.
%, i.e. $\bm v = \tilde{\bm v} + \bm v_{r}$. 
The stochastic term is a mean zero, stationary, and ergodic random field defined by \eqref{DefofVdisspec}. We assume that the realizations of $\bm v(\bm x,\omega)$ are almost surely divergence-free. To compute the KPP front speed $c^*(\bm z)$ along direction $\bm z$, we denote $q$ as the solution to the linearized corrector equation with $(\bm z, \bm e) > 0$:
\begin{equation}
    q_t = \mathcal{A}\, q \overset{\text{def}}{=} \kappa \Delta_{\bm x} q + (-2\kappa \lambda \bm e + \bm v) \cdot \nabla_{\bm x} q + (\kappa \lambda^2 - \lambda \bm v \cdot \bm e + f'(0))q  ,
    \label{linearCorrEqu}
\end{equation}
with compactly supported non-negative initial condition $q(\bm x, 0)\not \equiv 0$. Recall that the minimal front speed in $\bm z$ is: $c^* (\bm z) = \inf_{(\bm z, \lambda\bm e) > 0} \mu(\lambda \bm e) / (\bm z, \lambda\bm e)$, where  
\begin{equation}
    \mu(\lambda \bm e) = \lim_{t \to +\infty} \frac{1}{t} \ln \int_{\mathbb{R}^d} q(\bm x, t) d\bm x,
    \label{muComp}
\end{equation}  
also known as the principal Lyapunov exponent of  (\ref{linearCorrEqu}). We define $c(\bm x, \omega) = (\kappa \lambda^2 - \lambda \bm v(\bm x,\omega) \cdot \bm e + f'(0) )$.  To design the IPM, we split the operator $\mathcal{A}$ into $\mathcal{A} = \mathcal{L} + \mathcal{C}$, where $\mathcal{L}$ and $\mathcal{C}$ are defined by:
\begin{equation}
    \mathcal{L}q \overset{\text{def}}{=} \kappa \Delta_{\bm x}q + (-2\kappa \lambda \bm e + \bm v) \cdot \nabla_{\bm x}q,
    \label{DefL}
\end{equation}
\begin{equation}
    \mathcal{C}q \overset{\text{def}}{=}  (\kappa \lambda^2 - \lambda \bm v \cdot \bm e + f'(0) ) q.
    \label{DefC}
\end{equation}

Then for each random realization, to approximate the operator $\mathcal{L}$, we define the following SDE system:
\begin{equation}
    d X_s^{t_2, t_1, \bm x} = \bm b(X_s^{t_2, t_1, \bm x}) \, ds + \sqrt{2 \kappa}\, d\, \textbf{w}(s),\quad X_{t_1}^{t_2, t_1, \bm x} = \bm x, \quad t_2 \geq s \geq t_1,
    \label{SDEsys}
\end{equation}
where $\bm b = -2\kappa \lambda \bm e + \bm v$ is the drift term, 
$\textbf{w}(t)$ is a $d$-dimensional Brownian motion independent of $\omega$. The FK formula is:
\begin{equation}
    \mu(\lambda \bm e) = \lim_{t \to +\infty} \frac{1}{t} \ln \mathbb{E} \Big(\exp \big(\int_0^t c(X_{s}^{t,0,\bm x}) ds\big) \Big).
    \label{muCompFeyKac}
\end{equation}
Here the expectation is over the probability space induced by the $d$-dimensional Brownian motion $\textbf{w}(t)$. For a given $\bm e$ and its corresponding operator $\mathcal{A}$ in the following analysis, we denote $\mu(\lambda) = \mu(\lambda \bm e)$ for simplicity. Only during the computation of front speed $c^*$, we search $\bm e$ to minimize $\mu(\lambda \bm e) / (\bm z, \lambda \bm e)$, subject to $(\bm z, \bm e) > 0$.

Comparing the computation in formulas (\ref{muComp}) and (\ref{muCompFeyKac}), we see that the FK formula provides an alternative strategy to design (Lagrangian) particle methods to compute $\mu(\lambda)$ and overcome the drawbacks in the traditional Eulerian methods, especially in the cases when the magnitude of $\bm v$ is large or the dimensional $d$ is high (e.g. $d = 3$).  Directly applying the FK formula (\ref{muCompFeyKac}) is unstable as the sample paths visiting the maximal points of the potential $c$ make the main contribution to the expectation $\mathbb{E}$ in (\ref{muCompFeyKac}). This may lead to inaccurate or even divergent results. A more accurate strategy for the computation of $\mu(\lambda)$ is to study the convergence of the FK group associated with the above SDEs (\ref{SDEsys}) and the potential $c$. From \eqref{DefofVdisspec}, $\bm v(\bm x, \omega)$ will have a period $1/\Delta k$ in each spatial dimension if we set $1/\Delta k \in \mathbb{N}^+$. Specifically, let $\mathcal{D} = [0, \frac{1}{\Delta k}]^d$ denote the $d$-dimensional torus with the periodic boundary condition, $\mathcal{P}(\mathcal{D})$ denote the set of probability measures over $\mathcal{D}$ and $S = \mathcal{C}^{\infty} (\mathcal{D})$. Unless otherwise specified, the norms taken in the rest of this paper are all on the domain $\mathcal{D}$. Then let $Q_{\cdot,\cdot}$ denote the evolution operator which is associated with the process $X_{s}^{t_2, t_1, \bm x}$ in the SDE system i.e. for any measure $m \in P(\mathcal{D})$, function $\phi \in S$, we have 
\begin{equation}
    m(Q_{t_2, t_1}\phi) = \mathbb{E}_{\bm x \sim m} \big(\phi(X_{t_2}^{t_2, t_1, \bm x}) \big).
\end{equation}

Similarly, we define the associated weighted counterpart as
\begin{equation}
    m(Q_{t_2, t_1}^c \phi) = \mathbb{E}_{\bm x \sim m} \big(\phi(X_{t_2}^{t_2, t_1, \bm x}) \exp(\int_{t_1}^{t_2} c(X_s^{t_2, t_1, \bm x}) ds) \big).
    \label{WeightCouPart}
\end{equation}

The $\mathcal{L}(t_1)$ and $\mathcal{A}(t_1)$ are the infinitesimal generators of two evolution operator $Q_{t_2, t_1}$ and $Q_{t_2, t_1}^c$. By the definition of $Q_{t_2,t_1}^c$ and as in \cite{ferre2019error,cerou2011nonasymptotic}, we define the FK operator $\Phi_{t_2,t_1}^c$ by:
\begin{equation}
    \Phi_{t_2, t_1}^c(m)(\phi) = \frac{m(Q_{t_2,t_1}^c \phi)}{ m(Q_{t_2, t_1}^c 1)} = \frac{\mathbb{E}_{\bm x \sim m} \big(\phi(X_{t_2}^{t_2, t_1, \bm x}) \exp(\int_{t_1}^{t_2} c(X_s^{t_2, t_1, \bm x}) ds ) \big)}{\mathbb{E}_{\bm x \sim m} \big(\exp(\int_{t_1}^{t_2} c(X_s^{t_2, t_1, \bm x}) ds ) \big)}.
    \label{FeyKacSemiOperator}
\end{equation}

Due to steady velocity fields here, we denote $Q_{t,0}^c = Q_{t}^c$ and $\Phi_{t,0}^c = \Phi_{t}^c$. Let $m_c$ denote the invariant measure of $\Phi_{t}^c$ which satisfies $\Phi_{t}^c(m_c) = m_c$. It is straightforward to verify the following property of the FK semigroup $\Phi_{t}^c$ \cite{Junlong2022}.

\begin{pro}
For any $m \in \mathcal{P}(\mathcal{D})$ and $\phi \in S$, there exists $C_{\mathcal{A}} > 0$ such that 
\begin{equation}
\Big|\Phi^{c}_{t}(m)(\phi)-\int_{\mathcal{D}}\phi d m_{c}\Big|\leq C_{\mathcal{A}}||\phi||\exp(-\delta_{c}t), 
\end{equation}  
where $\delta_{c}=\inf\{\mu(\lambda)-\Re(z):z\in \sigma(\mathcal{A}) \setminus\{\mu(\lambda)\}\}>0$ is the spectral gap of the operator $\mathcal{A}$, $m_{c}$ is the invariant measure of $\Phi^{c}_{t}$.
\end{pro}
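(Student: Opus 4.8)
The plan is to reduce the statement to the spectral decomposition of the elliptic generator $\mathcal{A}$ on the compact torus $\mathcal{D}$ and then run an elementary estimate on the ratio defining $\Phi^{c}_{t}$. First I would record the spectral picture: since $\mathcal{A}$ is a uniformly elliptic second order operator with smooth coefficients on the compact manifold $\mathcal{D}$, the semigroup generated by $\mathcal{A}$ is analytic and compact for $t>0$; by the Krein--Rutman theorem its principal eigenvalue $\mu(\lambda)$ is real, algebraically simple and isolated, with a strictly positive eigenfunction $\psi$, and likewise the adjoint problem has a strictly positive principal eigenfunction $\psi^{*}$, which we normalize by $\int_{\mathcal{D}}\psi\,\psi^{*}\,d\bm x=1$. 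By the Feynman--Kac identity the weighted evolution operator from \eqref{WeightCouPart} satisfies $Q^{c}_{t}\phi=e^{t\mathcal{A}}\phi$ (the exponential weight $\exp(\int_{0}^{t}c(X_{s})ds)$ exactly realizes the zeroth order term of $\mathcal{A}$), so the spectral mapping theorem for analytic semigroups gives the rank one splitting
\begin{equation}
    e^{t\mathcal{A}}\phi=e^{\mu(\lambda)t}\Big(\int_{\mathcal{D}}\psi^{*}\phi\,d\bm x\Big)\psi+R_{t}\phi,\qquad \|R_{t}\phi\|\le C\,e^{(\mu(\lambda)-\delta_{c})t}\,\|\phi\|,
    \label{eq:prop-spec-split}
\end{equation}
with $\delta_{c}=\inf\{\mu(\lambda)-\Re(z):z\in\sigma(\mathcal{A})\setminus\{\mu(\lambda)\}\}>0$ exactly the spectral gap in the statement, and where I use that $\|\cdot\|$ controls the supremum norm on $\mathcal{D}$.

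Next I would substitute \eqref{eq:prop-spec-split} into \eqref{FeyKacSemiOperator}. Abbreviating $a=\int_{\mathcal{D}}\psi^{*}\phi\,d\bm x$ and $b=\int_{\mathcal{D}}\psi^{*}\,d\bm x>0$, and using that any probability measure $m$ on $\mathcal{D}$ satisfies $m(\psi)\ge\min_{\mathcal{D}}\psi=:c_{0}>0$ (because $\psi$ is continuous and strictly positive on the compact set $\mathcal{D}$), one obtains after multiplying numerator and denominator by $e^{-\mu(\lambda)t}$
\begin{equation}
    \Phi^{c}_{t}(m)(\phi)=\frac{a\,m(\psi)+e^{-\mu(\lambda)t}\,m(R_{t}\phi)}{b\,m(\psi)+e^{-\mu(\lambda)t}\,m(R_{t}1)}.
    \label{eq:prop-ratio}
\end{equation}
Letting $t\to\infty$ the remainder terms vanish and the limit is $a/b$, which identifies the invariant measure as $dm_{c}=\psi^{*}\,d\bm x\big/\int_{\mathcal{D}}\psi^{*}\,d\bm x$ (so $m_{c}$ exists, is unique, and $\Phi^{c}_{t}(m_{c})=m_{c}$, consistent with the Lyapunov exponent in \eqref{muCompFeyKac}).

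Then I would estimate the error. Subtracting $a/b$ from \eqref{eq:prop-ratio} and putting the two fractions over the common denominator $b\big(b\,m(\psi)+e^{-\mu(\lambda)t}\,m(R_{t}1)\big)$, the numerator collapses to $b\,e^{-\mu(\lambda)t}\,m(R_{t}\phi)-a\,e^{-\mu(\lambda)t}\,m(R_{t}1)$, which by the remainder bound in \eqref{eq:prop-spec-split} together with $|a|\le\|\psi^{*}\|_{L^{1}(\mathcal{D})}\|\phi\|_{\infty}$ is bounded by $C\|\phi\|e^{-\delta_{c}t}$. Since $e^{-\mu(\lambda)t}\,m(R_{t}1)\to0$, for $t\ge T_{0}$ large the denominator is at least $\tfrac12 b^{2}c_{0}>0$, giving
\begin{equation}
    \Big|\Phi^{c}_{t}(m)(\phi)-\int_{\mathcal{D}}\phi\,dm_{c}\Big|\le C_{\mathcal{A}}\,\|\phi\|\exp(-\delta_{c}t),\qquad t\ge T_{0}.
\end{equation}
For $t\in[0,T_{0}]$ the same bound holds after enlarging $C_{\mathcal{A}}$, because $\Phi^{c}_{t}(m)(\phi)$ is a positive weighted average of values of $\phi$, so $|\Phi^{c}_{t}(m)(\phi)|\le\|\phi\|_{\infty}$ and $\big|\int_{\mathcal{D}}\phi\,dm_{c}\big|\le\|\phi\|_{\infty}$, while $\exp(-\delta_{c}t)\ge\exp(-\delta_{c}T_{0})$ on that interval.

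The only genuinely delicate point is the implication ``spectral gap of $\mathcal{A}$'' $\Rightarrow$ ``exponential decay of $R_{t}$ at rate $\delta_{c}$'' used in \eqref{eq:prop-spec-split}: this needs the semigroup $e^{t\mathcal{A}}$ to be eventually norm continuous (here, analytic) so that the spectral mapping theorem applies and no residual part of $\sigma(\mathcal{A})$ can spoil the rate, and it needs algebraic simplicity of $\mu(\lambda)$ so the leading part is genuinely rank one. I would secure both by invoking elliptic regularity and analytic semigroup theory on the compact torus $\mathcal{D}$ (together with Krein--Rutman positivity); everything downstream is the bookkeeping in \eqref{eq:prop-ratio}, which is why the statement is flagged as straightforward.
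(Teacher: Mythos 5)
Your proof is correct, and the approach is the natural one: the paper does not present its own argument for this proposition, instead labelling it ``straightforward to verify'' and pointing to \cite{Junlong2022}, so there is no in-paper proof to compare against line by line. The spectral decomposition you use is exactly what produces the stated rate $\delta_c$: the Doeblin-type minorization machinery that the paper uses for the discrete operators (Propositions 3.3--3.4) would give exponential decay with a rate of the form $-\log(1-\theta/\vartheta)/T$, not $\delta_c$, so your rank-one splitting via Krein--Rutman plus the spectral mapping theorem for the analytic semigroup $e^{t\mathcal{A}}$ is the right route. The two points you flag as delicate --- eventual norm continuity so the spectral mapping theorem applies, and algebraic simplicity of $\mu(\lambda)$ so the leading term is genuinely rank one --- are indeed the load-bearing hypotheses, and both are justified on the compact torus $\mathcal{D}$ by elliptic regularity and Krein--Rutman. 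One caveat worth keeping explicit in a final write-up: the remainder bound $\|R_t\phi\|\le Ce^{(\mu(\lambda)-\delta_c)t}\|\phi\|$ must ultimately control $\sup_{\bm x}|R_t\phi(\bm x)|$, since $m$ can be a Dirac mass; you note this parenthetically, but it deserves a sentence invoking ultracontractivity/parabolic smoothing ($e^{t\mathcal{A}}:L^2\to L^\infty$ for $t>0$) so that the $L^2$-spectral estimate upgrades to a uniform one. The uniformity of $C_{\mathcal{A}}$ over $m\in\mathcal{P}(\mathcal{D})$ is handled correctly by $m(\psi)\ge\min_{\mathcal{D}}\psi>0$.
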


The exponential decay property of the FK operator $\Phi_{t}^c$ in the above proposition allows us to establish the existence of invariant measure $m_c$ of $\Phi_{t}^c$ from a simple initial measure $m$ (uniform or Gaussian) after long enough evolution. Since $m_c$ is the invariant measure of $\Phi_{t}^c$, for any $\phi \in S$, we have
\begin{equation}
    \int_{\mathcal{D}} \phi dm_c = (\int_{\mathcal{D}} Q^c_t 1 dm_c)^{-1} \int_{\mathcal{D}} Q^c_t \phi dm_c.
\end{equation}
This provides us a practical approach to compute the principal eigenvalue of $Q^c_t$ by evaluating $\int_{\mathcal{D}} Q^c_t 1 d m_c$.

\subsection{Numerical discretization and resampling techniques}\label{sec:NumApprResampling}
\noindent
For the time interval $[0, M T]$, let $M$ denote the iteration number, $H$ denote the number of time discretization intervals in each iteration, and let $\Delta t = T / H$. At time $t_i = i \Delta t$ in one iteration, we can define the transform of random variables by the following formula:
\begin{equation}
    {\bf Y}_i = {\bf X}_i + 
\textbf{b}({\bf X}_i)\Delta t+  \sqrt{2\kappa \Delta t} \bm\omega_i, 
\end{equation}
where $\{\omega_i\}$ are i.i.d d-dimensional standard Gaussian random variables which are independent of $X_i$. This transform represents one step Euler-Maruyama discretization scheme for process ${\bf X}_s^{T,0,\bm x}$ at time $s = T - i\Delta t$ which follows the SDE system (\ref{SDEsys}). Then the one-step evolution operator 
$Q^{\Delta t}_i$ will be defined as follows:
\begin{equation}
    Q^{\Delta t}_i \phi(\bm x) = \mathbb{E}(\phi({\bf Y_i})| {{\bf X}_i = \bm x}),\quad \phi \in S .
\end{equation}

The evolution operator $Q^{\Delta t}_i$ describes how the values of a given function evolve in $L_2$ sense over one time step $\Delta t$. One can easily verify that  
\begin{equation} \label{MarkovProcessApproximation}
\big|\big|Q^{\Delta t}_{i} - e^{\Delta t \mathcal{L}}\big|\big|_{L^2} \le C_Q (\Delta t)^2,
\end{equation}
where the operator $\mathcal{L}$ refers to \eqref{DefL} and $C_Q$ is a positive constant \cite{MilsteinTransition}. Specially, when $\bm b = 0$,  $Q^{\Delta t}_{t_i} = e^{\Delta t \mathcal{L}}$ for all $i$. 
In addition, we can define the approximation operator for $Q^c_t$ in \eqref{WeightCouPart}. For instance, if we choose the right-point rectangular rule, we obtain  that for any $m \in \mathcal{P}(\mathcal{D})$ and $\phi \in S$
\begin{equation}
m(Q_i^{\Delta t} e^{\Delta t \mathcal{C}}\phi) = \mathbb{E}\Big(\phi({\bf Y}_{i})\exp\big(
c({\bf Y}_{i})\Delta t\big)\big|{\bf X}_{i} \sim m \Big), \quad i=1,2,...,H.
\label{DiscreteOperatorPct}
\end{equation}

The time discretization for the FK semigroup \eqref{FeyKacSemiOperator} reads:
\begin{equation}
\Phi_{i}^{\mathcal{C},\Delta t} (m)(\phi) = \frac{ m(Q_i^{\Delta t} e^{\Delta t \mathcal{C}}\phi)}{m(Q_i^{\Delta t} e^{\Delta t \mathcal{C}}1)},
\quad i=1,2,...,H.
\label{DiscreteFeynmanKacsemigroup} 
\end{equation}

It is not practical to compute the closed-form solution of the evolution of the probability measure in \eqref{DiscreteFeynmanKacsemigroup}. So we try to approximate the evolution of the probability measure in another way by an $N$-interacting particle system ($N$-IPS) \cite{del2000branching}. Now we denote $\mathcal{K}^{\Delta t,i} = Q_i^{\Delta t} e^{\Delta t \mathcal{C}(t_i)}$ and define $\mathcal{K}^{\Delta t} = \mathcal{K}^{\Delta t,H} \mathcal{K}^{\Delta t,H-1}\cdots \mathcal{K}^{\Delta t,1}$.
\begin{align}
\Phi^{\mathcal{K}^{\Delta t,i}} (m)(\phi) = \frac{ m(\mathcal{K}^{\Delta t,i}\phi)}{m(\mathcal{K}^{\Delta t,i}1)},
\quad i=1,2,...,H, 
\label{operatorPhiKiDeltat}
\end{align}
is the FK semigroup associated with the operator $\mathcal{K}^{\Delta t,i}$. 

\begin{pro} [Lemma 3.6 in \cite{Junlong2022}] For any operators $\mathcal{A}$, $\mathcal{B}$ in $\mathcal{L}(L^2(\mathcal{D}))$, $\Phi^{\mathcal{A}\mathcal{B}} = \Phi^{\mathcal{B}} \Phi^{\mathcal{A}}$. 
\label{Prop2.2}
\end{pro}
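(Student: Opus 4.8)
The plan is to unwind both sides of the claimed identity directly from the definition of the Feynman--Kac semigroup map, so that the statement reduces to elementary bookkeeping once the order of composition is tracked carefully. Recall that for a bounded operator $\mathcal{A}$ on $L^2(\mathcal{D})$ of the type for which the normalization is meaningful (e.g.\ positivity preserving, so that $m(\mathcal{A}1)\neq 0$ for every relevant $m$), the associated map is $\Phi^{\mathcal{A}}(m)(\phi)=m(\mathcal{A}\phi)/m(\mathcal{A}1)$ for $\phi\in S$, exactly as in \eqref{operatorPhiKiDeltat}. First I would fix an arbitrary $m\in\mathcal{P}(\mathcal{D})$ and $\phi\in S$ and record the left-hand side: by definition $\Phi^{\mathcal{A}\mathcal{B}}(m)(\phi)=m(\mathcal{A}\mathcal{B}\phi)/m(\mathcal{A}\mathcal{B}1)$, where $\mathcal{A}\mathcal{B}$ denotes operator composition and linearity guarantees $\mathcal{A}\mathcal{B}\phi=\mathcal{A}(\mathcal{B}\phi)$ and $\mathcal{A}\mathcal{B}1=\mathcal{A}(\mathcal{B}1)$ as elements of $L^2(\mathcal{D})$.

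Next I would expand the right-hand side one layer at a time. Writing $m'=\Phi^{\mathcal{A}}(m)$, we have $m'(\psi)=m(\mathcal{A}\psi)/m(\mathcal{A}1)$ for every admissible test function $\psi$; applying this with $\psi=\mathcal{B}\phi$ and with $\psi=\mathcal{B}1$ yields $m'(\mathcal{B}\phi)=m(\mathcal{A}(\mathcal{B}\phi))/m(\mathcal{A}1)$ and $m'(\mathcal{B}1)=m(\mathcal{A}(\mathcal{B}1))/m(\mathcal{A}1)$. Consequently
\begin{equation}
(\Phi^{\mathcal{B}}\Phi^{\mathcal{A}})(m)(\phi)=\Phi^{\mathcal{B}}(m')(\phi)=\frac{m'(\mathcal{B}\phi)}{m'(\mathcal{B}1)}=\frac{m(\mathcal{A}(\mathcal{B}\phi))/m(\mathcal{A}1)}{m(\mathcal{A}(\mathcal{B}1))/m(\mathcal{A}1)}=\frac{m(\mathcal{A}\mathcal{B}\phi)}{m(\mathcal{A}\mathcal{B}1)},
\end{equation}
because the common nonzero scalar factor $m(\mathcal{A}1)$ cancels from numerator and denominator. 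The final expression coincides with the left-hand side computed above, and since $m\in\mathcal{P}(\mathcal{D})$ and $\phi\in S$ were arbitrary, the operator identity $\Phi^{\mathcal{A}\mathcal{B}}=\Phi^{\mathcal{B}}\Phi^{\mathcal{A}}$ follows.

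The only point that requires care --- and the place where I expect a short justifying remark rather than a computation --- is the well-posedness of the normalizations underlying the composition: one needs $m(\mathcal{A}1)\neq 0$ and $m(\mathcal{A}\mathcal{B}1)\neq 0$ for the measures $m$ in question, and one needs $\Phi^{\mathcal{A}}(m)$ to lie in the domain on which $\Phi^{\mathcal{B}}$ acts, so that the composite $\Phi^{\mathcal{B}}\Phi^{\mathcal{A}}$ is defined in the first place. For the operators to which this lemma is actually applied --- finite products of the Euler--Maruyama operators $Q_i^{\Delta t}$ and the multiplication semigroups $e^{\Delta t\mathcal{C}}$, which are positivity preserving and send $1$ to strictly positive bounded functions --- these requirements hold automatically, so the cancellation in the displayed line is legitimate and the argument is complete.
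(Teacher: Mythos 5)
Your proof is correct and is the natural direct-computation verification of the identity; the paper itself does not reprove this lemma but defers to Lemma 3.6 of \cite{Junlong2022}, and the computation you give (unwinding both sides from $\Phi^{\mathcal{A}}(m)(\phi)=m(\mathcal{A}\phi)/m(\mathcal{A}1)$ and cancelling the common scalar $m(\mathcal{A}1)$) is exactly that standard verification. Your closing caveat about positivity preservation and nonvanishing denominators is also the right implicit hypothesis under which the proposition is used here.
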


%According to Lemma in \cite{Junlong2022}, for any operators $\mathcal{A}$, $\mathcal{B}$ in $\mathcal{L}(L^2(\mathcal{D}))$, $\Phi^{\mathcal{A}\mathcal{B}} = \Phi^{\mathcal{B}} \Phi^{\mathcal{A}}$. 
According to Proposition \ref{Prop2.2}, we have that  
\begin{equation}
		\Phi^{\mathcal{K}^{\Delta t}} = \prod_{i=0}^{H-1} \Phi^{\mathcal{K}^{\Delta t,H-i}}=\Phi^{\mathcal{K}^{\Delta t,1}}\Phi^{\mathcal{K}^{\Delta t,2}}\cdots\Phi^{\mathcal{K}^{\Delta t,H}}.
		\label{operatorPhiKDeltat}
\end{equation}

Suppose a Markov process $({\Theta}, (\mathcal{F}_M)_{M\ge 0}, (\bm\xi^M)_{M\ge 0}, \mathbb{P})$ 
is defined in the product space $(\mathcal{D})^N$. By an $N$-IPS, we can approximate any initial probability measure $m_0 = m$ as:
\begin{equation}
	P(\bm\xi^0 \in d\bm z) = \prod_{l=1}^N m_0(dz^l).
	\label{initialNparticle}
\end{equation}

Then according to \cite{del2000branching} and denote $\bm \xi^{M-1} = \bm x$, we evolve the $N$-IPS according to 
\begin{equation}\label{evolveNparticle}
		P(\bm\xi^M \in d\bm z|\bm\xi^{M-1}) = \prod_{l=1}^N \Phi^{\mathcal{K}^{\Delta t}}(\frac{\sum_{i=1}^{N}\delta_{x^i}}{N} )(dz^l)=\prod_{l=1}^N (\prod_{j=0}^{H-1} \Phi^{\mathcal{K}^{\Delta t,H-j}})(\frac{\sum_{i=1}^{N}\delta_{x^i}}{N}) (dz^l),
\end{equation}
where $N$ denotes the number of particles, vector $\bm x=(x^1,...,x^N)^T$, and $M$ denotes the iteration number of the probability measure's evolution by the FK semigroup defined in \eqref{DiscreteFeynmanKacsemigroup}. Following equation \eqref{evolveNparticle}, we compute each iteration of the evolution of $N$-IPS step by step from $\bm \xi^0$. And each iteration step will be divided into $H$ small steps and we denote $\bm \xi_i^M = (\xi_i^{1,M},...,\xi_i^{N,M})$ as $\bm \xi^M$ after $i$th small steps ($i = 0,1,...,H$). During each iteration stage, we evolve the particles from $s=0$ to $s=T$ in the SDE \eqref{SDEsys} by the evolution operator $\{Q^{\Delta t}_{i}\}_{i=0}^{H-1}$ and then re-sample these particles according to their weights which are determined by the potential function. Specifically, each particle in the system is updated by:
\begin{equation}
	\widetilde{\xi}_{i}^{l,M-1} = \xi_{i}^{l,M-1} + \bm b(\xi_{i}^{l,M-1})\Delta t+  \sqrt{2\kappa \Delta t} \bm\omega_i^{l,M-1}, \quad l = 1,2,...,N, \label{NMSDEevolveparticles}
\end{equation}
where $\{\bm\omega_i^{l,M-1}\}$ are i.i.d standard $d$-dimensional Gaussian random variables. Then by following the FK semigroup, we compute each particle's weight $w_i^{l, M-1}$ by \eqref{MultinomialResampleWeight} and re-sample them according to the multinomial distribution of their weights and then obtain $\bm\xi_{i+1}^{M-1}$.
\begin{equation}
	{w}_{i}^{l,M-1}=\frac{\exp{\big(c(\widetilde\xi_{i}^{l,M-1})\Delta t\big)}}{\sum_{l=1}^{N}\exp{\big(c(\widetilde\xi_{i}^{l,M-1})\Delta t\big)}}, \quad  l=1,2,...,N.
	\label{MultinomialResampleWeight}
\end{equation} 	 

The whole evolution process of $N$-IPS from $0$ to $MT$ can be seen from Algorithm \ref{Algo::ForTimeindependent}. The restriction step in this algorithm is the positional restriction back for the particles that are about to escape from the range of the domain during the evolution process. For example, under the periodic velocity field condition in $\mathcal{D} = [0, \frac{1}{\Delta k}]^d$, we update the particle position in each dimension by $x \leftarrow mod(x, \frac{1}{\Delta k})$.

After obtaining the particles', $\bm\xi_{0}^{M}$, empirical distribution, we  compute $\mu^M_{\Delta t}(\lambda)$. At the $M$-th iteration stage, we first define the change of the mass as follows
\begin{equation}
	e_{i,M}^N = N^{-1}\sum_{l=1}^{N} \exp(c(\widetilde\xi_{i}^{l,M})\Delta t).
\end{equation}

Then, we compute the $\mu$ approximation:
\begin{equation}
	\mu^M_{\Delta t}(\lambda) = (H\Delta t)^{-1}\sum_{i=0}^{H-1} \log\Big( N^{-1}\sum_{l=1}^{N} \exp(c(\widetilde\xi_{i}^{l,M})\Delta t)\Big). 
	\label{NIPS-PrincipalEig}
\end{equation} 

We know that the particles', $\bm\xi_{0}^{M}$, empirical distribution  will weakly converge to the distribution $\Phi_n^{\mathcal{K}^{\Delta t}} (m_0)$ as $N$ approaches infinity. Therefore, we can use $\mu^M_{\Delta t}(\lambda)$ as an approximation to $\mu(\lambda)$. Equation \eqref{NIPS-PrincipalEig} is the discrete version of the following formula
\begin{equation}
    \mu^{M} (\lambda) = \frac{1}{T} \int_{(M-1)T}^{MT} \ln \mathbb{E} [\exp(c(X_s^{MT,0,\bm x}))] ds.
\end{equation}

Compared to using the FK formula \eqref{muCompFeyKac} directly, the first thing we need to realize is that the limits of \eqref{muCompFeyKac} and \eqref{NIPS-PrincipalEig} are equivalent due to the ergodicity. In \eqref{NIPS-PrincipalEig}, only the states during the final iteration, i.e., $M$-th iteration, are taken into account in the computation of $\mu^M_{\Delta t}$. In contrast, directly applying the FK formula \eqref{muCompFeyKac} requires accounting for the whole evolution process, i.e. from $t = 0$ to $t = MT$, which is numerically unstable in the long-time computation. 
The following algorithm \ref{Algo::ForTimeindependent} shows the entire evolution process. 

\begin{algorithm}[h]
\caption{Genetic algorithm for computing 
$\mu_{\Delta t}^{n}(\lambda)$}
\label{Algo::ForTimeindependent}
\begin{algorithmic}[1]  
\REQUIRE Input:
velocity field $\bm v(\bm x, \omega)$, potential $c(\bm x, \omega)$, initial probability measure $m_0$, Monte Carlo number $N$, iteration number $M$, time step $\Delta t$, generation life span $T = H\Delta t$ and $t_i = i\Delta t,\ 0\leq i \leq H$ (mutation number). 

\STATE Generate $N$ i.i.d. $m_0$-distributed random variables $\bm \xi_0^0 = (\xi_0^{1,0}, \cdots, \xi_0^{N,0})$ on $\mathcal{D}$ 
\FOR{$j = 1:M$}
\FOR{$i = 0:H-1$}
\STATE Generate $N$ i.i.d. standard Gaussian random variables. 

Compute $ \tilde{\bm \xi}_i^{j-1} = (\tilde\xi_i^{1, j-1} ,\cdots, \tilde\xi_i^{N,j-1})$ from $\bm \xi_i^{j-1}$ by \eqref{NMSDEevolveparticles}. 

Restrict $\bm \xi_i^{j-1}$ on $\mathcal{D}$ by imposing boundary condition (not needed on $\mathbb{R}^d$).
\STATE Compute point-wise fitness array $\bm S \overset{\text{def}}{=} (\exp(c^1 \Delta t)$ $,\cdots,  \exp(c^N\Delta t) )$, where $c^k = c(\tilde\xi_i^{k,j-1} , \omega),\ 1\leq k \leq N$. 
\STATE Define weight array $\bm w = (w^{1,j-1}_i, \cdots, w^{N,j-1}_i)\overset{\text{def}}{=} \bm S / sum(\bm S)$ via fitness, and compute mutation-wise mean particle fitness growth rate (PFGR): 

$\bm E_{j,i} \overset{\text{def}}{=} \log({\text mean}(\bm S)) / \Delta t$
\STATE Resample $\tilde{\bm \xi}_i^{j-1}$ by ${\bm w}$-parameterized  multinomial distribution to get $\bm \xi_{i+1}^{j-1}$.
\ENDFOR
\STATE Generation-wise mean PFGR: $\mu_{\Delta t}^j (\lambda) \overset{\text{def}}{=} H^{-1} \sum_{i=0}^{H-1} (\bm E_{j,i})$ and set $\bm \xi_{0}^{j} = \bm \xi_{H}^{j-1}$.
\ENDFOR
\ENSURE Output $\mu_{\Delta t}^M (\lambda)$ and approximately time-invariantly distributed $\bm \xi_0^M$.
\end{algorithmic}

\end{algorithm}

\section{Convergence analysis}\label{sec:ConvergenceAnalysis}
\noindent
In this section, we demonstrate the convergence of the IPM in computing the KPP front speed of the stationary and isotropic Gaussian random fields with discrete spectrum defined as \eqref{DefofVdisspec}. If we consider the system in $\mathbb{R}^d$ directly, our particle approach is easy to implement for computation. However, due to the lack of spectral gap conditions and bounded settings, convergence analysis is difficult to carry out directly. Since the random velocity field $\bm v(\bm x, \omega)$ in this setting always has period $1/\Delta k$, the motion of the particle can be considered in the torus $\mathcal{D} = [0, 1/\Delta k]^d$.  In \cite{Junlong2022}, the authors have given the convergence analysis of the cases with deterministic velocity fields. Due to each random realization, the realized velocity field will perform as a deterministic case, so we divide the analysis into four parts. The first part studies the approximation of the random field with a discrete spectrum by the random Fourier method. The second part studies the error estimate of an operator splitting method to approximate the evolution of parabolic operators. The third part studies the error estimate of the IPM in $\mu$ computation.
The fourth part studies the error estimate in computing $\mu$ via the random Fourier method. 
%For the cases with unbounded domain, our particle approach is easy to implement for computation, but due to the lack of spectral gap conditions and bounded settings, convergence analysis is difficult to carry out for the moment.

\subsection{Approximating Gaussian random field with a prescribed discrete spectrum by random Fourier method} \label{sec:AppofRanFieldusingRFM}
\noindent
In this subsection, we first represent the linearized corrector KPP equation \eqref{linearCorrEqu} into a non-autonomous parabolic equation in each iteration:
\begin{equation}
    q_t = \kappa\Delta_{\bm x} q + \bm b(\bm x,\omega)\cdot\nabla_{\bm x} q + c(\bm x,\omega)q, \quad \bm x=(x_1,...,x_d)^{T}\in \mathcal{D}, \quad t \in[0,T],
    \label{nonAutoParaEq}
\end{equation}
 with the initial condition $q(\bm x, 0) = q_0$, $\bm b(\bm x,\omega)=-2\kappa\lambda\bm e+\bm v$, $c(\bm x,\omega)=\kappa \lambda^2-\lambda\bm v\cdot \bm e + f^{\prime}(0)$.
 For Gaussian random field $\bm v(\bm x, \omega)$ defined by \eqref{DefofVdisspec}, it is fully characterized by its mean and covariance function.
 For each realization of the random field, our strategy is to generate an approximation $\tilde{\bm v}(\bm x)$ of the random velocity field, $\bm v(\bm x,\omega)$, with long-range correlation according to the correlation functions. Generated by the random Fourier method, $\tilde{\bm v}$ can be used with confidence to predict some important scaling laws and can simulate the spreading of particles accurately and efficiently. %And $\tilde{\bm v}(x)$ is composed of two parts, the same deterministic part in $\bm v(\bm x, \omega)$ and the approximated velocity field $\tilde{\bm v}$. 
 We use the simplest tracer statistic of the sort to measure the spreading of the particles. 
In the SDE system \eqref{SDEsys}
\begin{equation}
    d \bm X_s = \bm b(\bm X_s,\omega) ds + \sqrt{2\kappa} d\textbf{w} (s),\quad \bm X_0 = \bm x',
\end{equation}
where $\textbf{w}(s)$ is $d$-dimensional standard Brownian motion and its variance at time $s$ in each dimensional is $2\kappa s$. The stochastic drift term $\bm b(\bm X_s,\omega)$ is independent of $\textbf{w}(s)$, so their variances are additional. %Also we know that $\bm b(\bm X_s,\omega) = \bm b_{0}(\bm X_s) + \bm v_r(\bm X_s,\omega)$ and here $\bm b_{0}(\bm X_s)$ is the deterministic part. 
We denote the deterministic part of $\bm b(\bm X_s,\omega)$ as $\bm b_{0}(\bm X_s)$.
Since $\bm v$ is mean zero and stationary, we have $\langle \bm b(\bm X_s, \omega)\rangle = \bm b_{0}(\bm X_s)$ and the covariance function between two points $\bm X_s, \bm X_s + \bm x$ can be computed by: 
\begin{align}
    \langle (\bm b(\bm X_s,\omega) -\bm b_{0}(\bm X_s))\cdot(\bm b(\bm X_s + \bm x, \omega)-\bm b_{0}(\bm X_s + \bm x))\rangle 
    &= \langle \bm v (\bm X_s,\omega)\cdot \bm v(\bm X_s + \bm x,\omega)\rangle \notag\\ 
    &= R(\bm x).
\end{align}

%In our following analysis and experiments, the random perturbation is only a one-dimensional perturbation. For example, in 2D cases, $\bm v_r (\bm x,\omega) = (0, v_r(x,\omega))$. For a zero mean, Gaussian random velocity field $ v_r ( x,\omega)$, 
 Here $\langle\cdot\rangle$ denotes the average over probability space induced by $\omega$ and $R(\bm x)$ denotes the two-point correlation function of $\bm v$. Since $\bm v$ is isotropic, the correlation function $R(\bm x)$ admits the following representations. 
\begin{align}
    &R(\bm x) = R(x) =  \left \langle \bm v(x_0,\omega) \cdot \bm v(x_0 + x,\omega) \right \rangle = 2\sum_{j=0}^{+ \infty} \Delta k \cos(2\pi k_j r ) E(k_j) ,\ (\text{1-D}) \\
    &R(\bm x) = \left \langle \bm v(\bm x_0,\omega) \cdot \bm v(\bm x_0 +\bm x,\omega) \right \rangle = 2\sum_{j = 0}^{+ \infty} \Delta k J_0(2\pi k_j r) E(k_j) ,\ (\text{2-D}) \\
    &R(\bm x) = \left \langle \bm v(\bm x_0,\omega) \cdot \bm v(\bm x_0 +\bm x,\omega) \right \rangle = 2\sum_{j = 0}^{+ \infty} \Delta k \frac{\sin(2\pi k_j r)}{2\pi k_j r} E(k_j) ,\ (\text{3-D}) 
\end{align}
where $E(k)$ is a given energy spectral density, $k_j = j \Delta k$, $J_0(\cdot)$ is the Bessel function of the first kind, and $r = ||\bm x||_2$. We know that the mean square displacement is also completely determined by the correlation function $R(\bm x)$. So the important thing is that the generated velocity field's correlation function $\tilde{R}(\bm x)$ should be an approximation to the true random velocity field's correlation function $R(\bm x)$.
Following the random Fourier method mentioned in Section \ref{sec:RanFieRealization}, to make it accurate at low frequency, we would require $E(k)$ to satisfy the following properties. 
\begin{enumerate}
    \item[P1.] $E(k)$ is integrable and has finite moments. Since $E(k)$ is non-negative, this also implies $E(k) \in L^1(\mathbb{R})$.
    \item[P2.] $E(k)$ is finite at zero point, i.e. $E(0) < +\infty$.
    \item[P3.] $E(k), E'(k), E''(k)$ are all bounded a.e.
    \item[P4.] Let $g(y) = 2\sum_{j = y}^{+\infty} E(k_j) \Delta k$. For $\forall C_g > 0$, $\exists y_0 \in \mathbb{N}^+$, $\forall y > y_0\ \text{and}\ y \in \mathbb{N}^+, g(y) < C_g/ y^2$, i.e. $g(y)$ decay to $0$ at least as $O(y^{-2})$ as $y$ goes to infinity.   
\end{enumerate}

From properties $P1$ to $P4$, we can also imply that the correlation function $R(\bm x)$ is always finite. Then by comparing these two correlation functions (\ref{TrueCorrFct}) and (\ref{AppCorrFct}), the error between the two velocity fields' correlation function is the truncation error. In other words, the error is related to the number of Fourier mode $N_F$ (or the length of truncated frequency interval $k_{max} = (N_F+\frac{1}{2}) \Delta k$).
 For a specific $N_F$ and $\Delta k$, the truncation error $e_{R}$ in 1-D cases can be represented by:
 \begin{align}
     e_{R}(N_F) &= \big|2 \sum_{j = 0}^{\infty} \cos (2\pi k_j x) E(k_j) \Delta k - 2 \sum_{j =0}^{N_F} \cos (2\pi k_j x) E(k_j) \Delta k\big| \notag \\
     &= 2 \sum_{j = N_F + 1}^{\infty} \big|\cos (2\pi k_j x) E(k_j) \Delta k\big| < g(N_F + 1).
     \label{ErrEstRx}
 \end{align} 

%  And the discretization error $e_{dis}$ can be represented by:
% \begin{equation}
%     e_{dis} = \Big|2 \int_{0}^{k_{max}} \cos (2\pi kx) E(k) dk - \sum_{j = 0}^{N_F} 2E(k_j)\Delta k_j \cos (2\pi k_j x)\Big|.
%     \label{ErrEstedis}
% \end{equation}

% So that we can see that the error $e_R$ can be bounded by  
% \begin{equation}
%     e_R = |R(x) - R_{F}(x)| \leq e_{tru} + e_{dis} < e_{dis} + g(k_{max}).
% \end{equation}

\begin{lemma}
    The error, $e_R$, between the corresponding correlation functions of two velocity fields, $\bm v$ and $\tilde{\bm v}$, decays to zero at a convergence rate of $O((N_F)^{-2})$, i.e., for any $C_g > 0$,  $\exists N_{F_0} \in N^+, \forall N_F > N_{F_0}$ and $N_F \in N^+$, s.t.
    %For random velocity field $\bm v$ with the energy spectral density $E(k)$ which satisfies the properties $P1$ to $P4$, we can use the random Fourier method to get its approximation $\tilde{\bm v}$. Then we can get
    \begin{equation}
        e_R (N_F) %\leq e_{dis} + e_{tru} 
        \leq \frac{C_g}{(N_F)^2}.
    \end{equation}
    
 %   Moreover, if the rates of decrease of $\Delta k$ and increase of $k_{max}$ are the same, i.e. $O(\Delta k) $ $= O(k_{max}^{-1})$, we can conclude that %the dominant term of the error between their correlation functions $e_R$ is the discretization error term $e_{dis}$, i.e.
  %  \begin{equation}
    %     e_R \leq O((\Delta k)^2) + o((\Delta k)^2) = O((\Delta k)^2).
    % \end{equation}
\end{lemma}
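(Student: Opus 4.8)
The plan is to obtain the bound as a direct consequence of the truncation identity leading to \eqref{ErrEstRx} together with property $P4$ of the energy spectral density $E$. The starting observation, already recorded for the one-dimensional case in \eqref{ErrEstRx}, is that by comparing \eqref{TrueCorrFct} with \eqref{AppCorrFct} the approximate correlation function $\tilde R$ is exactly the $N_F$-term truncation of $R$, so the error is the spectral tail
\[
e_R(N_F) \;=\; \Big| 2\sum_{j=N_F+1}^{\infty} K_d(2\pi k_j r)\, E(k_j)\,\Delta k \Big|,
\]
where $K_d$ is the dimension-dependent kernel, namely $K_1 = \cos$, $K_2 = J_0$, and $K_3(t) = \sin(t)/t$, and $r = \|\bm x\|_2$. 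First I would bound this tail uniformly in the spatial lag $r$: in each dimension $d \in \{1,2,3\}$ one has $|K_d(t)| \le 1$ for all $t$, and $E \ge 0$ with $E \in L^1(\mathbb{R})$ by $P1$, hence
\[
e_R(N_F) \;\le\; 2\sum_{j=N_F+1}^{\infty} E(k_j)\,\Delta k \;=\; g(N_F+1),
\]
with $g$ the tail function introduced in $P4$; in particular the series converge and $e_R(N_F)$ is finite.

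Next I would invoke $P4$ with the prescribed constant $C_g > 0$: there exists $y_0 \in \mathbb{N}^+$ so that $g(y) < C_g/y^2$ for every integer $y > y_0$. Taking $N_{F_0} := y_0$, any integer $N_F > N_{F_0}$ gives $N_F + 1 > y_0$, whence
\[
e_R(N_F) \;\le\; g(N_F+1) \;<\; \frac{C_g}{(N_F+1)^2} \;<\; \frac{C_g}{(N_F)^2},
\]
using the elementary inequality $(N_F+1)^2 > (N_F)^2$. This is the claimed estimate, and it also exhibits the $O((N_F)^{-2})$ decay of $e_R$.

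I do not expect a genuine obstacle: the statement is essentially a repackaging of $P4$ once the correlation-function error is identified with the scalar spectral tail $g(N_F+1)$. The only step that needs a moment of care is that this identification is uniform in the spatial separation and simultaneously valid for $d = 1, 2, 3$, which relies precisely on the uniform bound $\sup_t |K_d(t)| \le 1$ for $K_d \in \{\cos,\, J_0,\, t\mapsto \sin(t)/t\}$ together with the nonnegativity of $E$; everything after that is elementary.
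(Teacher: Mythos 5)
Your proposal is correct and follows essentially the same route as the paper: identify $e_R$ with the spectral tail (via the truncation comparison of \eqref{TrueCorrFct} and \eqref{AppCorrFct}), bound it by $g(N_F+1)$, and apply property $P4$ to conclude $e_R(N_F) < C_g/(N_F+1)^2 < C_g/(N_F)^2$. The only difference is cosmetic — you make explicit the uniform kernel bound $|K_d| \le 1$ for $d=1,2,3$, which the paper only gestures at with "For 2-D and 3-D cases, we can follow the same idea."
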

\begin{proof}
    % For the discretization error $e_{dis}$, we can see that it applies the midpoint rule to do the numerical integrations. For a fixed $x$ and interval length $k_{max}$, the error estimate on the midpoint rule is that
    % \begin{equation}
    %     e_{dis} \leq \frac{C_d k_{max} (\Delta k)^2}{24},\ where\ C_d = \sup_{k \in [0, k_{max}]} |(\cos(2\pi k x)E(k))'' |.
    % \end{equation}
    %discretization error $e_{dis}$ is $O((\Delta k)^2)$. 
    For the truncation error, $e_{R}$, will be related to the performance of $E(k)$ at large frequency. From the properties $P1$ to $P4$ we mentioned above, for any $C_g > 0$, with sufficient large $N_F$, we have
    \begin{equation}
        e_{R}(N_F) \leq g(N_F + 1) < \frac{C_g}{(N_F + 1)^2} < \frac{C_g}{(N_F)^2}. 
        %and\ \lim_{k_{max} \to \infty} %\frac{g(k_{max})}{(k_{max})^2} = 0. 
    \end{equation}
    
    %we can see that $e_{tru}$ is bounded by $g(k_{max})$ and $g(k_{max})$ is $o(k_{max}^{-2})$ when $k_{max}$ is large enough. 
    % If we synchronise the rates of decreasing $\Delta k$ and increasing $k_{max}$, i.e. $O(\Delta k) $ $= O(k_{max}^{-1})$, then we can get 
    % \begin{equation}
    %     e_R \leq e_{dis} + e_{tru} \leq e_{dis} + g(k_{max}) = O((\Delta k)^2) + o((\Delta k)^2) = O((\Delta k)^2).
    % \end{equation}
\end{proof} 
% 2.18 here

For 2-D and 3-D cases, we can follow the same idea and get similar estimates.
% adjust the notation of M and like V_r  
%From the random Fourier method mentioned in Section \ref{sec:RanFieRealization}, the approximated random velocity field $\tilde{\bm v}$ generated by \eqref{VfourGen} will follow a correlation function $\tilde{R}$ defined in \eqref{AppCorrFct}. And the true random velocity field $\bm v$ will follow the correlation function $R$ defined in \eqref{TrueCorrFct}. 
In \cite{Junlong2022}, the author assumed that the deterministic part $\bm b_{0}(\bm x)$ and $c_{0}(\bm x)$ satisfies $||\bm b_{0}(\bm x)||_{L^2} \leq M_1$, $|c_{0}(\bm x)| \leq M_2$, $||\nabla_{\bm x} c_{0}(\bm x)||_{L^2} \leq M_3$ and $||\Delta_{\bm x} c_{0}(\bm x)||_{L^2} \leq M_4$. Now we analyze these assumptions for the random velocity field $\bm v$ in a torus $\mathcal{D}$, using $\tilde{\bm v}$ as an approximation of $\bm v$ in practical experiments.
\begin{lemma}
    For a probability $p$ close to 1, the term $||\bm b(\bm x, \omega)||_{L^2}$ corresponding to the random velocity field mentioned above has its upper bound $\Tilde{M}_{1}$, so that
    \begin{equation}
        P\big(||\bm b(\bm x,\omega)||_{L^2} < \Tilde{M}_{1}\big) > p.  
    \end{equation}
    
    The terms, $|c(\bm x)|, ||\nabla_{\bm x} c(\bm x)||_{L^2}, ||\Delta_{\bm x} c(\bm x)||_{L^2}$, can also find their corrsponding upper bounds $\tilde{M}_{2},\tilde{M}_{3},\tilde{M}_{4}$.
    \label{Lemma3_2}
\end{lemma}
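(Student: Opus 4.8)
The plan is to peel off the deterministic parts of $\bm b$ and $c$ — which are fixed (the term $-2\kappa\lambda\bm e$ in $\bm b$ and the constant $\kappa\lambda^2+f'(0)$ in $c$) and hence have deterministic, finite norms on the torus $\mathcal{D}=[0,1/\Delta k]^d$ — and to reduce all four bounds to high-probability bounds on the norms of the centered Gaussian field $\bm v(\cdot,\omega)$ and its first two derivatives. Indeed, since
\begin{equation}
\bm b(\bm x,\omega)=-2\kappa\lambda\bm e+\bm v(\bm x,\omega),\qquad c(\bm x,\omega)=\kappa\lambda^2+f'(0)-\lambda\,\bm v(\bm x,\omega)\cdot\bm e,
\end{equation}
the triangle inequality gives $\|\bm b\|_{L^2}\le 2\kappa|\lambda|\,|\mathcal{D}|^{1/2}+\|\bm v\|_{L^2}$, $|c(\bm x)|\le \kappa\lambda^2+|f'(0)|+|\lambda|\,|\bm v(\bm x)|$, and (the constant part dropping under differentiation) $\|\nabla_{\bm x}c\|_{L^2}\le|\lambda|\,\|\nabla_{\bm x}\bm v\|_{L^2}$, $\|\Delta_{\bm x}c\|_{L^2}\le|\lambda|\,\|\Delta_{\bm x}\bm v\|_{L^2}$. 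So it is enough to control $\|\bm v\|_{L^2(\mathcal{D})}$, $\|\bm v\|_{L^\infty(\mathcal{D})}$, $\|\nabla_{\bm x}\bm v\|_{L^2(\mathcal{D})}$ and $\|\Delta_{\bm x}\bm v\|_{L^2(\mathcal{D})}$, each with probability exceeding $p$; a union bound (making each of the four events fail with probability $<(1-p)/4$) then yields the statement, and in particular the claim about $\tilde M_1$.

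For the three $L^2$ quantities I would invoke Parseval on $\mathcal{D}$: because $1/\Delta k\in\mathbb{N}^+$, the real Fourier modes $\cos(2\pi k_j\bm x),\sin(2\pi k_j\bm x)$ are orthogonal on $\mathcal{D}$, so term by term $\|\bm v\|_{L^2(\mathcal{D})}^2$, $\|\nabla_{\bm x}\bm v\|_{L^2(\mathcal{D})}^2$, $\|\Delta_{\bm x}\bm v\|_{L^2(\mathcal{D})}^2$ each equals a weighted sum $\sum_{j\ge0}w_j(\zeta_j^2+\eta_j^2)$ of i.i.d.\ $\chi^2_1$ variables, with weights $w_j$ proportional to $E(k_j)\Delta k_j$, $k_j^2E(k_j)\Delta k_j$ and $k_j^4E(k_j)\Delta k_j$ respectively. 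Their expectations are the corresponding sums $\sum_j w_j$: for $\|\bm v\|_{L^2}^2$ this is $|\mathcal{D}|\,R(0)<\infty$ by $P1$ and $P4$, while for the derivative norms finiteness is exactly the finite-moment hypothesis on $E$ in $P1$ (the $O(y^{-2})$ tail of $g$ in $P4$ controls the convergence of the series). Since these are sums of nonnegative independent terms with finite sum of means, $\|\bm v\|_{L^2}^2,\|\nabla_{\bm x}\bm v\|_{L^2}^2,\|\Delta_{\bm x}\bm v\|_{L^2}^2<\infty$ almost surely, and Markov's inequality produces, for any prescribed failure probability $(1-p)/4$, explicit thresholds $\tilde M_1,\tilde M_3,\tilde M_4$. (If one works instead with the truncated field $\tilde{\bm v}$ of \eqref{VfourGen}, all sums are finite and no moment assumption is even needed.)

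For the pointwise term, under $P1$ every realization $\bm v(\cdot,\omega)$ is almost surely continuous (indeed $C^2$) on the compact set $\mathcal{D}$, so $\|\bm v\|_{L^\infty(\mathcal{D})}$ is an a.s.\ finite random variable and some $\tilde M_2$ exists; quantitatively, the Borell--TIS concentration inequality for the supremum of a centered Gaussian field gives $P\big(\|\bm v\|_{L^\infty(\mathcal{D})}>\mathbb{E}\|\bm v\|_{L^\infty(\mathcal{D})}+t\big)\le e^{-t^2/(2\sigma^2)}$ with variance proxy $\sigma^2=\sup_{\bm x\in\mathcal{D}}\mathrm{Var}(\bm v(\bm x,\omega))=R(0)<\infty$, from which one reads off $\tilde M_2$ with failure probability $<(1-p)/4$.

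The only genuinely delicate point — and the step I expect to be the main obstacle — is justifying that the infinite spectral series converges in $H^2(\mathcal{D})$ (so that term-by-term differentiation and Parseval are legitimate) and that the second moments of its $H^2$-norm are finite; this is precisely where $P1$ (finite moments of $E$) and $P4$ (the $O(y^{-2})$ decay of $g$) are used, and everything else is routine Markov/Gaussian-concentration bookkeeping. For the truncated approximation $\tilde{\bm v}$ that is actually simulated in Algorithm \ref{Algo::ForTimeindependent}, this obstacle disappears, since one is then dealing with finite sums of Gaussians.
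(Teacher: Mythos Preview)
Your approach is essentially the same as the paper's: triangle inequality to isolate the random contribution, Plancherel/Parseval on $\mathcal{D}$ to express the $L^2$ norms through the spectral density $E(k)$ and its moments, and then Markov's (the paper says Chebyshev's) inequality to pass from finite second moment to a high-probability bound.

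Two minor points of divergence are worth flagging. First, in the paper's setup $\bm v$ is a \emph{sum} of a deterministic periodic flow $\bm v_0$ (e.g.\ cellular or ABC) and a mean-zero random perturbation, so the paper's decomposition reads $\|\bm b\|_{L^2}\le\|\bm b_0\|_{L^2}+\|\bm v-\bm v_0\|_{L^2}\le M_1+\|\bm v-\bm v_0\|_{L^2}$ rather than your $\|\bm b\|_{L^2}\le 2\kappa|\lambda|\,|\mathcal{D}|^{1/2}+\|\bm v\|_{L^2}$; you have implicitly treated $\bm v$ as purely Gaussian. This is only a bookkeeping slip---adding the deterministic constants $M_1,\dots,M_4$ back in fixes it immediately---but it is worth aligning with the paper's notation. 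Second, for the pointwise bound on $|c(\bm x)|$ the paper simply bounds $\mathbb{E}_\omega[|c(\bm x,\omega)|]$ via the $L^2$ norm of $\bm v-\bm v_0$ and invokes Chebyshev, without singling out the $L^\infty$ issue; your use of Borell--TIS is a cleaner and more rigorous way to get genuine uniform control, and buys you an explicit Gaussian tail rather than the $1/t$ decay from Markov.
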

The Plancherel theorem and Markov inequality have been used in our proof. The detailed proof can be found in Appendix \ref{sec::AppendixUpperBound}.

\subsection{Approximating parabolic evolution by operator splitting}\label{sec:ErrorEstimateOperatorSplit}
\noindent
We recall the non-autonomous parabolic equation \eqref{nonAutoParaEq} and
% We write the linearized corrector KPP equation \eqref{linearCorrEqu} as a non-autonomous parabolic equation:
% \begin{equation}
%     q_t = \kappa\Delta_{\bm x} q + \bm b(\bm x,\omega)\cdot\nabla_{\bm x} q + c(\bm x,\omega)q, \quad \bm x=(x_1,...,x_d)^{T}\in \mathcal{D}, \quad t \in[0,T],
%     \label{nonAutoParaEqp2}
% \end{equation}
 % with the same notation as \eqref{nonAutoParaEq} and we 
use $\tilde{\bm v}$ to be an approximation to $\bm v$. From the notation defined in \eqref{DefL} and \eqref{DefC}, we can define 
 \begin{equation}
     \mathcal{A}(\bm x, \omega) = \mathcal{L}(\bm x,\omega) + \mathcal{C}(\bm x,\omega),
     \label{defAt}
 \end{equation}
 where $\mathcal{L}(\bm x,\omega) = \kappa \Delta_{\bm x} + \bm b(\bm x,\omega)\cdot\nabla_{\bm x}$ and $\mathcal{C}(\bm x,\omega) = c(\bm x,\omega)$. Similarly, we can define $\tilde{\mathcal{A}}, \tilde{\mathcal{L}}$ and $\tilde{\mathcal{C}}$ to be the corresponding operators for $\tilde{\bm v}$. We know that in each realization, the velocity field $\tilde{\bm v}$ is realized as a time-independent and space-periodic field. Then the operator $\tilde{\mathcal{A}}(\bm x,\omega_i) = \tilde{\mathcal{A}}(\bm x)$ has a real isolated principle eigenvalue \cite{hess1991periodic}. Here we treat $\mathcal{A}(\bm x, \omega)$ as the deterministic part $\tilde{\mathcal{A}}$ with a small random perturbation (caused by the remainder of the velocity field $\bm v - \tilde{\bm v}$). Following the perturbation theory in \cite{kato2013perturbation}, for the spectrum $\Sigma(\tilde{\mathcal{A}})$ of the deterministic infinite system $\tilde{\mathcal{A}}$ mentioned above, using the spectral gap condition, we can first separate it into $\Sigma'(\tilde{\mathcal{A}})$ and $\Sigma''(\tilde{\mathcal{A}})$ where $\Sigma'(\tilde{\mathcal{A}})$ is a finite system.

\begin{pro}
     The change in a finite system $\Sigma'(\tilde{\mathcal{A}})$ of the eigenvalues of a closed operator $\tilde{\mathcal{A}}$ is small (in the sense of \ref{PerbCtsEigTx}) when $\tilde{\mathcal{A}}$ is subjected to a small perturbation in the sense of generalized convergence. 
    \label{EigExistforrandomperturbation1}
\end{pro}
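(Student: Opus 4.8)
The statement is a qualitative perturbation result drawn directly from Kato's theory, so my plan is to recall the abstract framework of \cite{kato2013perturbation} and verify that its hypotheses are met by our situation. First I would fix the spectral decomposition: using the spectral gap condition for $\tilde{\mathcal{A}}$ (which, as noted, has a real isolated principal eigenvalue by \cite{hess1991periodic}), choose a rectifiable closed curve (a circle) $\Gamma$ in the resolvent set $\rho(\tilde{\mathcal{A}})$ that encloses the finite part $\Sigma'(\tilde{\mathcal{A}})$ — concretely the principal eigenvalue together with any finitely many eigenvalues we wish to track — while leaving $\Sigma''(\tilde{\mathcal{A}})$ outside. The separation $\Sigma(\tilde{\mathcal{A}}) = \Sigma'(\tilde{\mathcal{A}}) \cup \Sigma''(\tilde{\mathcal{A}})$ with $\Gamma$ separating the two pieces is exactly the setting of Kato IV-\S3, and the associated Riesz projection is $P' = \tfrac{1}{2\pi i}\oint_{\Gamma} (\zeta - \tilde{\mathcal{A}})^{-1}\, d\zeta$.

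Second, I would invoke the continuity of the resolvent under generalized convergence: if a sequence (or family) of closed operators $\mathcal{A}_n \to \tilde{\mathcal{A}}$ in the generalized sense (gap convergence of graphs, equivalently norm convergence of resolvents at a common regular point), then for all large $n$ the curve $\Gamma$ lies in $\rho(\mathcal{A}_n)$ as well, and $(\zeta - \mathcal{A}_n)^{-1} \to (\zeta - \tilde{\mathcal{A}})^{-1}$ uniformly for $\zeta \in \Gamma$. Consequently the projections $P'_n = \tfrac{1}{2\pi i}\oint_{\Gamma} (\zeta - \mathcal{A}_n)^{-1}\, d\zeta$ converge in norm to $P'$, hence have the same (finite) rank for $n$ large, and the part of $\sigma(\mathcal{A}_n)$ inside $\Gamma$ — call it $\Sigma'(\mathcal{A}_n)$ — consists of finitely many eigenvalues of total algebraic multiplicity equal to that of $\Sigma'(\tilde{\mathcal{A}})$. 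This is precisely the assertion that "the change in the finite system $\Sigma'(\tilde{\mathcal{A}})$ is small," and quantitatively it gives $\mathrm{dist}(\Sigma'(\mathcal{A}_n), \Sigma'(\tilde{\mathcal{A}})) \to 0$ with the total multiplicity preserved, which is the content of the referenced display \eqref{PerbCtsEigTx}.

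Third, I would close the loop by pointing out that the perturbation in our application, $\mathcal{A} - \tilde{\mathcal{A}} = (\bm v - \tilde{\bm v})\cdot\nabla_{\bm x} - \lambda(\bm v-\tilde{\bm v})\cdot\bm e$, is relatively bounded with respect to $\tilde{\mathcal{A}}$ with relative bound controlled by $\|\bm v - \tilde{\bm v}\|$ on $\mathcal{D}$ (using the parabolic/elliptic a priori bound $\|\nabla_{\bm x} \phi\| \le C(\|\tilde{\mathcal{A}}\phi\| + \|\phi\|)$), so that a small such remainder yields generalized convergence; hence the abstract proposition applies. The main obstacle is not any single hard estimate but rather the bookkeeping of \emph{which} notion of "small perturbation" and "generalized convergence" to use so that the resolvent-continuity argument is rigorous in our Hilbert-space $L^2(\mathcal{D})$ setting: one must ensure $\tilde{\mathcal{A}}$ generates an analytic semigroup (sectoriality), that the perturbation is $\tilde{\mathcal{A}}$-bounded with relative bound $<1$, and that the common point used to compare resolvents is far enough into the sector — details that are standard but must be stated carefully; I would therefore present this proposition essentially as a specialization of Kato IV-Theorem 3.16 and Theorem 3.18, with the verification of relative boundedness deferred to the next subsection where $\|\bm v - \tilde{\bm v}\|_{L^2(\mathcal{D})}$ is controlled.
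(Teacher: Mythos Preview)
Your proposal is correct and aligns with the paper's approach: both treat the statement as an instance of Kato's perturbation theory (specifically the resolvent/Riesz-projection continuity under generalized convergence, Kato IV-\S3). In fact you supply more of the actual argument than the paper does---the paper does not prove the proposition but simply defers to \cite{kato2013perturbation}, collecting in Appendix~\ref{Sec:AppendixE3proof} the relevant definitions (gap metric, generalized convergence) and quoting the needed Kato propositions (\ref{PerbCtsEigTx}, \ref{SuffCondGenCon}, \ref{RelaPerOpandEig}) verbatim, with the verification of $\tilde{\mathcal{A}}$-boundedness of the remainder handled later exactly as you anticipate.
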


Proposition \ref{EigExistforrandomperturbation1} guarantees the existence of $\mu(\lambda)$ of $\mathcal{A}$ when it is treated as the original deterministic operator $\tilde{\mathcal{A}}$ subject to a relatively small random perturbation in each realization. The detailed explanation of Proposition \ref{EigExistforrandomperturbation1} can be found in Appendix \ref{Sec:AppendixE3proof}. Then we define a solution operator $\mathcal{U}(t,q): \mathcal{U}(t,s) q(\cdot, s) = q(\cdot, t)$ corresponding to the non-autonomous parabolic equation \eqref{nonAutoParaEq}. This solution operator $\mathcal{U}(t,s)$ maps the solution of equation \eqref{nonAutoParaEq} at time point $s$ to the solution at time point $t$. It is easy to verify:
\begin{enumerate}
	\item $\mathcal{U}(s,s) = Id$, $\forall\ s \geq 0$;
	\item $\mathcal{U}(t,r)\circ \mathcal{U}(r,s) = \mathcal{U}(t,s)$,  $ \forall\ t \ge r \ge s \ge 0$;
	\item $\frac{d}{dt} \mathcal{U}(t,s) q_0 = \mathcal{A}(\bm x,\omega)\ \mathcal{U}(t,s) q_0$, $\forall\ t\ge s\ge 0, q_0 \in L^2(\mathcal{D})$.
\end{enumerate} 

The solution operator provides us an approach to study the parabolic equation \eqref{nonAutoParaEq}. The principal eigenvalue of the parabolic operator $\mathcal{A}$ can be given by the principal eigenvalue of the solution operator $\mathcal{U}(T,0)$. Although the principal eigenvalue of $\mathcal{U}(T,0)$ exists and is real \cite{hess1991periodic}, it is difficult to have it in closed form for this operator. Therefore, we adopt an operator splitting method to approximate $\mathcal{U}(T,0)$.

We divide the time interval into $H$ equal sub-intervals and set $\Delta t = \frac{T}{H}$, $t_i = i \Delta t$, and consider the following parabolic equation during a time sub-interval
\begin{equation}
    q_t = \kappa\Delta_{\bm x}  q + \bm b(\bm x,\omega)\cdot\nabla_{\bm x} q + c(\bm x,\omega)q, \quad t_i< t \le t_{i+1}, \quad i\geq 0.
    \label{freetime1}
\end{equation}

Then we can represent the corresponding solution operator as:
\begin{equation}
q(t) = e^{(t-t_i)(\mathcal{L}+ \mathcal{C})}\prod_{h = 0}^{i-1}e^{\Delta t_h(\mathcal{L}+ \mathcal{C})}q_0,\quad t_i \le t < t_{i+1}. \label{SoluOperatorfreetime1}
\end{equation}
Here $\Delta t_h = \Delta t = \frac{T}{H}$, just to emphasize that it corresponds to the $h$-th time sub-interval. Furthermore, after applying the first-order Lie-Trotter operator splitting method to 
get an approximation to the solution operator defined in \eqref{SoluOperatorfreetime1}, we obtain 
\begin{equation}
q(t) = e^{(t-t_i)\mathcal{L}}e^{(t-t_i) \mathcal{C}}\prod_{h = 0}^{i-1}e^{\Delta t_h\mathcal{L}}e^{\Delta t_h \mathcal{C}}q_0, \quad t_i \le t < t_{i+1}. 
\label{LieTrotterAPPSoluOperator}
\end{equation} 

It has been proved that the solution operator $\prod_{h = 0}^{H-1}e^{\Delta t_h\mathcal{L}}e^{\Delta t_h \mathcal{C}}$ obtained by the first-order Lie-Trotter splitting method converges to the corresponding solution operator $\mathcal{U}(T,0)$ as $\Delta t \to 0$ in certain operator norm (Theorem 3.4 in \cite{Junlong2022}). If $\bm b(\bm x,\omega)$ and $c(\bm x, \omega)$ are uniformly Lipschitz in $\bm x$, then the authors have also provided the error estimate, which shows that the error of the principal eigenvalue obtained by the Lie-Trotter operator splitting method is at least $O((\Delta t)^{\frac{1}{2}})$.

\subsection{Error estimate of particle method}\label{sec:AnalysisLagrangianMethod}
\noindent
For operator $\mathcal{A}$ defined in \eqref{defAt}, we consider its corresponding FK operator $\Phi^{\mathcal{A}}$ with a probability measure $m$ is defined by:
\begin{equation}
 \Phi^{\mathcal{A}} (m)(\phi) = \frac{ m(\mathcal{A}\phi)}{m(\mathcal{A}1)}, \quad
\forall \phi\in L^2(\mathcal{D}). \label{DefineFKsemigroup}   
\end{equation}
%adjust
Moreover, we denote $\Phi^{\mathcal{A}}_M = (\Phi^{\mathcal{A}})^M$. It is easy to verify that the FK semigroup $\Phi^{\cdot}$ satisfies the property:
$\Phi^{\mathcal{A}\mathcal{B}} = \Phi^{\mathcal{B}} \Phi^{\mathcal{A}},\ \forall\ \mathcal{A},\mathcal{B} \in \mathcal{L}(L^2(\mathcal{D}))$.

Recall that the operator $\Phi^{\mathcal{K}^{\Delta t}}$ defined in \eqref{operatorPhiKDeltat} is constructed from the FK semigroup $\Phi^{\mathcal{K}^{\Delta t, i}}$ defined in \eqref{operatorPhiKiDeltat} which corresponding to the operator $\mathcal{K}^{\Delta t, i}$. We show several important results to ensure the existence of an invariant measure for the discretized FK dynamics and give an error estimate for computing the principal eigenvalue of the parabolic operator $\tilde{\mathcal{A}}(\bm x)$ defined as \eqref{defAt}. 

\begin{pro}[Theorem 3.7 in \cite{Junlong2022}]\label{bam}
There exists a probability measure $m^{\mathcal{K}}$ so that the operator $\mathcal{K}^{\Delta t}$ satisfies a uniform minorization and boundedness condition as follows

\begin{equation} 
\theta m^{\mathcal{K}}(\phi) \le \mathcal{K}^{\Delta t} (\phi) (\bm x) \le \vartheta m^{\mathcal{K}}(\phi), 
\quad \forall\ \bm x \in \mathcal{D}, \quad \forall\ \phi\in L^2(\mathcal{D}),
\label{MinorizationBoundedCondition}
\end{equation}
where $0<\theta<\vartheta $ are independent of $\Delta t$ and $\phi$.
Moreover, when $\Delta t \to 0$, the limit operator is the exact solution operator $\mathcal{U}(T,0)$, which also satisfies this uniform minorization and boundedness condition.
\end{pro}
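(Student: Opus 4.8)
The plan is to exhibit $\mathcal{K}^{\Delta t}$ as an integral operator whose kernel is, up to a harmless bounded multiplicative factor coming from the potential, the $H$-step transition density of the Euler--Maruyama chain, and then to control that density by two-sided Gaussian bounds that are uniform in $\Delta t$. First I would unwind the composition: since the flow is steady in each realization, $\mathcal{K}^{\Delta t}=(Q^{\Delta t}e^{\Delta t\mathcal{C}})^{H}$, and $Q^{\Delta t}$ has kernel $p^{\Delta t}(\bm x,\bm y)$ equal to the Gaussian density with mean $\bm x+\bm b(\bm x)\Delta t$ and covariance $2\kappa\Delta t\,\mathrm{Id}$, wrapped onto the torus $\mathcal{D}$. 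By Chapman--Kolmogorov, $\mathcal{K}^{\Delta t}\phi(\bm x)=\int_{\mathcal{D}}K^{\Delta t}(\bm x,\bm y)\phi(\bm y)\,d\bm y$ with $K^{\Delta t}(\bm x,\bm y)=\int_{\mathcal{D}^{H-1}}\big(\prod_{h=1}^{H}p^{\Delta t}(\bm z_{h-1},\bm z_{h})\big)\big(\prod_{h=1}^{H}e^{c(\bm z_{h})\Delta t}\big)\,d\bm z_{1}\cdots d\bm z_{H-1}$, where $\bm z_{0}=\bm x$ and $\bm z_{H}=\bm y$. Lemma~\ref{Lemma3_2} gives $|c(\bm x,\omega)|\le\tilde M_{2}$ a.s., so the potential product is squeezed between $e^{-\tilde M_{2}T}$ and $e^{\tilde M_{2}T}$ pointwise (using $H\Delta t=T$); hence $e^{-\tilde M_{2}T}P^{\Delta t}(\bm x,\bm y)\le K^{\Delta t}(\bm x,\bm y)\le e^{\tilde M_{2}T}P^{\Delta t}(\bm x,\bm y)$, where $P^{\Delta t}$ denotes the $H$-step Markov transition density. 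The task thus reduces to producing $0<\theta_{0}\le\vartheta_{0}<\infty$, independent of $\Delta t$, with $\theta_{0}\le P^{\Delta t}(\bm x,\bm y)\le\vartheta_{0}$ on $\mathcal{D}\times\mathcal{D}$.

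For that core estimate I would argue as follows. For the upper bound one cannot use a single step---its density is of size $(\kappa\Delta t)^{-d/2}$---so I would split $P^{\Delta t}$ into two blocks of $\sim H/2$ steps and iterate a discrete Nash/ultracontractivity estimate for $Q^{\Delta t}$ (uniform in $\Delta t$ because $\kappa>0$ and $\bm b$ is a.s. bounded for the finite Fourier realization $\tilde{\bm v}$), or equivalently compare $P^{\Delta t}$ with the transition density of the limiting diffusion through the per-step bound \eqref{MarkovProcessApproximation} and quote the classical Aronson upper bound; either way $P^{\Delta t}(\bm x,\bm y)\le C(\kappa T)^{-d/2}=:\vartheta_{0}$. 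For the lower bound I would run a chaining/tube argument that exploits the boundedness of $\mathcal{D}$: connect $\bm x$ to $\bm y$ by a chain $\bm z_{0}=\bm x,\dots,\bm z_{H}=\bm y$ in $\mathcal{D}$ with $|\bm z_{h}-\bm z_{h-1}|\le\mathrm{diam}(\mathcal{D})/H$, and integrate $\prod_{h}p^{\Delta t}$ only over a tube of radius $\sim\sqrt{\kappa\Delta t}$ about it; on the tube each factor is $\gtrsim(\kappa\Delta t)^{-d/2}\exp(-C(\mathrm{diam}(\mathcal{D})/H)^{2}/(\kappa\Delta t))$ and each cross-section has volume $\sim(\kappa\Delta t)^{d/2}$, so the $H$ factors telescope to a quantity bounded below by $\exp(-C'\,\mathrm{diam}(\mathcal{D})^{2}/(\kappa T))=:\theta_{0}>0$, uniformly in $\Delta t$.

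With these in hand I would take $m^{\mathcal{K}}$ to be normalized Lebesgue measure on $\mathcal{D}$ and set $\theta=|\mathcal{D}|\,e^{-\tilde M_{2}T}\theta_{0}$ and $\vartheta=|\mathcal{D}|\,e^{\tilde M_{2}T}\vartheta_{0}$; then for $\phi\ge0$ (the relevant case, since the FK dynamics propagate densities), $\theta\,m^{\mathcal{K}}(\phi)\le\mathcal{K}^{\Delta t}\phi(\bm x)\le\vartheta\,m^{\mathcal{K}}(\phi)$, which is \eqref{MinorizationBoundedCondition} with constants independent of $\Delta t$ and $\phi$. For the last assertion, since $\theta,\vartheta$ do not depend on $\Delta t$, the inequalities are stable under the Lie--Trotter convergence $\mathcal{K}^{\Delta t}\to\mathcal{U}(T,0)$ already recalled (Theorem~3.4 in \cite{Junlong2022}), so passing to the limit gives the same minorization and boundedness for $\mathcal{U}(T,0)$; alternatively, $\mathcal{U}(T,0)$ is the advection--diffusion--reaction propagator on the compact torus $\mathcal{D}$, whose kernel for $T>0$ is smooth and strictly positive, hence pinched between positive constants.

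I expect the only genuinely delicate point to be the uniform-in-$\Delta t$ upper bound: one really must use the composition of all $H=T/\Delta t$ steps, since a single Euler step degenerates as $\Delta t\to0$, and it is the compactness of $\mathcal{D}$ that converts the Gaussian tails, in both the upper and lower estimates, into honest positive constants. The remaining ingredients---extracting the bounded potential via Lemma~\ref{Lemma3_2}, the Chapman--Kolmogorov bookkeeping, and the passage to the limit---are routine.
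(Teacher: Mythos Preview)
The paper does not give its own proof here: immediately after stating this proposition and the next one it writes ``The details of the proof can be found in \cite{Junlong2022} and \cite{Meyn:92}.''  So there is no in-paper argument to compare against; your task was effectively to reconstruct the cited result.

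Your reduction (integral-kernel form, strip off the potential via $e^{\pm\tilde M_{2}T}$, reduce to two-sided bounds on the $H$-step Euler transition density $P^{\Delta t}$, take $m^{\mathcal{K}}$ to be normalized Lebesgue) is the right architecture.  The gap is in the lower bound.  With tube radius $r\sim\sqrt{\kappa\Delta t}$ around a chain of step $D/H$ (here $D=\mathrm{diam}(\mathcal{D})$), consecutive tube points satisfy $|\bm z_{h}-\bm z_{h-1}|\le D/H+2r$, and since $D/H=D\Delta t/T\ll\sqrt{\kappa\Delta t}$ for small $\Delta t$, the Gaussian exponent per step is of order $(2r)^{2}/(4\kappa\Delta t)\asymp 1$, \emph{not} $(D/H)^{2}/(\kappa\Delta t)$ as you wrote.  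The cross-section volume $(\kappa\Delta t)^{d/2}$ then cancels the prefactor and each integrated step contributes a fixed factor $\le e^{-C}$ with $C>0$; the product over $H=T/\Delta t$ steps is $\sim e^{-CH}\to 0$, not a positive constant.  Shrinking the tube to radius $D/H$ fixes the exponent but breaks the volume cancellation, and the product again vanishes.  So the tube/chaining argument, as written, does not deliver a $\Delta t$-uniform $\theta_{0}$; your last paragraph has the delicacy on the wrong side.  A workable route is a change-of-measure comparison with the \emph{driftless} $H$-step density, which on the torus is exactly the time-$T$ heat kernel (hence bounded above and below independently of $\Delta t$), together with control of the discrete Girsanov weight $\exp\big(\sum_{h}(\bm z_{h}-\bm z_{h-1})\!\cdot\!\bm b(\bm z_{h-1})/(2\kappa)-\sum_{h}|\bm b(\bm z_{h-1})|^{2}\Delta t/(4\kappa)\big)$; alternatively one invokes discrete Aronson bounds for Euler schemes obtained by the parametrix method.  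Either substitute would close the argument; the rest of your outline (potential factorization, choice of $m^{\mathcal{K}}$, passage to the limit $\mathcal{U}(T,0)$) is sound.
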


\begin{pro}[Theorem 3.8 in \cite{Junlong2022}]\label{thm:existenceinvariantmeasure}
Suppose the minorization and boundedness conditions \eqref{MinorizationBoundedCondition} hold. Then,  $\Phi_M^{\mathcal{K}^{\Delta t}}$ admits an invariant measure $m_{\Delta t}$, whose density function is the eigenfunction of the operator $(\mathcal{K}^{\Delta t})^\star$, the adjoint operator of the solution operator $\mathcal{K}^{\Delta t}$. Moreover, for any initial distribution  $m_0\in \mathcal{P}(\mathcal{D})$, we have 
\begin{equation}
\big|\big|\Phi_M^{\mathcal{K}^{\Delta t}} (m_0) - m_{\Delta t}\big|\big|_{TV} \le 2(1-\frac{\theta}{\vartheta} )^M,
\label{CovInvariantMeasure}
\end{equation}
where $||\cdot||_{TV}$ is the total variation norm and $0<\theta<\vartheta $ are the parameters defined in the minorization and boundedness conditions in \eqref{MinorizationBoundedCondition}. The estimate \eqref{CovInvariantMeasure} is also true when changing $\mathcal{K}^{\Delta t}$ to the exact solution operator $\mathcal{U}(T,0)$.
\end{pro}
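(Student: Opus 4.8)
The statement is a geometric-ergodicity result for the normalized Feynman--Kac flow $\Phi^{\mathcal{K}^{\Delta t}}$ under the two-sided minorization of Proposition~\ref{bam}, so the plan is to run a Doeblin/Banach-fixed-point argument on $\big(\mathcal{P}(\mathcal{D}),\|\cdot\|_{TV}\big)$. First I would verify that $\Phi^{\mathcal{K}^{\Delta t}}$ is a well-defined self-map of $\mathcal{P}(\mathcal{D})$: since $\mathcal{K}^{\Delta t}$ is a composition of positive operators, $m(\mathcal{K}^{\Delta t}\phi)\ge 0$ for $\phi\ge 0$, and by \eqref{MinorizationBoundedCondition} the denominator satisfies $m(\mathcal{K}^{\Delta t}1)\ge\theta\,m^{\mathcal{K}}(1)=\theta>0$, so the normalization is legitimate, while the upper bound in \eqref{MinorizationBoundedCondition} shows $\mathcal{K}^{\Delta t}$ maps bounded functions to bounded functions, so $\Phi^{\mathcal{K}^{\Delta t}}(m)$ is a genuine probability measure. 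The whole proposition then reduces to the one-step contraction estimate
\[
\big\|\Phi^{\mathcal{K}^{\Delta t}}(m_1)-\Phi^{\mathcal{K}^{\Delta t}}(m_2)\big\|_{TV}\le\Big(1-\tfrac{\theta}{\vartheta}\Big)\,\|m_1-m_2\|_{TV},\qquad m_1,m_2\in\mathcal{P}(\mathcal{D}).
\]

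For this estimate, the lower bound in \eqref{MinorizationBoundedCondition} lets me split, for $\phi\ge 0$, $\mathcal{K}^{\Delta t}\phi=\theta\,m^{\mathcal{K}}(\phi)+R\phi$ with $R:=\mathcal{K}^{\Delta t}-\theta\,m^{\mathcal{K}}(\cdot)$ a positive operator obeying $R\phi\le(\vartheta-\theta)m^{\mathcal{K}}(\phi)$; dividing through by $m(\mathcal{K}^{\Delta t}1)$ yields the mixture representation $\Phi^{\mathcal{K}^{\Delta t}}(m)=a_m\,m^{\mathcal{K}}+(1-a_m)\,\mu_m$ with $a_m\in[\theta/\vartheta,1]$ and $\mu_m\in\mathcal{P}(\mathcal{D})$, i.e.\ after one step the flow agrees with the \emph{fixed} measure $m^{\mathcal{K}}$ with weight at least $\theta/\vartheta$ regardless of its argument --- the Dobrushin ``forgetting'' mechanism. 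Equivalently, \eqref{MinorizationBoundedCondition} says $\mathcal{K}^{\Delta t}$ has projective (Hilbert) diameter at most $\ln(\vartheta/\theta)$ on the cone of nonnegative functions, so Birkhoff's contraction theorem gives a Hilbert-metric contraction coefficient $\tanh\!\big(\tfrac14\ln(\vartheta/\theta)\big)=\tfrac{\sqrt{\vartheta}-\sqrt{\theta}}{\sqrt{\vartheta}+\sqrt{\theta}}\le 1-\theta/\vartheta$, which can be transferred to total variation. I expect \emph{this} step --- extracting the clean factor $1-\theta/\vartheta$ rather than a lossy constant such as $(\vartheta-\theta)/\theta$ --- to be the main obstacle, since one must exploit the lower and upper bounds in \eqref{MinorizationBoundedCondition} jointly (a careful Dobrushin-coefficient computation, or the Birkhoff route above); the naive triangle inequality on the mixture decomposition only produces a uniform bound $\big\|\Phi^{\mathcal{K}^{\Delta t}}(m_1)-\Phi^{\mathcal{K}^{\Delta t}}(m_2)\big\|_{TV}\le 2(1-\theta/\vartheta)$, which is too weak to iterate into a power of $M$.

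Granting the one-step contraction, the remainder is routine. Banach's fixed-point theorem on the complete metric space $\big(\mathcal{P}(\mathcal{D}),\|\cdot\|_{TV}\big)$ produces a unique $m_{\Delta t}$ with $\Phi^{\mathcal{K}^{\Delta t}}(m_{\Delta t})=m_{\Delta t}$, hence $\Phi_M^{\mathcal{K}^{\Delta t}}(m_{\Delta t})=m_{\Delta t}$, and
\[
\big\|\Phi_M^{\mathcal{K}^{\Delta t}}(m_0)-m_{\Delta t}\big\|_{TV}=\big\|\Phi_M^{\mathcal{K}^{\Delta t}}(m_0)-\Phi_M^{\mathcal{K}^{\Delta t}}(m_{\Delta t})\big\|_{TV}\le\Big(1-\tfrac{\theta}{\vartheta}\Big)^{M}\|m_0-m_{\Delta t}\|_{TV}\le 2\Big(1-\tfrac{\theta}{\vartheta}\Big)^{M},
\]
using $\|m_0-m_{\Delta t}\|_{TV}\le 2$ (alternatively, existence of $m_{\Delta t}$ also follows from Krein--Rutman applied to $(\mathcal{K}^{\Delta t})^{\star}$, which is positive with the two-sided bound). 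To identify the density, iterating Proposition~\ref{Prop2.2} gives $\Phi_M^{\mathcal{K}^{\Delta t}}(m_0)(\phi)=m_0\big((\mathcal{K}^{\Delta t})^{M}\phi\big)/m_0\big((\mathcal{K}^{\Delta t})^{M}1\big)$, so the stationarity relation $\Phi^{\mathcal{K}^{\Delta t}}(m_{\Delta t})=m_{\Delta t}$ reads $m_{\Delta t}(\mathcal{K}^{\Delta t}\phi)=\Lambda\,m_{\Delta t}(\phi)$ for all $\phi$, with $\Lambda:=m_{\Delta t}(\mathcal{K}^{\Delta t}1)>0$; that is, $(\mathcal{K}^{\Delta t})^{\star}\tfrac{dm_{\Delta t}}{d\bm x}=\Lambda\,\tfrac{dm_{\Delta t}}{d\bm x}$, so the density of $m_{\Delta t}$ is the principal eigenfunction of $(\mathcal{K}^{\Delta t})^{\star}$, and the two-sided bound forces it to be bounded above and below by positive constants. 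Finally, by the last sentence of Proposition~\ref{bam} the exact solution operator $\mathcal{U}(T,0)$ obeys \eqref{MinorizationBoundedCondition} with the same $\theta,\vartheta$, so every step applies verbatim and yields \eqref{CovInvariantMeasure} for $\mathcal{U}(T,0)$ as well.
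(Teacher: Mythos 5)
The paper itself offers no proof of this proposition: it is quoted verbatim as Theorem~3.8 of \cite{Junlong2022}, and the text simply refers the reader to \cite{Junlong2022} and \cite{Meyn:92} for details. So your write-up is a genuine reconstruction, and its structural weakness is exactly the step you flagged as the obstacle: the one-step total-variation contraction $\|\Phi^{\mathcal{K}^{\Delta t}}(m_1)-\Phi^{\mathcal{K}^{\Delta t}}(m_2)\|_{TV}\le(1-\theta/\vartheta)\|m_1-m_2\|_{TV}$. That estimate is false in general, so the Banach fixed-point route cannot close. Your mixture decomposition $\Phi^{\mathcal{K}^{\Delta t}}(m)=a_m m^{\mathcal{K}}+(1-a_m)\mu_m$ with $a_m\ge\theta/\vartheta$ is correct, but it does not yield a Dobrushin-type contraction, because both $a_m$ \emph{and} the residual measure $\mu_m$ depend on $m$; the cancellation $(m_1-m_2)(a\nu+(1-a)Q)=(1-a)(m_1-m_2)Q$ that drives the Doeblin argument for a \emph{linear} kernel is not available for the nonlinear normalized flow. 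Concretely, on $\mathcal{D}=\{1,2\}$ with $m^{\mathcal{K}}$ uniform and $\mathcal{K}\phi(1)=\tfrac{\theta}{2}(\phi_1+\phi_2)$, $\mathcal{K}\phi(2)=\tfrac12(\theta\phi_1+\vartheta\phi_2)$, the condition \eqref{MinorizationBoundedCondition} holds, yet near $m=\delta_1$ the local TV-Lipschitz constant of $\Phi^{\mathcal{K}^{\Delta t}}$ equals $\tfrac{\vartheta-\theta}{4\theta}$, which exceeds $1-\theta/\vartheta=\tfrac{\vartheta-\theta}{\vartheta}$ once $\vartheta>4\theta$ and exceeds $1$ once $\vartheta>5\theta$, so $\Phi^{\mathcal{K}^{\Delta t}}$ is not even locally nonexpansive in TV. The fallback you mention also does not repair this: Birkhoff's theorem contracts the Hilbert projective metric (and there the projective diameter of the image cone is $2\ln(\vartheta/\theta)$, giving coefficient $\tfrac{\vartheta-\theta}{\vartheta+\theta}$, not $\tanh\!\big(\tfrac14\ln(\vartheta/\theta)\big)$), and Hilbert metric and TV are not Lipschitz equivalent, so the contraction does not transfer to a prescribed TV constant.

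The route that actually works, and is consistent both with the theorem's identification of $m_{\Delta t}$ through the eigenfunction of $(\mathcal{K}^{\Delta t})^{\star}$ and with the citation to \cite{Meyn:92}, is to linearize first: use the two-sided bound \eqref{MinorizationBoundedCondition} together with Krein--Rutman to extract the positive principal eigenpair $(\Lambda,h)$ of $\mathcal{K}^{\Delta t}$ and the positive eigenfunction of $(\mathcal{K}^{\Delta t})^{\star}$, then conjugate to the genuine Markov kernel $\widetilde{\mathcal{K}}\phi:=\Lambda^{-1}h^{-1}\mathcal{K}^{\Delta t}(h\phi)$. The bound \eqref{MinorizationBoundedCondition} confines $h$ to $[\theta m^{\mathcal{K}}(h)/\Lambda,\,\vartheta m^{\mathcal{K}}(h)/\Lambda]$, which in turn gives the Doeblin minorization $\widetilde{\mathcal{K}}\phi(\bm x)\ge(\theta/\vartheta)\,\nu(\phi)$ for a fixed $\nu\in\mathcal{P}(\mathcal{D})$. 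Geometric ergodicity of the linear kernel $\widetilde{\mathcal{K}}$ at rate $(1-\theta/\vartheta)^M$ then follows from the standard Meyn--Tweedie/Doeblin theorem, and unwinding the conjugation yields \eqref{CovInvariantMeasure} for $\Phi_M^{\mathcal{K}^{\Delta t}}$ and shows that the density of $m_{\Delta t}$ is the principal eigenfunction of $(\mathcal{K}^{\Delta t})^{\star}$, exactly as asserted. The extension to $\mathcal{U}(T,0)$ via the last sentence of Proposition~\ref{bam} is then immediate, as in your write-up.
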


Proposition \ref{bam} and Proposition \ref{thm:existenceinvariantmeasure} the operator $\mathcal{K}^{\Delta t}$ satisfies a uniform minorization and boundedness condition and then can guarantee that $\Phi^{\mathcal{K}^{\Delta t}}_M$ admits an invariant measure. The details of the proof can be found in \cite{Junlong2022} and \cite{Meyn:92}. Then the principal eigenvalue of operator $\mathcal{K}^{\Delta t}$ satisfies the following relation:
\begin{equation}
    e^{\mu_{\Delta t}(\lambda)T} = m_{\Delta t} \mathcal{K}^{\Delta t} 1 = \Phi_M^{\mathcal{K}^{\Delta t}} (m_0) \mathcal{K}^{\Delta t} 1 + \rho_M,
    \label{PrinEigRel}
\end{equation}
where $m_{\Delta t}$ and $m_0$ are just the probability measure mentioned in Proposition \ref{thm:existenceinvariantmeasure}, $T$ is the period in time and $\rho_M$ is $O(1-\frac{\theta}{\vartheta})^M$.
Denote $m_{\Delta t}^h = \prod_{i=0}^{h-1} \Phi^{\mathcal{K}^{\Delta t,H-i}} m_{\Delta t}$, $1\leq h \leq H$. Let $e_h = (m_{\Delta t}^h) (\mathcal{K}^{\Delta t,H-h} 1)$ denotes the changing of mass. Then, we have 
\begin{equation} 
e^{\mu_{\Delta t}(\lambda)T} = \prod_{h=1}^{H } e_h, \quad \text{and} \quad  \mu_{\Delta t}(\lambda) = \frac{1}{H\Delta t} \sum_{h=0}^{H-1} \log(e_h). 
\label{formula4mu}
\end{equation}

Then we represent an important result that gives the error analysis of computing the principal eigenvalue of  $\mathcal{A}(t)$ using the particle method. Since the random fields we consider have an equispaced discrete spectrum, the following proposition remains valid. %The details of the proof can be found in \cite{Junlong2022}.

\begin{pro} [Theorem 3.11 in  \cite{Junlong2022}]
Suppose $\bm b(\bm x)$ and $c(\bm x)$ in $\mathcal{A}(\bm x)$ are bounded, smooth and periodic in each component of $\bm x$, and uniformly H\"{o}lder continuous in $t$. Let 
\begin{align}
    \mu^M_{\Delta t}(\lambda)=	(H\Delta t)^{-1}\sum_{h=0}^{H-1} \log\big( N^{-1}\sum_{l=1}^{N} 
\exp(c(\widetilde\xi_{h}^{l,M-1})\Delta t)\big),
\end{align} denote the approximate principal eigenvalue obtained by the $N$-IPS method, where $\widetilde\xi_{h}^{l,M-1}$, $h = 0,\cdots,H-1$, $l = 1,\cdots,N$, $M$ is the iteration number and $\Delta t = T/H$ is the time step. Let $\mu(\lambda)$ denote the principal eigenvalue of \eqref{linearCorrEqu} defined in Eq.\eqref{muCompFeyKac}. Then, we have the following convergence result
\begin{equation}
\lim_{N \to \infty} \frac{1}{H\Delta t}\sum_{h=0}^{H-1} \log\Big( \frac{1}{N}\sum_{l=1}^{N} 
\exp(c(\widetilde\xi_{h}^{l,M-1})\Delta t)\Big) = \mu(\lambda) + O\big((1-\frac{\theta}{\vartheta})^M\big) + O\big((\Delta t)^{\frac{1}{2}}\big),
\label{mainresult-NIPSPincipaleig}
\end{equation}
where $0<\theta<\vartheta $ are the parameters defined in the minorization and boundedness conditions in \eqref{MinorizationBoundedCondition}.
\label{Prop3.4}
\end{pro}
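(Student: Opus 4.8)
The plan is to split the total error of $\mu^M_{\Delta t}(\lambda)$ into three independent pieces and handle them in turn: (i) the Monte Carlo error as $N\to\infty$ with $M$ and $\Delta t$ held fixed; (ii) the relaxation of the $M$-fold iterated discrete Feynman--Kac semigroup $\Phi^{\mathcal K^{\Delta t}}_M$ to its invariant measure $m_{\Delta t}$; and (iii) the operator-splitting / time-discretization error relating the principal eigenvalue $\mu_{\Delta t}(\lambda)$ of $\mathcal K^{\Delta t}$ to the true principal eigenvalue $\mu(\lambda)$. Chaining the three estimates produces \eqref{mainresult-NIPSPincipaleig}.

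First I would pass to the limit $N\to\infty$. The particles $\{\widetilde\xi_h^{l,M-1}\}_{l=1}^N$ are produced by the Feynman--Kac interacting particle system of \cite{del2000branching}, with mutation kernels $Q_i^{\Delta t}$ and multinomial selection driven by the weights \eqref{MultinomialResampleWeight}. Since $\bm b$ and $c$ are bounded on the torus $\mathcal D$, the fitness $\exp(c\,\Delta t)$ is bounded above and below by positive constants, so Proposition~\ref{bam} applies and del Moral's law of large numbers for such systems gives, for each substep $h$ of the $M$-th generation,
\[
\frac1N\sum_{l=1}^N \exp\!\big(c(\widetilde\xi_h^{l,M-1})\Delta t\big)\ \longrightarrow\ e_h^{M}\qquad(N\to\infty),
\]
almost surely and in $L^1$, where $e_h^{M}$ is the deterministic change of mass at the $h$-th substep, obtained from the definition of $e_h$ given just before \eqref{formula4mu} by replacing the invariant measure $m_{\Delta t}$ with the iterate $\Phi^{\mathcal K^{\Delta t}}_{M-1}(m_0)$. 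Because these limits lie in a fixed compact subset of $(0,\infty)$ (again by the boundedness of $c$), $\log$ is Lipschitz there, the limit commutes with the outer average, and therefore $\lim_{N\to\infty}\mu^M_{\Delta t}(\lambda)=(H\Delta t)^{-1}\sum_h\log e_h^M$.

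Next I would replace $\Phi^{\mathcal K^{\Delta t}}_{M-1}(m_0)$ by the invariant measure $m_{\Delta t}$ and then trade $\mu_{\Delta t}(\lambda)$ for $\mu(\lambda)$. By Proposition~\ref{thm:existenceinvariantmeasure}, $\big\|\Phi^{\mathcal K^{\Delta t}}_{M-1}(m_0)-m_{\Delta t}\big\|_{TV}=O\big((1-\theta/\vartheta)^M\big)$. Each factor $\mathcal K^{\Delta t,i}$ is bounded in $\mathcal L(L^2(\mathcal D))$ and, by the minorization in \eqref{MinorizationBoundedCondition}, the test functions $\mathcal K^{\Delta t,H-h}1$ are bounded away from $0$; hence the map sending a measure to the associated $h$-th change of mass is Lipschitz in the total-variation norm with a constant uniform in $h$, so $|e_h^{M}-e_h|=O\big((1-\theta/\vartheta)^M\big)$ uniformly in $h$, with $e_h$ as in \eqref{formula4mu}. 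Using once more the uniform two-sided bounds on $e_h,e_h^M$, Lipschitzness of $\log$, and then the identity \eqref{formula4mu}, this gives $(H\Delta t)^{-1}\sum_h\log e_h^M=\mu_{\Delta t}(\lambda)+O\big((1-\theta/\vartheta)^M\big)$. Finally, I would invoke the operator-splitting estimate recalled at the end of Section~\ref{sec:ErrorEstimateOperatorSplit} (Theorem~3.4 of \cite{Junlong2022} and its accompanying error bound): since $\bm b$ and $c$ are uniformly Lipschitz in $\bm x$ and uniformly H\"older in $t$, the time-discretized operator-split product $\mathcal K^{\Delta t}$ converges in operator norm to the exact solution operator $\mathcal U(T,0)$, and the corresponding principal eigenvalues satisfy $\mu_{\Delta t}(\lambda)=\mu(\lambda)+O\big((\Delta t)^{1/2}\big)$, because $e^{\mu(\lambda)T}$ is the principal eigenvalue of $\mathcal U(T,0)$. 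Combining the three displays yields the claim.

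The hard part will be the first step. After the initial resampling the particles are no longer independent, so the passage $N\to\infty$ cannot rely on a naive law of large numbers and must instead be justified through the propagation-of-chaos / $L^p$-convergence theory for Feynman--Kac particle systems; moreover one must crucially exploit the uniform lower bound on the fitness---equivalently the boundedness of $c$ on $\mathcal D$, which is available here precisely because the realized random field has period $1/\Delta k$---so that the outer logarithm of the Monte Carlo average remains well-defined and bounded and the limit may be interchanged with $\log$. By contrast, the relaxation and time-discretization steps are comparatively routine, following directly from Propositions~\ref{bam}--\ref{thm:existenceinvariantmeasure} and the operator-splitting error estimate of Section~\ref{sec:ErrorEstimateOperatorSplit}.
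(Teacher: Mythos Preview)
Your proposal is correct and follows essentially the same approach that the paper outlines: the paper does not give its own proof here but simply cites Theorem~3.11 of \cite{Junlong2022}, remarking afterward that the error in \eqref{mainresult-NIPSPincipaleig} consists of the difference between the final state and the ergodic (invariant-measure) limit of the discrete system---your step~(ii), the geometric $O((1-\theta/\vartheta)^M)$ term---plus the difference between the ergodic limits of the continuous and discrete systems---your step~(iii), the $O((\Delta t)^{1/2})$ operator-splitting error. Your three-piece decomposition, with the $N\to\infty$ law of large numbers for the del~Moral particle system handled first, is precisely the argument behind the cited result, and your identification of the propagation-of-chaos step as the delicate part is accurate.
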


The standing assumption in \cite{Junlong2022} is a deterministic periodic flow while in this paper it is a random flow with an equispaced discrete spectrum. Since every random field realization at this point, still has a period, the Proposition \ref{Prop3.4} is still valid. As discussed in \eqref{NIPS-PrincipalEig} we substitute the long-time limit \eqref{muCompFeyKac} with the final state limit. This is due to the ergodicity nature of both the original system and the discrete system. Then the final error estimate in Proposition \ref{Prop3.4} consists of the difference of ergodic limits between continuous and discrete systems (generally a cell problem $O(\Delta t)$) and the difference between final state and ergodic limit in the discrete system (geometrically decreasing).  
\subsection{Error estimate in computing $\mu$ via random Fourier method}\label{sec:ErrorEstRanFourMethod}
\noindent
Here we will give some statistical error analysis on the solution operator. By the approximation to random velocity field and the operator splitting method, it has the following error in approximating the solution operator $\mathcal{U}(T, 0)$ in $L^2$ operator norm
\begin{equation}
    e_{op} \overset{\text{def}}{=} \big|\big|\mathcal{U} (T,0)q - \prod_{h=1}^H e^{\Delta t_h \tilde{\mathcal{L}} (\bm x,\omega)} e^{\Delta t_h \tilde{\mathcal{C}} (\bm x, \omega)}q \big|\big|_{L^2}.
\end{equation}
Here $\tilde{\mathcal{L}}$ and $\tilde{\mathcal{C}}$ denote the corresponding operators in \eqref{defAt} of $\tilde{\bm v}$. We can separate this error into three parts and define them by
\begin{equation}
    e_1 \overset{\text{def}}{=}  \big|\big|\mathcal{U} (T,0)q - \prod_{h=1}^H e^{\Delta t_h \mathcal{A} (\bm x,\omega)}q \big|\big|_{L^2},
\end{equation}
\begin{equation}
    e_2 \overset{\text{def}}{=} \big|\big|\prod_{h=1}^H e^{\Delta t_h \mathcal{A} (\bm x,\omega)}q - \prod_{h=1}^H e^{\Delta t_h \mathcal{L} (\bm x,\omega)} e^{\Delta t_h \mathcal{C} (\bm x,\omega)} q\big|\big|_{L^2},
\end{equation}
\begin{equation}
    e_3 \overset{\text{def}}{=} \big|\big| \prod_{h=1}^H e^{\Delta t_h \mathcal{L} (\bm x,\omega)} e^{\Delta t_h \mathcal{C} (\bm x,\omega)}q - \prod_{h=1}^H e^{\Delta t_h \tilde{\mathcal{L}} (\bm x,\omega)} e^{\Delta t_h \tilde{\mathcal{C}} (\bm x,\omega)} q \big|\big|_{L^2}.
    \label{Error3_Def}    
\end{equation}
Here $e_1, e_2$ and $e_3$ correspond to the error caused by solution operator \eqref{SoluOperatorfreetime1}, the Lie-Trotter operator splitting method, and using the random Fourier method. Then by triangle inequality, we can get that
\begin{equation}
    e_{op} \leq e_1 + e_2 + e_3 .
\end{equation}

\begin{pro}
\label{Prop3_5}
    Suppose $\bm b(\bm x, \omega)$ and $c(\bm x, \omega)$ in the operator $\mathcal{A} = \mathcal{L} +\mathcal{C}$ are bounded and smooth in a bounded domain $\mathcal{D}$, then there exists $\gamma_1$ large enough and $\gamma_2 > 0$, s.t.
    \begin{equation}
        ||\kappa \Delta_{\bm x} + (c(\bm x, \omega) - \gamma_1) \bm v||_{L^2} \geq \hat{C}_1(||\Delta_{\bm x} \bm v||_{L^2} + ||\bm v||_{L^2}),
    \end{equation}
    \begin{equation}
        ||[\mathcal{L}, \mathcal{C}] \bm v||_{L^2} \leq \hat{C}_2 ||(\mathcal{L} - \gamma_2) \bm v||_{L^2}^{\frac{1}{2}} ||\bm v||_{L^2}^{\frac{1}{2}},
    \end{equation}
    where $\hat{C}_1$ and $\hat{C}_2$ are constants and independent of $\bm x$ and $\omega$.
\end{pro}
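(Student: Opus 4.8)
The plan is to derive both inequalities from elementary energy estimates on the torus $\mathcal{D}$, using that integration by parts produces no boundary terms and that the drift $\bm b=-2\kappa\lambda\bm e+\bm v$ is divergence‑free. By continuity of both sides it suffices to take the test function (written $\bm v$ in the statement, a generic element of $H^2(\mathcal{D})$, not the velocity field) to be smooth. All constants will be tracked in terms of $\|c\|_{L^\infty}$, $\|\nabla c\|_{L^\infty}$, $\|\Delta c\|_{L^\infty}$ and $\|\bm b\|_{L^\infty}$; the assertion that $\hat{C}_1,\hat{C}_2$ are ``independent of $\omega$'' is then understood as: on the probability‑$>p$ event of Lemma \ref{Lemma3_2} these four quantities are dominated by deterministic constants, after upgrading the $L^2$ bounds of that lemma to $L^\infty$ bounds via Sobolev embedding on the fixed finite‑dimensional torus (or directly from the trigonometric‑polynomial form \eqref{VfourGen} of $\tilde{\bm v}$, all of whose derivatives are a.s.\ bounded).

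For the first inequality, write $f=\kappa\Delta_{\bm x}\bm v+(c-\gamma_1)\bm v$ and pair with $\bm v$ in $L^2(\mathcal{D})$: integration by parts gives $-\kappa\|\nabla\bm v\|_{L^2}^2+\int_{\mathcal{D}}(c-\gamma_1)|\bm v|^2=\langle f,\bm v\rangle$, i.e.\ $\kappa\|\nabla\bm v\|_{L^2}^2+\int_{\mathcal{D}}(\gamma_1-c)|\bm v|^2=-\langle f,\bm v\rangle\le\|f\|_{L^2}\|\bm v\|_{L^2}$. Choosing $\gamma_1>\|c\|_{L^\infty}$ makes $\gamma_1-c\ge\gamma_1-\|c\|_{L^\infty}>0$, whence $\|\bm v\|_{L^2}\le(\gamma_1-\|c\|_{L^\infty})^{-1}\|f\|_{L^2}$. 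Then $\kappa\Delta_{\bm x}\bm v=f-(c-\gamma_1)\bm v$ yields $\kappa\|\Delta_{\bm x}\bm v\|_{L^2}\le\|f\|_{L^2}+(\|c\|_{L^\infty}+\gamma_1)\|\bm v\|_{L^2}\le C\|f\|_{L^2}$. Adding the two bounds gives $\|\Delta_{\bm x}\bm v\|_{L^2}+\|\bm v\|_{L^2}\le C''\|f\|_{L^2}$, and taking $\hat{C}_1=1/C''$ proves the claim.

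For the second inequality, compute the commutator explicitly: since $\mathcal{C}$ is multiplication by $c$ and $\mathcal{L}=\kappa\Delta_{\bm x}+\bm b\cdot\nabla_{\bm x}$, the second‑ and zeroth‑order pieces cancel and $[\mathcal{L},\mathcal{C}]\bm v=2\kappa\,\nabla c\cdot\nabla\bm v+(\kappa\Delta c+\bm b\cdot\nabla c)\bm v$, so $\|[\mathcal{L},\mathcal{C}]\bm v\|_{L^2}\le 2\kappa\|\nabla c\|_{L^\infty}\|\nabla\bm v\|_{L^2}+C_3\|\bm v\|_{L^2}$ with $C_3=\kappa\|\Delta c\|_{L^\infty}+\|\bm b\|_{L^\infty}\|\nabla c\|_{L^\infty}$. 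It remains to control $\|\nabla\bm v\|_{L^2}$ and $\|\bm v\|_{L^2}$ by $\|(\mathcal{L}-\gamma_2)\bm v\|_{L^2}^{1/2}\|\bm v\|_{L^2}^{1/2}$. Pairing $(\mathcal{L}-\gamma_2)\bm v$ with $\bm v$ and using $\langle\bm b\cdot\nabla\bm v,\bm v\rangle=-\tfrac12\int_{\mathcal{D}}(\nabla\cdot\bm b)|\bm v|^2=0$ (incompressibility on a periodic domain) gives, for any $\gamma_2>0$, the identity $\kappa\|\nabla\bm v\|_{L^2}^2+\gamma_2\|\bm v\|_{L^2}^2=-\langle(\mathcal{L}-\gamma_2)\bm v,\bm v\rangle\le\|(\mathcal{L}-\gamma_2)\bm v\|_{L^2}\|\bm v\|_{L^2}$; hence simultaneously $\|\nabla\bm v\|_{L^2}\le\kappa^{-1/2}\|(\mathcal{L}-\gamma_2)\bm v\|_{L^2}^{1/2}\|\bm v\|_{L^2}^{1/2}$ and $\|\bm v\|_{L^2}\le\gamma_2^{-1/2}\|(\mathcal{L}-\gamma_2)\bm v\|_{L^2}^{1/2}\|\bm v\|_{L^2}^{1/2}$. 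Substituting both into the commutator bound yields $\|[\mathcal{L},\mathcal{C}]\bm v\|_{L^2}\le\hat{C}_2\|(\mathcal{L}-\gamma_2)\bm v\|_{L^2}^{1/2}\|\bm v\|_{L^2}^{1/2}$ with $\hat{C}_2=2\kappa^{1/2}\|\nabla c\|_{L^\infty}+C_3\gamma_2^{-1/2}$.

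I expect the energy estimates themselves to be routine; the one point requiring care is the uniformity of the constants in $\omega$, since the zeroth‑order terms demand $L^\infty$ — not merely $L^2$ — control of $c,\nabla c,\Delta c$ and $\bm b$. This is handled by invoking Lemma \ref{Lemma3_2}, restricting to its probability‑$>p$ event, and passing from its $L^2$ derivative bounds to $L^\infty$ bounds either through Sobolev embedding (legitimate because $\mathcal{D}$ is a fixed finite‑dimensional torus and each realization is a.s.\ $C^\infty$) or by reading the bounds directly off the finite sum \eqref{VfourGen}. Everything else reduces to integration by parts together with Cauchy–Schwarz.
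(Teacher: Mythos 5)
Your proof is correct and hinges on the same two elementary ingredients one would expect: an energy estimate obtained by pairing the operator against the test function and integrating by parts on the torus (where the drift contributes nothing because $\nabla\cdot\bm b=0$), and Cauchy--Schwarz. The paper itself delegates this proposition to Lemma~3.2 and Lemma~3.3 of the cited reference rather than proving it inline, so a line-by-line comparison is not possible from what is given here; but the surrounding text (selection of $\gamma_1,\gamma_2$ and the repeated use of $\|\nabla_{\bm x}q\|_{L^2}\le C\|\Delta_{\bm x}q\|^{1/2}_{L^2}\|q\|^{1/2}_{L^2}$ together with the $L^2$ bounds $\tilde M_3,\tilde M_4$ on $\nabla_{\bm x}c$ and $\Delta_{\bm x}c$) strongly suggests the reference argues entirely with $L^2$ bounds on the coefficients, controlling mixed products such as $\nabla_{\bm x} c\cdot\nabla_{\bm x}\bm v$ by Gagliardo--Nirenberg/Ladyzhenskaya-type interpolation, whereas you instead upgrade the coefficient bounds to $L^\infty$ and then argue directly. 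Your route is shorter and arguably more transparent, and you are right that the $L^\infty$ upgrade is the single point that needs justification: it is legitimate here because $\mathcal D$ is a fixed $d$-dimensional torus, $c$ is a.s.\ smooth (indeed a finite trigonometric polynomial once $\tilde{\bm v}$ from \eqref{VfourGen} is substituted), and Sobolev embedding on the fixed bounded domain then converts the $H^2$-type control of Lemma~\ref{Lemma3_2} into pointwise bounds on the probability-$>p$ event. What you lose relative to the $L^2$-interpolation route is that your constants $\hat C_1,\hat C_2$ will in general depend on the Sobolev constant of $\mathcal D$ and on a slightly higher-order norm of $c$ than the four moments tracked explicitly in Appendix~\ref{sec::AppendixUpperBound}; this is harmless for the qualitative statement but worth noting if one wants the constants to match the downstream error bounds.

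Two small remarks on your second inequality. First, in deriving $\|\bm v\|_{L^2}\le\gamma_2^{-1/2}\|(\mathcal L-\gamma_2)\bm v\|^{1/2}_{L^2}\|\bm v\|^{1/2}_{L^2}$ you are deliberately weakening the sharper bound $\gamma_2\|\bm v\|_{L^2}\le\|(\mathcal L-\gamma_2)\bm v\|_{L^2}$ to force the desired $\|\cdot\|^{1/2}\|\cdot\|^{1/2}$ shape; that is fine, but it is worth saying so explicitly to avoid confusion. Second, the cancellation $\langle\bm b\cdot\nabla_{\bm x}\bm v,\bm v\rangle=0$ uses both $\nabla_{\bm x}\cdot\bm v=0$ and periodicity; you do state this, but since the proposition's two inequalities have different structure (only the second one invokes $\mathcal L$ and hence the drift) it is worth signalling that incompressibility is used only for the second estimate.
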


 A detailed explanation can be found in \cite{Junlong2022} Lemma 3.2 and Lemma 3.3. From Lemma \ref{Lemma3_2}, we can derive the upper bounds for $||\bm b(\bm x, \omega)||_{L^2}$, $|c(\bm x,\omega)|$, $||\nabla_{\bm x}c(\bm x,$ $\omega)||_{L^2}$, $||\Delta_{\bm x} c(\bm x, \omega)||_{L^2}$ for a probability $p$ close to 1. Following the same approach, we can obtain the corresponding upper bounds for $||\tilde{\bm b}(\bm x, \omega)||_{L^2}$, $|\tilde{c}(\bm x,\omega)|$, $||\nabla_{\bm x}\tilde{c}(\bm x,$ $\omega)||_{L^2}$, $||\Delta_{\bm x} \tilde{c}(\bm x, \omega)||_{L^2}$.
\begin{lemma}
\label{Lemmma:Defpandupperbound}
%For any probability $p$ close to 1, we can find the upper bounds for $||\bm b(\bm x, \omega)||_{L^2}$, $|c(\bm x,\omega)|$, $||\nabla_{\bm x}c(\bm x,\omega)||_{L^2}$, $||\Delta_{\bm x} c(\bm x, \omega)||_{L^2}$ and the corresponding terms for $\tilde{\bm b}(\bm x, \omega)$, $\tilde{c}(\bm x, \omega)$. These terms will be bounded by their corresponding upper bounds $M_1^*$, $ M_2^*$, $M_3^*$, and $M_4^*$ with probability greater than $p$. To be specific, for $||\bm b(\bm x, \omega)||_{L^2}$ and $||\tilde{\bm b}(\bm x, \omega)||_{L^2}$, they will satisfy
% \begin{equation}
%      P(||\bm b(\bm x,\omega)||_{L^2} < M_{1}^*) > p \quad and \quad P(||\tilde{\bm b}(\bm x,\omega)||_{L^2} < M_{1}^*) > p.
% \end{equation}

%Similar inequalities will hold for other terms. Moreover, 
We can construct $\gamma$ by these upper bounds, and the operator $\mathcal{A}-\gamma, \tilde{\mathcal{A}} -\gamma$ will both satisfy the requirements in Proposition \ref{Prop3_5} with probability greater than $p$.
\end{lemma}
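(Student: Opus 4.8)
The plan is to exhibit a single deterministic number $\gamma$, depending only on the upper bounds $\tilde M_1,\tilde M_2,\tilde M_3,\tilde M_4$ of Lemma~\ref{Lemma3_2}, and to show that on the common high-probability event on which those bounds hold, both $\mathcal{A}-\gamma$ and $\tilde{\mathcal{A}}-\gamma$ fall into the scope of Proposition~\ref{Prop3_5}. Two observations drive the argument. First, inspecting the proof of Proposition~\ref{Prop3_5} (Lemmas~3.2--3.3 of \cite{Junlong2022}), the threshold ``$\gamma_1$ large enough'', the auxiliary $\gamma_2>0$, and the constants $\hat C_1,\hat C_2$ may be taken as explicit monotone functions of the displayed coefficient bounds $||\bm b||_{L^2}$, $|c|$, $||\nabla_{\bm x}c||_{L^2}$, $||\Delta_{\bm x}c||_{L^2}$ only; in particular $\gamma_1$ only needs to exceed a fixed multiple of $\sup|c|$ so that $-\kappa\Delta_{\bm x}+(\gamma_1-c)$ is coercive. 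Second, since a constant commutes with $\mathcal{L}$, one has $[\mathcal{L},\mathcal{C}-\gamma]=[\mathcal{L},\mathcal{C}]$, so the second inequality of Proposition~\ref{Prop3_5} is untouched by the shift $\mathcal{A}\mapsto\mathcal{A}-\gamma$, and only the first inequality imposes a genuine lower bound on $\gamma$.

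The steps are as follows. (i) Fix the event $\Omega_p$ on which the four bounds of Lemma~\ref{Lemma3_2} hold simultaneously for $\bm b(\cdot,\omega)$ and $c(\cdot,\omega)$; by intersecting the four individual events (starting, if necessary, from probability $p^{1/4}$ for each, or mildly enlarging the $\tilde M_i$) we keep $\mathbb{P}(\Omega_p)>p$. (ii) Verify that on the \emph{same} event $\Omega_p$ the approximate coefficients $\tilde{\bm b}(\cdot,\omega),\tilde c(\cdot,\omega)$ obey the same bounds up to a harmless constant: writing $\bm v=\bm v_{\mathrm{det}}+\bm v_{\mathrm{rand}}$ and $\tilde{\bm v}=\bm v_{\mathrm{det}}+\tilde{\bm v}_{\mathrm{rand}}$ with $\tilde{\bm v}_{\mathrm{rand}}$ the partial Fourier sum \eqref{vapp} of $\bm v_{\mathrm{rand}}$ (same random coefficients $\Delta\tilde{W}_j$), Bessel's inequality gives $||\partial^{\alpha}\tilde{\bm v}_{\mathrm{rand}}||_{L^2(\mathcal{D})}\le ||\partial^{\alpha}\bm v_{\mathrm{rand}}||_{L^2(\mathcal{D})}$ for every multi-index $\alpha$, and $\bm v_{\mathrm{det}}$ is common; since $\tilde c=\kappa\lambda^2-\lambda\tilde{\bm v}\cdot\bm e+f'(0)$ and $\tilde{\bm b}=-2\kappa\lambda\bm e+\tilde{\bm v}$, the bounds $\tilde M_1,\dots,\tilde M_4$ transfer to $\tilde{\bm b},\tilde c$ on $\Omega_p$. (iii) Apply Proposition~\ref{Prop3_5} to $\mathcal{A}$ and to $\tilde{\mathcal{A}}$ separately on $\Omega_p$; by the first observation the resulting thresholds and constants may be taken identical since they depend only on $\tilde M_1,\dots,\tilde M_4$. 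Set $\gamma:=\max\{\gamma_1^{\mathcal{A}},\gamma_1^{\tilde{\mathcal{A}}},1\}$, a deterministic quantity built out of the upper bounds. (iv) Conclude: on $\Omega_p$, i.e.\ with probability $>p$, both $\mathcal{A}-\gamma$ and $\tilde{\mathcal{A}}-\gamma$ satisfy the first inequality of Proposition~\ref{Prop3_5} (because $\gamma+\gamma_1$ exceeds $\sup|c|$, resp.\ $\sup|\tilde c|$) and the second inequality (unchanged by the constant shift).

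The main obstacle is step (ii), specifically transferring the \emph{pointwise} control of $c$ — the $L^\infty$-type bound hidden in ``$|c(\bm x)|\le M_2$'' — to the truncated field, since Fourier truncation can increase sup norms (Gibbs phenomenon) while Bessel's inequality only controls $L^2$ norms. I expect to circumvent this either by carrying the whole argument with the $L^2$ norm of $c$ (which is all that the G\aa rding-type estimate behind Proposition~\ref{Prop3_5} actually uses), or, if an $L^\infty$ bound is genuinely needed, by estimating $||\tilde{\bm v}_{\mathrm{rand}}||_{L^\infty(\mathcal{D})}\le\sum_{|j|\le N_F}E^{1/2}(|j|\Delta k)\,|\Delta\tilde{W}_j|\le\sum_{j}E^{1/2}(|j|\Delta k)\,|\Delta\tilde{W}_j|$ and controlling this convergent series in probability by Markov's inequality together with properties P3--P4 on $E$ — the very mechanism already used to prove Lemma~\ref{Lemma3_2}. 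A minor bookkeeping point is keeping $\mathbb{P}(\Omega_p)>p$ after intersecting the four events, handled by the usual $\varepsilon$-splitting; everything else is immediate from \cite{Junlong2022} and Lemma~\ref{Lemma3_2}.
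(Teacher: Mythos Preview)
Your proposal is essentially sound but takes a more elaborate route than the paper. The paper does not try to show that the bounds on $(\bm b,c)$ transfer to $(\tilde{\bm b},\tilde c)$ on the \emph{same} event $\Omega_p$ via Bessel's inequality. Instead it simply re-runs the argument of Lemma~\ref{Lemma3_2} for $\tilde{\bm v}$ independently, obtaining separate bounds $\tilde M_1',\dots,\tilde M_4'$ on separate high-probability events, and then sets $M_i^*=\max(\tilde M_i,\tilde M_i')$. With these common deterministic bounds it invokes Lemmas~3.2--3.3 of \cite{Junlong2022} twice to produce $(\gamma_1,\gamma_2)$ and $(\tilde\gamma_1,\tilde\gamma_2)$, sets $\tilde\gamma=\max(\gamma_1,\gamma_2,\tilde\gamma_1,\tilde\gamma_2)$, and finally $\gamma=\tilde\gamma+M_2^*$. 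The extra $M_2^*$ and the splitting $\mathcal{L}_\gamma=\mathcal{L}-\tilde\gamma$, $\mathcal{C}_\gamma=\mathcal{C}-M_2^*$ are there to ensure $\|e^{\Delta t\mathcal{L}_\gamma}\|_{L^2}\le 1$ and $\|e^{\Delta t\mathcal{C}_\gamma}\|_{L^2}\le 1$, a structural feature used downstream in Theorem~\ref{Thm::e3_5} that your construction $\gamma=\max\{\gamma_1^{\mathcal{A}},\gamma_1^{\tilde{\mathcal{A}}},1\}$ omits.

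Your Bessel route is attractive because it would give control on a single event, but the obstacle you flag in step~(ii) is real: the coercivity estimate behind Proposition~\ref{Prop3_5} uses $\gamma_1\ge\sup|c|$ (the paper takes $\gamma_1=\tfrac{2\tilde M_1^2}{\kappa}+\tilde M_2$), so a genuine $L^\infty$ bound on $c$ is needed and your option~(a) of downgrading to $L^2$ is unlikely to close. Your option~(b)---controlling $\sum_j E^{1/2}(k_j)|\Delta\tilde W_j|$ in probability---works but produces a \emph{new} high-probability event for $\tilde{\bm v}$, at which point you have essentially rejoined the paper's simpler argument of treating $\bm v$ and $\tilde{\bm v}$ symmetrically. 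Also note that $\gamma_2$ is a shift on $\mathcal{L}$ (to make $\mathcal{L}-\gamma_2$ coercive), not merely a byproduct of the commutator identity, so it must still enter your final $\gamma$.
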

\begin{proof}

From the analysis in Appendix \ref{sec::AppendixUpperBound}, we have obtained the upper bounds corresponding to the terms comprising $\bm b(\bm x,\omega)$ and $c(\bm x,\omega)$. So for any probability $p$ close to 1, we can find the number $\tilde{M}_{1}$, $\tilde{M}_{1}'$, s.t. 
\begin{equation}
    P(||\bm b(\bm x,\omega)||_{L^2} < \tilde{M}_{1}) > p \quad and \quad P(||\tilde{\bm b}(\bm x,\omega)||_{L^2} < \tilde{M}_{1}') > p.
    \label{CDFsense}
\end{equation}

Then we take $M_1^* = \max(\tilde{M}_{1}, \tilde{M}_{1}')$. Following the same idea, we can get the upper bounds $M_2^*, M_3^*$ and $M_4^*$ which correspond to $|c(\bm x,\omega)|, ||\nabla_{\bm x} c(\bm x,\omega)||_{L^2}$ and $||\Delta_{\bm x} c(\bm x,\omega)||_{L^2}$. Following the idea in \cite{Junlong2022} Lemma 3.2 and Lemma 3.3, for $\big(\bm b(\bm x,\omega)$, $c(\bm x,\omega)\big)$ and $\big(\tilde{\bm b}(\bm x,\omega)$, $\tilde{c}(\bm x,\omega)\big)$ , we can respectively find $(\gamma_1, \gamma_2)$ and $(\tilde{\gamma}_{1}, \tilde{\gamma}_{2})$. We take $\tilde\gamma = \max(\gamma_1, \gamma_{2}, \tilde{\gamma}_{1}, \tilde{\gamma}_{2})$ and take $\gamma = \tilde \gamma + M_2^*$ where $|c(\bm x,\omega)| < M_{2}^*$ and $|\tilde{c}(\bm x,\omega)| < M_{2}^*$ are satisfied with probability greater than $p$. Let $\mathcal{U}_{\gamma} (t, s) = e^{-\gamma(t-s)} \mathcal{U}(t,s)$ be the solution operator which corresponds to the parabolic equation \eqref{freetime1} with $\mathcal{A}_{\gamma} = \mathcal{A}_{\gamma} (\bm x,\omega) = \mathcal{A}(\bm x,\omega) - \gamma$, $\mathcal{L}_{\gamma}= \mathcal{L}_{\gamma}(\bm x,\omega) = \mathcal{L}(\bm x,\omega) - \tilde \gamma$ and $\mathcal{C}_{\gamma}=\mathcal{C}_{\gamma}(\bm x,\omega) = \mathcal{C}(\bm x,\omega) - M_2^*$. Similar for $\tilde{\mathcal{A}}(\bm x, \omega), \tilde{\mathcal{L}}(\bm x,\omega)$ and $\tilde{\mathcal{C}}(\bm x,\omega)$, we can obtain $\tilde{\mathcal{A}}_{\gamma}.\tilde{\mathcal{L}}_{\gamma}$ and $\tilde{\mathcal{C}}_{\gamma}$. 
\end{proof}

The error estimate of $e_1$ and $e_2$ has been proved in \cite{Junlong2022} and shown in Proposition \ref{Prop3.4}. Now we would like to give the error estimate of $e_3$.
\begin{thm}
\label{Thm::e3_5}
    For the probability $p$ defined in Lemma \ref{Lemmma:Defpandupperbound} and error $e_3$ from the random Fourier method and defined in \eqref{Error3_Def}, i.e.
    $$e_3 = \big|\big| \prod_{h=1}^H e^{\Delta t_h \mathcal{L} (\bm x,\omega)} e^{\Delta t_h \mathcal{C} (\bm x,\omega)}q - \prod_{h=1}^H e^{\Delta t_h \tilde{\mathcal{L}} (\bm x,\omega)} e^{\Delta t_h \tilde{\mathcal{C}} (\bm x,\omega)} q \big|\big|_{L^2}.$$ 
    
    Define event $B_1 = $ 
    \begin{equation}
    \Big\{ e_3 \leq Te^{\gamma T} \Big(||\bm v - \tilde{\bm v}||_{L^2}\cdot \big(||\nabla_{\bm x} q||_{L^2} + \lambda ||q||_{L^2}\big) + (\Delta t) \big(||\bm v - \tilde{\bm v}||_{L^2}\cdot||\nabla_{\bm x} q||_{L^2} + ||q||_{L^2}\big)\Big)\Big \},
    \label{Thm::e3_34}
    \end{equation}
    then $B_1$ holds with probability greater than $p$, i.e. $prob(B_1) > p$.
\end{thm}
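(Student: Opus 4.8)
The plan is to compare the two operator-splitting products factor by factor through a telescoping identity, to control each surviving factor by a uniform bounded-semigroup estimate valid on the event of Lemma~\ref{Lemmma:Defpandupperbound}, and to estimate the single-step discrepancy with Duhamel's formula, exploiting that the two generators differ only through $\bm v-\tilde{\bm v}$.

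Write $A_h = e^{\Delta t_h\mathcal{L}(\bm x,\omega)}e^{\Delta t_h\mathcal{C}(\bm x,\omega)}$ and $\tilde A_h = e^{\Delta t_h\tilde{\mathcal{L}}(\bm x,\omega)}e^{\Delta t_h\tilde{\mathcal{C}}(\bm x,\omega)}$, so that $e_3 = \|(\prod_{h=1}^H A_h - \prod_{h=1}^H\tilde A_h)q\|_{L^2}$. First I would use the telescoping decomposition
\begin{equation}
\prod_{h=1}^H A_h - \prod_{h=1}^H \tilde A_h = \sum_{k=1}^H \Big(\prod_{h=1}^{k-1}\tilde A_h\Big)\,(A_k - \tilde A_k)\,\Big(\prod_{h=k+1}^H A_h\Big).
\end{equation}
On the event of Lemma~\ref{Lemmma:Defpandupperbound}, which has probability greater than $p$, the shifted generators $\mathcal{L}-\tilde\gamma$, $\mathcal{C}-M_2^*$ and their tilded analogues generate uniformly bounded (contraction-type) semigroups, so $\|e^{\Delta t_h\mathcal{L}}\|_{L^2},\|e^{\Delta t_h\mathcal{C}}\|_{L^2}\le e^{\gamma\Delta t_h}$; hence every partial product appearing above has $L^2$-operator norm at most $e^{\gamma T}$. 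It then remains to bound $\|(A_k-\tilde A_k)\psi_k\|_{L^2}$ for the intermediate function $\psi_k = (\prod_{h=k+1}^H A_h)\,q$, for which, again on the same event and by parabolic smoothing together with the uniform bounds on $\bm b,c$ from Lemma~\ref{Lemma3_2}, one has $\|\psi_k\|_{L^2}\le C\|q\|_{L^2}$ and $\|\nabla_{\bm x}\psi_k\|_{L^2}\le C\|\nabla_{\bm x}q\|_{L^2}$ with constants of $\gamma T$ type.

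Next I would write $A_k-\tilde A_k = (e^{\Delta t_k\mathcal{L}}-e^{\Delta t_k\tilde{\mathcal{L}}})e^{\Delta t_k\mathcal{C}} + e^{\Delta t_k\tilde{\mathcal{L}}}(e^{\Delta t_k\mathcal{C}}-e^{\Delta t_k\tilde{\mathcal{C}}})$ and apply Duhamel to each bracket,
\begin{equation}
e^{\Delta t_k\mathcal{L}}-e^{\Delta t_k\tilde{\mathcal{L}}} = \int_0^{\Delta t_k} e^{(\Delta t_k-s)\mathcal{L}}\,(\mathcal{L}-\tilde{\mathcal{L}})\,e^{s\tilde{\mathcal{L}}}\,ds,
\end{equation}
and similarly for $\mathcal{C}-\tilde{\mathcal{C}}$. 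Since $\mathcal{L}-\tilde{\mathcal{L}} = (\bm v-\tilde{\bm v})\cdot\nabla_{\bm x}$ and $\mathcal{C}-\tilde{\mathcal{C}} = -\lambda(\bm v-\tilde{\bm v})\cdot\bm e$, the first bracket contributes a term of size $\Delta t_k\,e^{\gamma T}\,\|\bm v-\tilde{\bm v}\|_{L^2}\,\|\nabla_{\bm x}q\|_{L^2}$ and the second a term of size $\Delta t_k\,e^{\gamma T}\,\lambda\,\|\bm v-\tilde{\bm v}\|_{L^2}\,\|q\|_{L^2}$; the residual pieces $(\Delta t)(\|\bm v-\tilde{\bm v}\|_{L^2}\|\nabla_{\bm x}q\|_{L^2}+\|q\|_{L^2})$ arise when one replaces the Duhamel integrand by its endpoint value, i.e. from the variation of $e^{(\Delta t_k-s)\mathcal{L}}$ and $e^{s\tilde{\mathcal{L}}}$ over the small interval and from commuting the perturbation past the parabolic flow. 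Summing the $H$ telescoping terms turns $\sum_k\Delta t_k$ into $T$, producing the prefactor $Te^{\gamma T}$ and the bound defining $B_1$. Since every inequality above was obtained on the Lemma~\ref{Lemmma:Defpandupperbound} event, $prob(B_1)>p$.

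The step I expect to be the main obstacle is propagating the $H^1$-type control of $q$ through the intermediate operators — keeping $\|\nabla_{\bm x}\psi_k\|_{L^2}$ bounded uniformly in $k$ and in the realization $\omega$ — and cleanly extracting the extra $\Delta t$ factor when commuting the first-order term $(\bm v-\tilde{\bm v})\cdot\nabla_{\bm x}$ past $e^{s\tilde{\mathcal{L}}}$ and $e^{s\mathcal{C}}$; this is exactly where the parabolic regularity estimates and the uniform a priori bounds from Lemmas~\ref{Lemma3_2} and~\ref{Lemmma:Defpandupperbound} must be used.
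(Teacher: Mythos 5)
Your proposal follows essentially the same route as the paper: telescope the operator-splitting product and control the surviving factors by the $\gamma$-shifted contraction estimates from Lemma~\ref{Lemmma:Defpandupperbound}, then bound the single-step discrepancy via Duhamel's (freezing-coefficient) formula using $\mathcal{L}-\tilde{\mathcal{L}}=(\bm v-\tilde{\bm v})\cdot\nabla_{\bm x}$ and $\mathcal{C}-\tilde{\mathcal{C}}=-\lambda(\bm v-\tilde{\bm v})\cdot\bm e$. The only cosmetic difference is that you split $A_k-\tilde A_k$ by a cross-term decomposition while the paper Taylor-expands $e^{\Delta t\mathcal{C}_\gamma}$, $e^{\Delta t\tilde{\mathcal{C}}_\gamma}$ and tracks the residuals $r_1,r_1'$; the concern you flag about propagating $H^1$-type control through the intermediate functions $\psi_k$ is legitimate, and the paper treats it implicitly by dropping the intermediate propagators acting on $q$ when passing to~\eqref{ErrEst3part}.
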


 Detailed proof of Theorem \ref{Thm::e3_5} can be seen from Appendix \ref{Sec::AppendixTheorem3_5}. Now we look back at the total error estimate in a stochastic sense. From \eqref{b_rxL2normApp} to \eqref{UpperBoundforAss3Ass4}, we can get these terms' upper bounds distribution estimation. For any $p$ close to 1, we can always find their corresponding upper bounds' so that they are bounded by them with a probability greater than $p$. Following the selection method for $\gamma_1$ in \cite{Junlong2022}, we take $\gamma_1 = \frac{2 (\tilde{M}_{1})^2}{\kappa} + \tilde{M}_{2}$ and also require $\gamma_1$ should be large enough so that $4 (\frac{\gamma_1 - \tilde{M}_{2}}{3})^{\frac{3}{4}} \kappa^{\frac{1}{4}} > 2\kappa \tilde{M}_{3} C$. Here $C$ is defined by the fact that
\begin{equation}
    ||\nabla_{\bm x} q||_{L^2} \leq C ||\Delta_{\bm x} q||_{L^2}^{\frac{1}{2}} ||q||_{L^2}^{\frac{1}{2}}.
\end{equation}

For $\gamma_2$, it should be selected large enough so that 
\begin{equation}
    \big|\big|(\mathcal{L}(\bm x,\omega) - \gamma_2)q \big|\big|_{L^2} \geq \hat{C}(\kappa, \bm b) (||\Delta_{\bm x} q ||_{L^2} + ||q||_{L^2}),
\end{equation}
here $\hat{C}(\kappa, \bm b)$ should satisfy
$\hat{C}(\kappa, \bm b) \geq \max(\frac{16\kappa^2 C^2 \tilde{M}_{3}^2}{C_H^2}, \frac{4(\kappa \tilde{M}_{4} + \tilde{M}_{1}\tilde{M}_{3})^2}{C_H^2})$ and $C_H$ is the uniformly H\"{o}lder continuous coefficient. Similarly, we can select $\tilde{\gamma}_{1}$ and $\tilde{\gamma}_{2}$ according to $\tilde{\bm b}(\bm x,\omega)$ and $\tilde{c}(\bm x,\omega)$. Then we take $\tilde \gamma = \max(\gamma_1, \gamma_{2}, \tilde{\gamma}_{1}, \tilde{\gamma}_{2})$ and $\gamma = \tilde \gamma + M_2^*$. Then following the proof of $e_1$ in \cite{Junlong2022}, it can be proved that
\begin{equation}
    e_1 \leq Te^{\gamma T} \cdot \frac{C_s}{\beta + \frac{1}{2}} (\Delta t)^{\beta - \frac{1}{2}} ||q||_{L^2},
\end{equation}
here $C_s$ is the constant defined by the infinitesimal generator $\mathcal{A}$ and its corresponding analytical semigroup $e^{t \mathcal{A}}$ which satisfies
\begin{equation}
    ||\mathcal{A} e^{t\mathcal{A}} ||_{L^2} \leq \frac{C_s}{t},\quad t > 0.
\end{equation}

Then following the proof of $e_2$ in \cite{Junlong2022}, it can be proved that 
\begin{equation}
    e_2 \leq Te^{\gamma T} \cdot (\frac{4 C_s}{3} (\Delta t)^{\frac{1}{2}} + (M_2^*)^2 (\Delta t)) ||q||_{L^2}.
\end{equation}

Finally following the proof of $e_3$ mentioned above, we can obtain that
\begin{equation}
    e_3 \leq Te^{\gamma T} \cdot \Big(||\bm v - \tilde{\bm v}||_{L^2}\cdot \big(||\nabla_{\bm x} q||_{L^2} + \lambda ||q||_{L^2}\big) + (\Delta t) \big(||\bm v - \tilde{\bm v}||_{L^2}\cdot||\nabla_{\bm x} q||_{L^2} + ||q||_{L^2}\big)\Big).
\end{equation}

Now we consider one realization of $\tilde{\bm v}(\bm x,\omega_0) = \tilde{\bm v}_{0}(\bm x)$ with an initial Fourier mode number $N_{0}$ and frequency step $\Delta k$. Then we can construct a sequence $\{\bm v_{j}(\bm x)\}_{j=0}^{\infty}$ that each time we double the Fourier mode number, i.e. for $\bm v_{j}$, it will have a Fourier mode number $N_{j} = 2^j N_{0}$. During the construction process of each term in one realization, we keep the values of the random variables at the previously existing truncation points unchanged and add a subdivision based on them. Due to the exponential decay performance of $E(k)$, the amplitude of the high-frequency term will exponentially decay to zero. Then we treat $\bm v(\bm x) = \lim_{j\to \infty} \bm v_{ j}(\bm x)$ as the corresponding realization of real random velocity field $\bm v(\bm x)$. 
Now we look back to the error estimate of the correlation function of these series. From the analysis for \eqref{ErrEstRx}%and \eqref{ErrEstedis} %\eqref{asymptoExpan}
, we know that the error of correlation function $e_{R}$ is $O((N_j)^{-2})$ in this case. Due to the construction strategy mentioned, $N_{j} = 2^j N_{0}$, then the estimate of $e_{R}$ will become $O(2^{-2j})$. From this, among several different realizations, we can further give the following estimate for $\mathbb{E}_{\omega}[||\bm v(\bm x, \omega) - \bm v_{j}(\bm x, \omega)||_{L^2}]$.
\begin{align}
    &\mathbb{E}_{\omega}[|| \bm v(\bm x, \omega) - \bm v_{j}(\bm x, \omega)||^2_{L^2}] = \mathbb{E}_{\omega}[\int_{\mathcal{D}} |\bm v(\bm x,\omega) - \bm v_{j} (\bm x,\omega)|^2 d\bm x] 
    \notag \\
    %&\leq L \cdot \mathbb{E}_{\omega}[\int_{-\infty}^{+\infty} |v_{r}(x,\omega) - v_{F,j} (x,\omega)|^2 dx] \notag \\
    &= \int_{\mathcal{D}} \big(\mathbb{E}_{\omega}[\bm v(\bm x,\omega)^2] - \mathbb{E}_{\omega}[\bm v_{j}(\bm x,\omega)^2] -2 \mathbb{E}_{\omega}[(\bm v(\bm x,\omega) - \bm v_{j}(\bm x,\omega))\cdot \bm v_{j}(\bm x,\omega)] \big) d\bm x\notag \\
    %&\quad - 2 \int_{-\infty}^{+\infty} \mathbb{E}_{\omega}[(v_{r}(x,\omega) - v_{F,j}(x,\omega))\cdot v_{F,j}(x,\omega)] dx 
    & \leq \Big|\int_{\mathcal{D}}R(0) - \tilde{R}_{j}(0) - 2 \mathbb{E}_{\omega}[(\bm v(\bm x,\omega) - \bm v_{j}(\bm x,\omega))\cdot \bm v_{j}(\bm x,\omega)] d\bm x\Big|,
\end{align}
where $\tilde{R}_{j}(\cdot)$ is the correlation function of the generated velocity field $\bm v_{j}$. We define $ \bm v_{res}^{(j)} = \bm v -  \bm v_{j}$ and interpret $\bm v_{res}^{(j)}$ as $\bm v$ subtracting its components at the selected Fourier modes and the components at other Fourier modes will still form a mean zero random field which is independent of $\bm v_{j}$. Since $\mathcal{D}$ is a bounded domain, we use $S$ to denote its area and obtain that 
\begin{equation}
    \mathbb{E}_{\omega}[||\bm v(\bm x, \omega) - \bm v_{j}(\bm x, \omega)||^2_{L^2}] \leq S \cdot |R(0) - \tilde{R}_{j}(0)| = S \cdot e_{R,j},
    \label{ErrestforEwvr_vjL2}
\end{equation}
% need to be generalized
where $e_{R,j} \overset{\text{def}}{=} |R(0) - \tilde{R}_{j}(0)| $, is the error between two velocity fields' corresponding correlation functions. Then we can further know that $\mathbb{E}_{\omega}[||\bm v(\bm x, \omega) - \bm v_{j}(\bm x, \omega)||^2_{L^2}]$ will have a convergence rate $O(2^{-2j})$. It also means that $\mathbb{E}_{\omega}[||\bm v(\bm x, \omega) - \bm v_{j}(\bm x, \omega)||_{L^2}]$ will at least have a convergence rate $O(2^{-j}) = O((N_j)^{-1})$ and its variance $Var(||\bm v(\bm x,$ $ \omega) - \bm v_{j}(\bm x, \omega)||_{L^2})$ will also converge to zero with a convergence rate at least $O(2^{-2j}) = O((N_j)^{-2})$. In the above analysis, we did not make further requirements on realization, thus this convergence rate is available for arbitrary realizations. By following the perturbation theory in \cite{kato2013perturbation}, we can obtain that 

\begin{thm}
    In each realization, the corresponding velocity field $\{ \bm v_{j}\}_{j=0}^{\infty}$ and $\bm v$ will define their corresponding operator $\{e^{\Delta t \mathcal{L}_{\gamma}^{j} (\bm x)}e^{\Delta t \mathcal{C}_{\gamma}^{j} (\bm x)}  \}_{j=0}^{\infty}$ and $e^{\Delta t \mathcal{L}_{\gamma} (\bm x)}e^{\Delta t \mathcal{C}_{\gamma} (\bm x)}$, then for sufficiently large $j$, 
    $$e^{\Delta t \mathcal{L}_{\gamma}^{j} (\bm x)}e^{\Delta t \mathcal{C}_{\gamma}^{j} (\bm x)} \to e^{\Delta t \mathcal{L}_{\gamma} (\bm x)}e^{\Delta t \mathcal{C}_{\gamma} (\bm x)}\ in\ the\  generalized\ sense.$$ 
    
    Also for sufficiently large $j$, the principal eigenvalues of $e^{\Delta t \mathcal{L}_{\gamma}^{j} (\bm x)}e^{\Delta t \mathcal{C}_{\gamma}^{j} (\bm x)} $ and $ e^{\Delta t \mathcal{L}_{\gamma} (\bm x)}e^{\Delta t \mathcal{C}_{\gamma} (\bm x)}$ is close, their difference is small in the sense of \ref{PerbCtsEigTx}. 
    \label{Thm::ConvOfPrinEigofRFM}
\end{thm}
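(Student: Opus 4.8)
The plan is to deduce this from three ingredients already in place: the $L^{2}$-convergence of the truncated velocity fields in \eqref{ErrestforEwvr_vjL2}, the operator estimate underlying Theorem \ref{Thm::e3_5}, and the Kato stability theory for a finite system of eigenvalues that was used in Proposition \ref{EigExistforrandomperturbation1}. Concretely, I would (i) turn the field convergence into operator-norm convergence of the one-step split maps $e^{\Delta t\mathcal{L}_{\gamma}^{j}}e^{\Delta t\mathcal{C}_{\gamma}^{j}}$, (ii) observe that for bounded operators norm convergence implies generalized convergence, and (iii) invoke the isolatedness of the principal eigenvalue together with Proposition \ref{EigExistforrandomperturbation1} to conclude eigenvalue convergence.

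\textbf{Step 1 (field convergence $\Rightarrow$ operator convergence).} First I would strengthen \eqref{ErrestforEwvr_vjL2} to the sup norm: writing $\bm v-\bm v_{j}$ as the Fourier tail $\sum_{|k_{l}|>N_{j}}\sqrt{2E(k_{l})\Delta k}\,\big(\zeta_{l}\cos(2\pi k_{l}x)+\eta_{l}\sin(2\pi k_{l}x)\big)$ and using properties P1--P4 of $E$, one gets $\mathbb{E}_{\omega}\|\bm v-\bm v_{j}\|_{L^{\infty}(\mathcal{D})}=O(N_{j}^{-1/2})=O(2^{-j/2})$, so by Markov's inequality and Borel--Cantelli there is an event of probability at least $p$ (as in Lemma \ref{Lemmma:Defpandupperbound}) on which $\|\bm v-\bm v_{j}\|_{L^{\infty}}\to0$. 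On this event I would estimate the one-step difference through a Duhamel expansion,
\[
e^{\Delta t\mathcal{L}_{\gamma}^{j}}-e^{\Delta t\mathcal{L}_{\gamma}}=\int_{0}^{\Delta t}e^{(\Delta t-s)\mathcal{L}_{\gamma}}\big((\bm v_{j}-\bm v)\cdot\nabla_{\bm x}\big)e^{s\mathcal{L}_{\gamma}^{j}}\,ds ,
\]
combined with the analytic-semigroup smoothing bound $\|\nabla_{\bm x}e^{s\mathcal{L}_{\gamma}^{j}}\|_{L^{2}\to L^{2}}\le C s^{-1/2}$ (uniform in $j$ by the uniform bounds on the drifts from Lemma \ref{Lemmma:Defpandupperbound}) and the pointwise bound $|e^{\Delta t c_{j}}-e^{\Delta t c}|\le C\,\Delta t\,\lambda\,\|\bm v-\bm v_{j}\|_{L^{\infty}}$ for the multiplication factor. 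Collecting terms gives
\[
\big\|e^{\Delta t\mathcal{L}_{\gamma}^{j}}e^{\Delta t\mathcal{C}_{\gamma}^{j}}-e^{\Delta t\mathcal{L}_{\gamma}}e^{\Delta t\mathcal{C}_{\gamma}}\big\|_{L^{2}(\mathcal{D})}\le C e^{\gamma\Delta t}\big(\sqrt{\Delta t}+\lambda\,\Delta t\big)\,\|\bm v-\bm v_{j}\|_{L^{\infty}(\mathcal{D})}\xrightarrow[j\to\infty]{}0 ,
\]
which is consistent with the $e_{3}$-estimate of Theorem \ref{Thm::e3_5} read with $H=1$, the gradient there being absorbed into the smoothing.

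\textbf{Steps 2--3 (generalized convergence and eigenvalue convergence).} Since both $e^{\Delta t\mathcal{L}_{\gamma}^{j}}e^{\Delta t\mathcal{C}_{\gamma}^{j}}$ and $e^{\Delta t\mathcal{L}_{\gamma}}e^{\Delta t\mathcal{C}_{\gamma}}$ are bounded on $L^{2}(\mathcal{D})$, the gap distance between their graphs is controlled by their operator-norm distance \cite[Ch.~IV, \S2]{kato2013perturbation}, so Step 1 yields generalized convergence on the probability-$p$ event, which is the first assertion. For the eigenvalues, the operator $e^{\Delta t\mathcal{L}_{\gamma}}e^{\Delta t\mathcal{C}_{\gamma}}$ is compact and positivity-improving (parabolic smoothing on the torus), hence by Krein--Rutman and \cite{hess1991periodic} its principal eigenvalue is real, simple and strictly dominant, thus isolated, forming a finite system $\Sigma'$ separated from the rest of the spectrum by a spectral gap. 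Applying Proposition \ref{EigExistforrandomperturbation1} (Kato's stability of a finite system of eigenvalues under a perturbation small in the generalized sense), once $j$ is large enough that the norm distance in Step 1 is below that gap, the principal eigenvalue of $e^{\Delta t\mathcal{L}_{\gamma}^{j}}e^{\Delta t\mathcal{C}_{\gamma}^{j}}$ stays near that of $e^{\Delta t\mathcal{L}_{\gamma}}e^{\Delta t\mathcal{C}_{\gamma}}$, the difference being small in the sense of \ref{PerbCtsEigTx}; applying the logarithmic rescaling transfers the conclusion to the corresponding $\mu$-values.

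\textbf{Main obstacle.} The delicate point is Step 1. Theorem \ref{Thm::e3_5} is stated for a fixed $q$ and its bound contains $\|\nabla_{\bm x}q\|$, which is unbounded on the unit ball of $L^{2}(\mathcal{D})$, so it does not directly give an operator-norm estimate; converting it into one requires absorbing the gradient into the smoothing of the analytic semigroup $e^{s\mathcal{L}_{\gamma}}$ and, in turn, controlling $\bm v-\bm v_{j}$ in $L^{\infty}(\mathcal{D})$ rather than merely in $L^{2}(\mathcal{D})$. This is exactly where the tail-summability coming from P1--P4 and the probability-$p$ truncation of Lemma \ref{Lemmma:Defpandupperbound} must be handled carefully; with the operator-norm convergence in hand, Steps 2--3 are routine perturbation theory.
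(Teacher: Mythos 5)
Your proof is correct in outline but takes a genuinely different route from the paper on the key step. The paper does not pass through operator-norm convergence at all: it defines $\mathcal{I}(j) = e^{\Delta t\mathcal{L}_{\gamma}^{j}}e^{\Delta t\mathcal{C}_{\gamma}^{j}} - e^{\Delta t\mathcal{L}_{\gamma}}e^{\Delta t\mathcal{C}_{\gamma}}$, reads off from the $e_{3}$ computation a bound $\|\mathcal{I}(j)q\|_{L^{2}}\le a_{j}\|q\|_{L^{2}}+b_{j}\|\nabla_{\bm x}q\|_{L^{2}}$ with $a_{j},b_{j}\to 0$, then asserts $\|\nabla_{\bm x}q\|_{L^{2}}\lesssim\|(\mathcal{A}-\gamma)q\|_{L^{2}}\lesssim C_{J}\|e^{\Delta t\mathcal{L}_{\gamma}}e^{\Delta t\mathcal{C}_{\gamma}}q\|_{L^{2}}$ and invokes the $T$-bounded-perturbation criterion (Proposition~\ref{SuffCondGenCon}) to get generalized convergence directly from this relative bound. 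Eigenvalue continuity then follows from Propositions~\ref{PerbCtsEigTx} and~\ref{RelaPerOpandEig} under a spectral-gap assumption. You instead upgrade the velocity convergence from $L^{2}$ to $L^{\infty}$ (via the tail summability of $E(k)$, Markov and Borel--Cantelli) and absorb $\nabla_{\bm x}$ into the $s^{-1/2}$ smoothing of the analytic semigroup, obtaining genuine operator-norm convergence, and then use Krein--Rutman to isolate the principal eigenvalue before applying Proposition~\ref{EigExistforrandomperturbation1}. Your objection in the ``Main obstacle'' paragraph is in fact well taken: since $e^{\Delta t\mathcal{L}_{\gamma}}e^{\Delta t\mathcal{C}_{\gamma}}$ is a \emph{bounded} operator, the paper's claimed inequality $\|\nabla_{\bm x}q\|_{L^{2}}\le C_{J}\|e^{\Delta t\mathcal{L}_{\gamma}}e^{\Delta t\mathcal{C}_{\gamma}}q\|_{L^{2}}$ cannot hold uniformly over all $q$, and for bounded operators generalized convergence coincides with norm convergence anyway, so some device like your smoothing argument is needed to close that gap. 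Your version is therefore more explicit and self-contained on the crucial Step~1; the paper's version is shorter but leans on a relative-bound assertion that it does not fully justify.
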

\begin{proof}
Here we define $\mathcal{I}(j) = e^{\Delta t \mathcal{L}_{\gamma}^{j} (\bm x)}e^{\Delta t \mathcal{C}_{\gamma}^{j} (\bm x)} - e^{\Delta t \mathcal{L}_{\gamma} (\bm x)}e^{\Delta t \mathcal{C}_{\gamma} (\bm x)}$. Then, following \eqref{erreste3partdeltat} we can obtain 
\begin{equation}
    ||\mathcal{I}(j) q||_{L^2} \leq \big(\lambda \Delta t ||\bm v - \bm v_{j}||_{L^2} + O((\Delta t)^2) \big)||q||_{L^2} + (\Delta t + (\Delta t)^2) \cdot ||\bm v - \bm v_{j}||_{L^2} ||\nabla_{\bm x} q||_{L^2},
\end{equation}
here the term $O((\Delta t)^2)$ actually should be infinitely termed' sum which all consist of the coefficient $||\bm v - \bm v_{j}||_{L^2}$. Due to we do not care so much about the term with a high $\Delta t$ order, here we still write $O((\Delta t)^2)$, but we know this term converges to zero as $j$ goes to infinity. then $||\nabla_{\bm x}q ||_{L^2}$ is bounded by $||(\mathcal{A} - \gamma)q||_{L^2}$, then we can get it is also bounded by $C_J||e^{\Delta t \mathcal{L}_{\gamma} (\bm x)}e^{\Delta t \mathcal{C}_{\gamma} (\bm x)}q||_{L^2}$. Then we can obtain that $\mathcal{I}(j), j = 0,1,2,...$ is $e^{\Delta t \mathcal{L}_{\gamma} (\bm x)}e^{\Delta t \mathcal{C}_{\gamma} (\bm x)}$-bounded and both the coefficient of $||q||_{L^2}$ term and $||\nabla_{\bm x} q||_{L^2}$ term converge to zero as $j$ goes to infinity. Following \ref{SuffCondGenCon}, we can get that  $e^{\Delta t \mathcal{L}_{\gamma}^{j} (\bm x)}e^{\Delta t \mathcal{C}_{\gamma}^{j} (\bm x)} \to e^{\Delta t \mathcal{L}_{\gamma} (\bm x)}e^{\Delta t \mathcal{C}_{\gamma} (\bm x)}$ in the generalized sense.

Then following the spectral gap assumption, \ref{PerbCtsEigTx} and \ref{RelaPerOpandEig}, we can obtain that when adding a small perturbation $\mathcal{I}(j)$ on the original operator $e^{\Delta t \mathcal{L}_{\gamma} (\bm x)}e^{\Delta t \mathcal{C}_{\gamma} (\bm x)}$, the change of the principal eigenvalue is also small in the sense that the principal eigenvalue is continuous at the coefficient of the perturbation equal to zero. Then we finish the proof and can extend this conclusion to the one iteration from $0$ to $T$. 
\end{proof}
%consider this part desc
\begin{thm}
    For sufficiently small $\Delta t$ and large $j$, we denote the following error estimate of the random Fourier method to approximate the  operator obtained by the Lie-Trotter operator splitting method \eqref{LieTrotterAPPSoluOperator} as event $B_2$,
    \begin{equation}
        B_2 = \Big\{ \big|\big|\prod_{h=1}^H e^{\Delta t \mathcal{L}} e^{\Delta t \mathcal{C} } - \prod_{h=1}^H e^{\Delta t \mathcal{L}_{j} } e^{\Delta t \mathcal{C}_{j} } \big|\big|_{L^2} \leq C_3(\lambda, p) \sqrt{||\bm v - \bm v_{j}||^2_{L^2} + (\Delta t)^2} \Big\},
        \label{Theorem3.6e3Con}
    \end{equation}
    and $B_2$ holds with a probability greater than $p$, i.e. $prob(B_2) > p$. Here $p$ is a probability which approaches to 1 (see Lemma \ref{Lemmma:Defpandupperbound}) and to be simplify, we use $\mathcal{L},\mathcal{C},\mathcal{L}_{j}, \mathcal{C}_{j}$ to denote $\mathcal{L}(\bm x,\omega)$, $\mathcal{C}(\bm x,\omega)$,$\mathcal{L}_{j}(\bm x,\omega)$, $\mathcal{C}_{j}(\bm x,\omega)$.  
    \label{Thm::Errofe3ande}
\end{thm}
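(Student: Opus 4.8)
The plan is to derive Theorem~\ref{Thm::Errofe3ande} as a bookkeeping corollary of Theorem~\ref{Thm::e3_5}: the analytic content is already in the bound \eqref{Thm::e3_34}, so what remains is to specialize $\tilde{\bm v}$ to the sequence member $\bm v_j$, to bound the factor $\|\nabla_{\bm x}q\|_{L^2}$ appearing there uniformly in $\Delta t$ and $j$, and to repackage the two error contributions---one carrying $\|\bm v-\bm v_j\|_{L^2}$, one carrying $\Delta t$---into the single square-root factor. Recall from \eqref{Error3_Def} that, with $\tilde{\bm v}=\bm v_j$, $e_3=\big\|\big(\prod_{h=1}^H e^{\Delta t\mathcal{L}}e^{\Delta t\mathcal{C}}-\prod_{h=1}^H e^{\Delta t\mathcal{L}_j}e^{\Delta t\mathcal{C}_j}\big)q\big\|_{L^2}$, and the operator norm in $B_2$ is the supremum of $e_3/\|q\|_{L^2}$ over $q\neq 0$; so it suffices to prove, on a probability-$>p$ event, that $e_3\le C_3(\lambda,p)\sqrt{\|\bm v-\bm v_j\|_{L^2}^2+(\Delta t)^2}\,\|q\|_{L^2}$.

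First I would apply Theorem~\ref{Thm::e3_5} with $\tilde{\bm v}=\bm v_j$: on the event $B_1$, of probability $>p$,
\begin{equation*}
e_3\le Te^{\gamma T}\Big(\|\bm v-\bm v_j\|_{L^2}\big(\|\nabla_{\bm x}q\|_{L^2}+\lambda\|q\|_{L^2}\big)+(\Delta t)\big(\|\bm v-\bm v_j\|_{L^2}\|\nabla_{\bm x}q\|_{L^2}+\|q\|_{L^2}\big)\Big).
\end{equation*}
Next I would bound $\|\nabla_{\bm x}q\|_{L^2}$: combining the interpolation inequality $\|\nabla_{\bm x}q\|_{L^2}\le C\|\Delta_{\bm x}q\|_{L^2}^{1/2}\|q\|_{L^2}^{1/2}$ with the coercivity estimate of Proposition~\ref{Prop3_5} and the choice of $\gamma$ in Lemma~\ref{Lemmma:Defpandupperbound} (so that $\|\Delta_{\bm x}q\|_{L^2}+\|q\|_{L^2}$ is dominated by $\|(\mathcal{A}-\gamma)q\|_{L^2}$), together with the bound $\|(\mathcal{A}-\gamma)q\|_{L^2}\le C_J\|e^{\Delta t\mathcal{L}_\gamma}e^{\Delta t\mathcal{C}_\gamma}q\|_{L^2}\le C_J\|q\|_{L^2}$ used in the proof of Theorem~\ref{Thm::ConvOfPrinEigofRFM}, gives $\|\nabla_{\bm x}q\|_{L^2}\le\hat C\|q\|_{L^2}$ with $\hat C$ depending only on $\kappa$, $\lambda$ and $p$ (through $\gamma$) and not on $\Delta t$ or $j$. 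Substituting this, and using $\Delta t\le T$ for small $\Delta t$ to absorb the cross term $\Delta t\,\|\bm v-\bm v_j\|_{L^2}\|\nabla_{\bm x}q\|_{L^2}$ into the $\|\bm v-\bm v_j\|_{L^2}$-contribution, we get $e_3\le C(\lambda,p)\big(\|\bm v-\bm v_j\|_{L^2}+\Delta t\big)\|q\|_{L^2}$ on $B_1$.

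Finally I would apply $a+b\le\sqrt{2}\,\sqrt{a^2+b^2}$ with $a=\|\bm v-\bm v_j\|_{L^2}$ and $b=\Delta t$, obtaining the claimed bound with $C_3(\lambda,p)=\sqrt{2}\,C(\lambda,p)$; since every step is valid on $B_1$ (intersected, if needed, with the finitely many probability-$>p$ events of Lemma~\ref{Lemmma:Defpandupperbound} guaranteeing the upper bounds that build $\gamma$), we have $B_2\supseteq B_1$, hence $prob(B_2)>p$. The step I expect to be the main obstacle is the uniform control of $\|\nabla_{\bm x}q\|_{L^2}$: one must verify that the constant $C_J$ in $\|(\mathcal{A}-\gamma)q\|_{L^2}\le C_J\|e^{\Delta t\mathcal{L}_\gamma}e^{\Delta t\mathcal{C}_\gamma}q\|_{L^2}$ can be chosen independent of $\Delta t$ (this exploits the analyticity and near-contractivity of the split semigroups once $\gamma$ is large) and of the realization $\omega$ on the good event, and that $\gamma$, assembled from the random upper bounds of Lemma~\ref{Lemmma:Defpandupperbound}, ultimately depends only on $\lambda$ and $p$; granting this, the rest is the routine repackaging above.
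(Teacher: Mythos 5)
Your plan diverges from the paper's actual argument in Appendix~D, which does \emph{not} go through Theorem~\ref{Thm::e3_5} applied to a test function and then extract an operator norm. Instead, the paper uses the algebraic identity
\begin{align*}
e^{\Delta t \mathcal{L}_{\gamma}^{j}}e^{\Delta t \mathcal{C}_{\gamma}^{j}} - e^{\Delta t \mathcal{L}_{\gamma}}e^{\Delta t \mathcal{C}_{\gamma}}
= e^{\Delta t \mathcal{L}_{\gamma}}e^{\Delta t \mathcal{C}_{\gamma}}\bigl[e^{-\Delta t \mathcal{C}_{\gamma}}e^{-\Delta t \mathcal{L}_{\gamma}} - e^{-\Delta t \mathcal{C}_{\gamma}^{j}}e^{-\Delta t \mathcal{L}_{\gamma}^{j}}\bigr]e^{\Delta t \mathcal{L}_{\gamma}^{j}}e^{\Delta t \mathcal{C}_{\gamma}^{j}},
\end{align*}
re-expands this to produce a self-referential (bootstrap) inequality for $\|(e^{\Delta t\mathcal{L}_\gamma^j}e^{\Delta t\mathcal{C}_\gamma^j}-e^{\Delta t\mathcal{L}_\gamma}e^{\Delta t\mathcal{C}_\gamma})q\|_{L^2}$, shows the coefficient of the self-referential term is at most $\tfrac12$ for $j\ge N^*$, and then solves for the quantity before telescoping over the $H$ steps and applying Cauchy--Schwarz. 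That bootstrap is the mechanism that lets the paper produce a multiple of $\|q\|_{L^2}$ (hence an operator-norm bound) without ever needing a pointwise inequality of the type $\|\nabla_{\bm x}q\|_{L^2}\lesssim\|q\|_{L^2}$.

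The gap in your proposal is exactly the step you flag as the ``main obstacle,'' and it cannot be closed as stated. You want $\|\nabla_{\bm x}q\|_{L^2}\le\hat C\|q\|_{L^2}$ \emph{uniformly over all $q\in L^2(\mathcal D)$}, since the left side of \eqref{Theorem3.6e3Con} is an operator norm and the estimate from Theorem~\ref{Thm::e3_5} carries a $\|\nabla_{\bm x}q\|_{L^2}$ factor. But $\nabla_{\bm x}$ is an unbounded operator on $L^2$, so no such uniform bound exists. The chain you invoke to manufacture it is internally inconsistent: the middle step uses Proposition~\ref{Prop3_5} in the \emph{coercivity} direction $\|\Delta_{\bm x}q\|_{L^2}+\|q\|_{L^2}\lesssim\|(\mathcal A-\gamma)q\|_{L^2}$, but the final step $\|(\mathcal A-\gamma)q\|_{L^2}\le C_J\|e^{\Delta t\mathcal L_\gamma}e^{\Delta t\mathcal C_\gamma}q\|_{L^2}\le C_J\|q\|_{L^2}$ would make the second-order elliptic operator $\mathcal A-\gamma$ bounded on $L^2$, which is false. (In the proof of Theorem~\ref{Thm::ConvOfPrinEigofRFM} these inequalities are used to argue \emph{relative} boundedness of $\mathcal I(j)$ with respect to $e^{\Delta t\mathcal L_\gamma}e^{\Delta t\mathcal C_\gamma}$ in Kato's sense, i.e.\ to bound $a_j,b_j$ in $\|\mathcal I(j)v\|\le a_j\|v\|+b_j\|Tv\|$; they do not produce, and cannot produce, a uniform $\|\nabla_{\bm x}q\|_{L^2}\lesssim\|q\|_{L^2}$.) Once that bound fails, your $e_3\le C(\lambda,p)(\|\bm v-\bm v_j\|_{L^2}+\Delta t)\|q\|_{L^2}$ and the Cauchy--Schwarz repackaging have nothing to stand on. To repair the argument along the paper's lines you would need either the conjugation-and-bootstrap identity above, or some explicit smoothing of $e^{\Delta t\mathcal L_\gamma}$ to trade the lost derivative for a (controlled) inverse power of $\Delta t$.
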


Detailed proof of Theorem \ref{Thm::Errofe3ande} can be seen from Appendix \ref{sec::AppendixProofThm3_7}. Together with the lemma and theorem mentioned above, we can give an error estimate for the principal eigenvalue.

\begin{thm}
    Let $e^{\mu(\lambda) T}$ and $e^{\mu_{\Delta t}(\lambda) T}$ denote the principal eigenvalue of the solution operator $\mathcal{U}(T,0)$ and the approximated solution operator $\prod_{h=1}^H e^{\Delta t_h \tilde{\mathcal{L}} (\bm x,\omega)}$ $ e^{\Delta t_h \tilde{\mathcal{C}} (\bm x,\omega)}$ respectively. For given $E(k)$ satisfies P1 to P4, denoting the following error estimate as event $B_3$,
    \begin{equation}
        B_3 = \Big \{|e^{\mu(\lambda) T} - e^{\mu_{\Delta t}(\lambda) T}| \leq C_4(p)\cdot (\Delta t)^{\frac{1}{2}} + C_5(\lambda,p)\cdot ||\bm v - \tilde{\bm v}||_{L^2} \Big \},
        \label{ErrEstforemu}
    \end{equation}
 then $B_3$ holds with a probability greater than $p$ for sufficiently small $\Delta t$, and large $N_F$. Here $p$ is a probability which approaches to 1 (see Lemma \ref{Lemmma:Defpandupperbound}). $\mathbb{E}_{\omega}[||\bm v - \tilde{\bm v}||_{L^2}]$ has a convergence rate $O((N_F)^{-1})$ and $Var(||\bm v- \tilde{\bm v}||_{L^2})$ has a convergence rate $O((N_F)^{-2})$. Moreover, we can obtain that in this case, $|\mu(\lambda) - \mu_{\Delta t}(\lambda)| = O(C_4(p)(\Delta t)^{\frac{1}{2}} + C_5(\lambda, p)$ $||\bm v - \tilde{\bm v}||_{L^2})$. To simplify, we denote $C_4 = C_4(p),\ C_5 = C_5(\lambda,p)$ and
 $$ \mu_{IPM}(\lambda) = \lim_{N \to \infty} \frac{\sum_{h=0}^{H-1} \log\Big( N^{-1}\sum_{l=1}^{N} 
\exp(\tilde{c}(\widetilde\xi_{h}^{l,M-1})\Delta t)\Big) }{H\Delta t}.$$
 
 Combining with the error estimate of the Lagrangian (particle) method in Section \ref{sec:AnalysisLagrangianMethod}, we have the following convergence result: 
 Define event $B_4 = $
\begin{equation}
\Big \{\mu_{IPM}(\lambda) = \mu(\lambda) + O\big((1-\frac{\theta}{\vartheta})^M\big) + O\big(C_4(\Delta t)^{\frac{1}{2}}\big) + O(C_5 ||\bm v - \tilde{\bm v}||_{L^2}) \Big \},
   \label{ErrEstformu}
\end{equation}
\label{Thm::TotalErrEstforprinEig38}
then $B_4$ holds with probability greater than $p$. 
\end{thm}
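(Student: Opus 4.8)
The plan is to obtain $B_4$ (and along the way $B_3$) by assembling three ingredients that are already available: the operator-splitting errors $e_1,e_2$ recorded in Proposition~\ref{Prop3.4}, the random-Fourier error $e_3$ bounded in Theorem~\ref{Thm::e3_5} and Theorem~\ref{Thm::Errofe3ande}, and the eigenvalue stability under a small perturbation supplied by Proposition~\ref{EigExistforrandomperturbation1} and Theorem~\ref{Thm::ConvOfPrinEigofRFM}. Throughout I would work on a single event $\Omega_p$ on which all the random upper bounds of Lemma~\ref{Lemmma:Defpandupperbound} (hence those entering Theorem~\ref{Thm::e3_5} and Theorem~\ref{Thm::Errofe3ande}) hold simultaneously; by the construction in Lemma~\ref{Lemmma:Defpandupperbound} we have $\mathbb{P}(\Omega_p)>p$, and on $\Omega_p$ every constant below is deterministic, depending only on the realized bounds $M_1^*,\dots,M_4^*$, on $\gamma$, and on $\lambda$. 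Identifying $\tilde{\bm v}$ with the truncation $\bm v_j$ of Section~\ref{sec:ErrorEstRanFourMethod}, the quantitative rates for $\|\bm v-\tilde{\bm v}\|_{L^2}$ come from \eqref{ErrestforEwvr_vjL2} together with the correlation-function truncation estimate \eqref{ErrEstRx}: doubling the number of Fourier modes halves the residual, giving $\mathbb{E}_\omega[\|\bm v-\tilde{\bm v}\|_{L^2}]=O((N_F)^{-1})$ and $\mathrm{Var}(\|\bm v-\tilde{\bm v}\|_{L^2})=O((N_F)^{-2})$.

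\textbf{Step 1: establish $B_3$.} I start from $e_{op}\le e_1+e_2+e_3$ between $\mathcal{U}(T,0)$ and the Lie--Trotter/random-Fourier operator $\prod_{h=1}^H e^{\Delta t_h\tilde{\mathcal{L}}}e^{\Delta t_h\tilde{\mathcal{C}}}$. By Proposition~\ref{bam} and Proposition~\ref{thm:existenceinvariantmeasure} both operators satisfy the uniform minorization and boundedness condition \eqref{MinorizationBoundedCondition}, so each has a simple, isolated, real principal eigenvalue with a spectral gap bounded below uniformly; the perturbation argument behind Proposition~\ref{EigExistforrandomperturbation1}/Theorem~\ref{Thm::ConvOfPrinEigofRFM} then converts the $L^2$-operator-norm bound $e_{op}$ into a bound on the gap of the principal eigenvalues, $|e^{\mu(\lambda)T}-e^{\mu_{\Delta t}(\lambda)T}|\le C\,e_{op}$ on $\Omega_p$. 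Inserting $e_1=O((\Delta t)^{1/2})$, $e_2=O((\Delta t)^{1/2})$ from Proposition~\ref{Prop3.4} and $e_3\le C_3(\lambda,p)\sqrt{\|\bm v-\tilde{\bm v}\|_{L^2}^2+(\Delta t)^2}\le C_3(\lambda,p)\big(\|\bm v-\tilde{\bm v}\|_{L^2}+\Delta t\big)$ from Theorem~\ref{Thm::Errofe3ande} yields $B_3$. Finally, by \eqref{MinorizationBoundedCondition} and \eqref{PrinEigRel} one has $e^{\mu_{\Delta t}(\lambda)T}\in[\theta,\vartheta]$ bounded away from $0$, so $\log$ is Lipschitz there and dividing the $B_3$ estimate by $T$ gives $|\mu(\lambda)-\mu_{\Delta t}(\lambda)|=O\big(C_4(\Delta t)^{1/2}+C_5\|\bm v-\tilde{\bm v}\|_{L^2}\big)$.

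\textbf{Step 2: insert the particle-method error and conclude $B_4$.} I apply Proposition~\ref{Prop3.4} with the perturbed coefficients $\tilde{\bm b},\tilde{c}$ in place of $\bm b,c$; this is legitimate on $\Omega_p$, where $\tilde{\bm b},\tilde{c}$ are bounded, smooth and periodic with deterministic constants, so the $N$-IPS limit of the empirical PFGR sum, namely $\mu_{IPM}(\lambda)$, equals the principal eigenvalue $\mu_{\Delta t}(\lambda)$ of $\prod_{h=1}^H e^{\Delta t_h\tilde{\mathcal{L}}}e^{\Delta t_h\tilde{\mathcal{C}}}$ up to $O((1-\theta/\vartheta)^M)+O((\Delta t)^{1/2})$. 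Combining this with Step~1, $\mu_{IPM}(\lambda)=\mu(\lambda)+O((1-\theta/\vartheta)^M)+O(C_4(\Delta t)^{1/2})+O(C_5\|\bm v-\tilde{\bm v}\|_{L^2})$, which is exactly the content of event $B_4$; since all inequalities used hold on $\Omega_p$, we get $\mathbb{P}(B_4)\ge\mathbb{P}(\Omega_p)>p$.

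\textbf{Main obstacle.} The delicate step is the eigenvalue-stability estimate in Step~1: turning an $L^2$-operator-norm (equivalently, relative-boundedness) bound on the difference of two solution operators into a bound on the difference of their \emph{principal} eigenvalues. This requires the spectral gap to persist along the whole family so that the principal eigenvalue stays isolated, which is precisely what Proposition~\ref{EigExistforrandomperturbation1} and Theorem~\ref{Thm::ConvOfPrinEigofRFM} deliver — but only when the random remainder $\bm v-\tilde{\bm v}$ is small, which is why the statement is asserted for large $N_F$ and on the high-probability event $\Omega_p$, and why the constant $C$ above must be tracked as depending on the realized gap and on $\lambda$ (this dependence is what is recorded in $C_4(p)$ and $C_5(\lambda,p)$). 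A secondary but necessary bit of bookkeeping is that the events $B_1$ (Theorem~\ref{Thm::e3_5}), $B_2$ (Theorem~\ref{Thm::Errofe3ande}) and the bounds of Lemma~\ref{Lemmma:Defpandupperbound} must be read off the \emph{same} underlying event $\Omega_p$ rather than intersected by a naive union bound (which would only give probability $>2p-1$); this is consistent with the way the single probability $p$ close to $1$ is introduced in Lemma~\ref{Lemmma:Defpandupperbound}.
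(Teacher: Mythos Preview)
Your proposal is correct and follows essentially the same route as the paper: decompose $e_{op}\le e_1+e_2+e_3$, convert the operator-norm bound into a principal-eigenvalue bound via Kato-type perturbation theory (the paper simply cites the ``standard spectral theorem in \cite{kato2013perturbation}'' and writes $|\mu(\lambda)-\mu_{\Delta t}(\lambda)|\le C_{sp}\,e_{op}$, whereas you route this through Proposition~\ref{EigExistforrandomperturbation1}/Theorem~\ref{Thm::ConvOfPrinEigofRFM} and the minorization bounds, which amounts to the same thing), and then add the particle-method error from Proposition~\ref{Prop3.4} applied to $\tilde{\bm b},\tilde c$. Your explicit justification that $e^{\mu_{\Delta t}(\lambda)T}\in[\theta,\vartheta]$ so that $\log$ is Lipschitz, and your remark that all probabilistic bounds are read off a single event $\Omega_p$ from Lemma~\ref{Lemmma:Defpandupperbound}, are refinements the paper leaves implicit; the paper also records the explicit constants $C_4(p)=\tfrac{8}{3}C_sTe^{\gamma T}$ and $C_5(\lambda,p)=4(C_D(p)+\lambda)Te^{\gamma(T+1)}$, which you do not need to rederive.
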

\begin{proof}
    According to the standard spectral theorem in \cite{kato2013perturbation}, the principal eigenvalue of these two operators $\mathcal{U}(T,0)$ and $\prod_{h=1}^H e^{\Delta t_h \tilde{\mathcal{L}} (\bm x,\omega)} e^{\Delta t_h \tilde{\mathcal{C}} (\bm x,\omega)}$ will satisfy
    \begin{equation}
       |\mu(\lambda) - \mu_{\Delta t}(\lambda)| \leq C_{sp} \big|\big|\mathcal{U}(T,0) - \prod_{h=1}^H e^{\Delta t_h \tilde{\mathcal{L}} (\bm x,\omega)} e^{\Delta t_h \tilde{\mathcal{C}} (\bm x,\omega)} \big|\big|_{L^2} .
       \label{StandardSpectralThm}
    \end{equation}
    Here we first define $C_4(p) = \frac{8}{3} C_s T e^{\gamma T}$ and $C_5(\lambda, p) = 4 (C_D(p) + \lambda) T e^{\gamma(T+1)}$. Follow the error estimate we analysed for $e_1,e_2,e_3, ||\bm v - \tilde{\bm v}||_{L^2}$ and Theorem \ref{Thm::Errofe3ande}. By the triangle inequality, we can get \eqref{ErrEstforemu}. And then we can accordingly obtain the error estimate for $|\mu(\lambda) - \mu_{\Delta t}(\lambda)|$. Then due to the analysis in Section \ref{sec:AnalysisLagrangianMethod} and by triangle inequality, we get the convergence result \eqref{ErrEstformu}. %For other types of energy spectral density $E(k)$ and 3D cases, we apply a similar idea to analyze them and give corresponding results.
    \end{proof}

\section{Numerical experiments}\label{sec:NumericalResults}
\noindent
We present several numerical examples computed by the IPM. We first show examples to verify the convergence and accuracy of the IPM method in computing $\mu$. Then we consider 2D and 3D random fields with the equispaced discrete spectrum where $\Delta k = \frac{1}{20\pi}$ and compute front speeds $c^*$ of these cases. Meanwhile, we investigate and analyze the relationship between  $c^*$ and the magnitude of the velocity fields on torus $\mathcal{D}$. We take domain size to infinity and study the finite domain effect to $c^*$ on $\mathbb{R}^d$ by a normalized/centered IPM. Such an unbounded domain problem is impossible to compute by mesh-based methods without a proper domain truncation. 

\subsection{Convergence test in 
computing $\mu$}\label{sec:ConAnainCompPrinEignVal}
\noindent
The convergence test of the operator splitting method and using the IPM method in computing principal eigenvalues of determined flows can be seen in \cite{Junlong2022}.
Different from the test in the deterministic velocity field, here we also need to verify the convergence between generated random fields (using the random Fourier method \cite{Majda:99}) and real random potential fields. Here we choose a specific $E(k)$ which satisfies the properties $P1$ to $P4$ we proposed in Section \ref{sec:AppofRanFieldusingRFM}   
\begin{equation}
    E(k) = |k|^{\frac{1}{2}} e^{-|k|}.
    \label{SpecEneDen05_1}
\end{equation}

We plot the performance of the coefficient of the generated random velocity field. From Figure \ref{CoeffPerGenRanFie} we see that as frequency $k$ goes to infinity, the coefficient of the corresponding Fourier mode term in the generated random velocity field goes to $0$. We also test convergence using the random Fourier method generated velocity in computing the principal eigenvalues for a 2D random shear flow $v(x, y) = \delta \cdot (0, \xi(x))$. Here $\delta = \frac{1}{40}$ and $\xi(x)$ is set as follows:
\begin{equation}
    \xi(x) = \sum_{j = 0}^{N_F} \sqrt{2 E(k_j) \Delta k_j}\cdot [\zeta_{j} \cos(2\pi k_j x) + \eta_{j} \sin (2\pi k_j x)].
    \label{xi(x)def}
\end{equation}

\begin{figure}[tbph]
\centering
     \begin{subfigure}{0.45\textwidth}
         \includegraphics[width = \linewidth, height = 4cm]{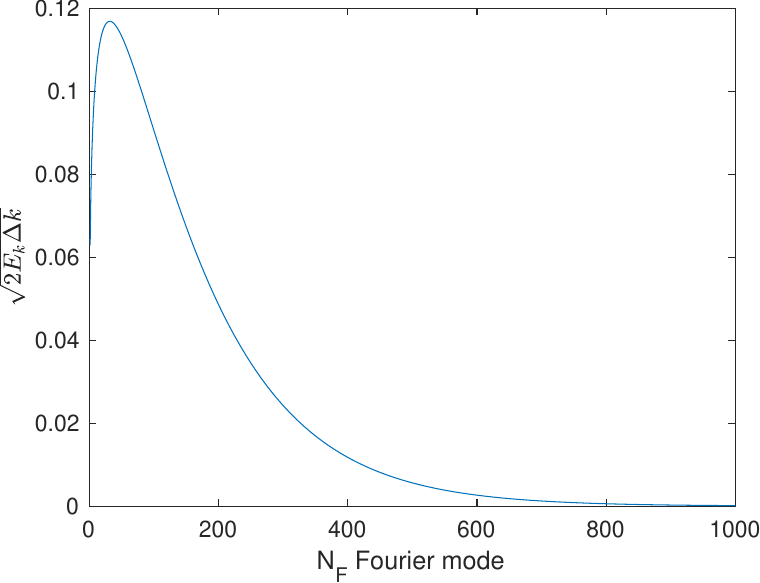}
         \caption{Coefficient value of Fourier mode}
         \label{CofValofFourMode}
     
     \end{subfigure}
     \begin{subfigure}{0.45\textwidth}
         \includegraphics[width = \linewidth, height = 4cm]{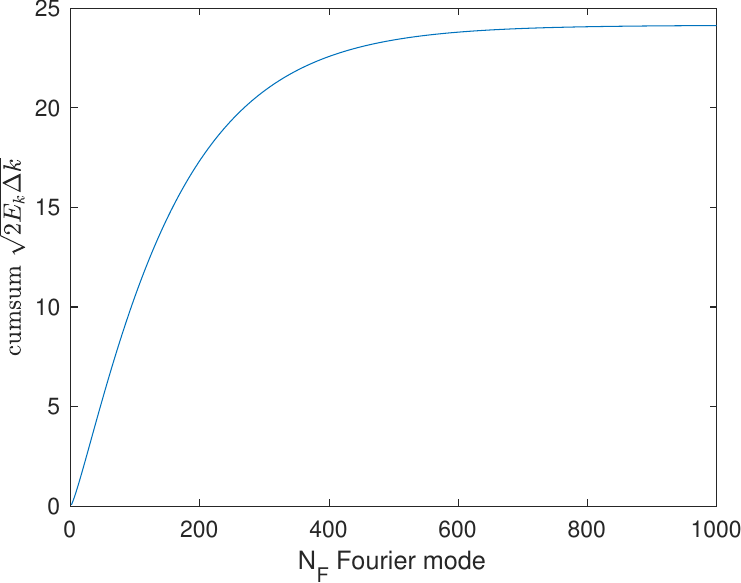}
         \caption{Cumsum of coefficient value}
         \label{CumSumofFourMode}    
     \end{subfigure}
     \caption{Performance of the coefficient of the generated random velocity field.}
     \label{CoeffPerGenRanFie}
\end{figure} 
\begin{figure}[h]
    \centering
    \includegraphics[width = 0.75\textwidth,height=5cm]
    {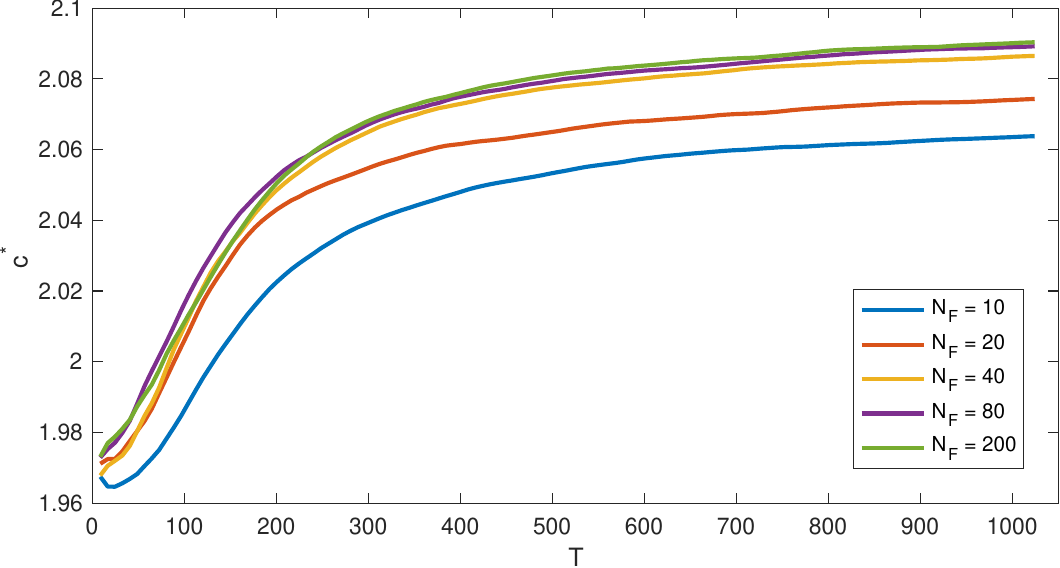}
    \caption{Front speed $c^*$ vs. $t$ of random shear flow with different Fourier modes $N_F$.}
    \label{NmodePrinEigCon}
\end{figure}

We can see from Figure \ref{NmodePrinEigCon} that in one random realization, as we increase the Fourier modes number, the principal eigenvalues we get are very close when the Fourier modes number $N_F$ is near or larger than 200 in this case.

\subsection{IPM vs. Semi-Lagrangian computation of KPP front speed}\label{sec:CompKPPbetIPMandSemiLagr}
\subsubsection{2D cellular flow}\label{sec:CompIPMandSemi_2DCell}
\noindent
%In this part, we would like to study some numerical cases to 
We compare interacting particle method, spectral and semi-Lagrangian methods in computing KPP front speeds
%. We test the consistency of the IPM method in computing KPP front speeds of 
in 2D cellular flow:
\begin{equation}
    v(x, y) = \delta\cdot\big(-\sin(x)\cos(y),\ \cos(x) \sin(y)\big) .
\end{equation}

We take the spectral method as a benchmark and select $\delta$ from $2^0$ to $2^4$. Figure \ref{2DCellComp} shows that these three methods give similar results. The log-log fitted curve has a slope of 0.257 so that $c^* = O(\delta^{0.257}) \approx O(\delta^{0.25})$, consistent with theory   \cite{Novikov2007}. In Figure \ref{2DCellComp} 
%shows that the results are consistent with those by 
semi-Lagrangian and spectral methods are in agreement. Figure \ref{Searche} is an example for searching $\lambda \bm e$ to reach minimal front speed. Here $\bm z = (1,0)$. We do a grid search for the optimal vector $\lambda \bm e$ and determine $c^*(\bm z)$ by its variational formula.
%obtained minimum value of $c$ and its corresponding $\lambda \bm e$. 
We found that the optimal dual direction vector $\bm e$ is either equal to $\bm z$ or near $\bm z$. The former occurs for random flows and reduces the search to only the scalar variable $\lambda$ thereby saving computation time. The latter occurs for flows with ordered streamlines (e.g. cellular flows) for which a local search around $\bm z$ suffices instead of a time-consuming global research.  
 % (JX: \textcolor{red}{what is $z$ in this search? How's search done specifically ? Grid search?} done)
\begin{figure}[h]
    \centering
    \begin{subfigure}{0.5\textwidth}
    \includegraphics[width = \linewidth,height=4cm]{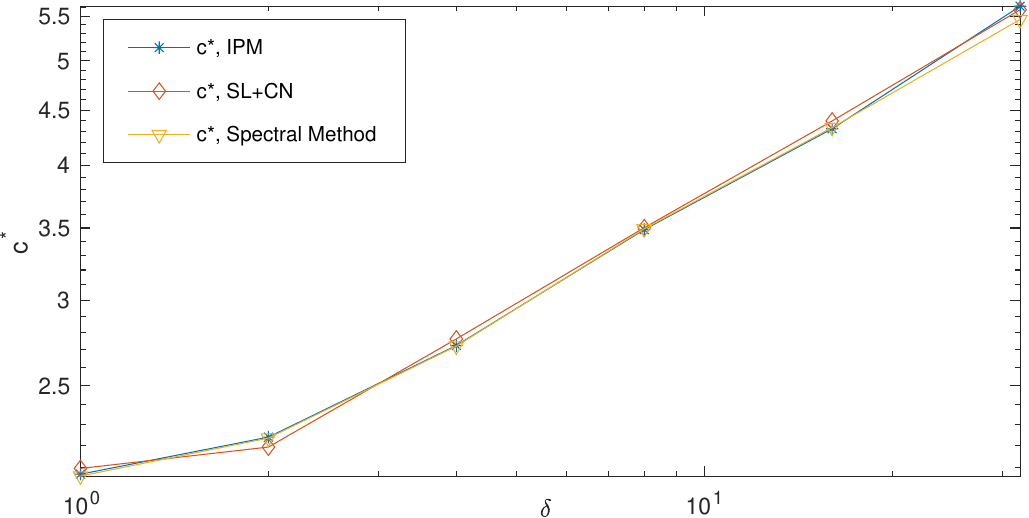}
    \caption{Comparison of $c^*$ in 2D cellular flow}
    \label{2DCellComp}
\end{subfigure}    
\begin{subfigure}{0.45\textwidth}
    \includegraphics[width = \linewidth,height=4cm]{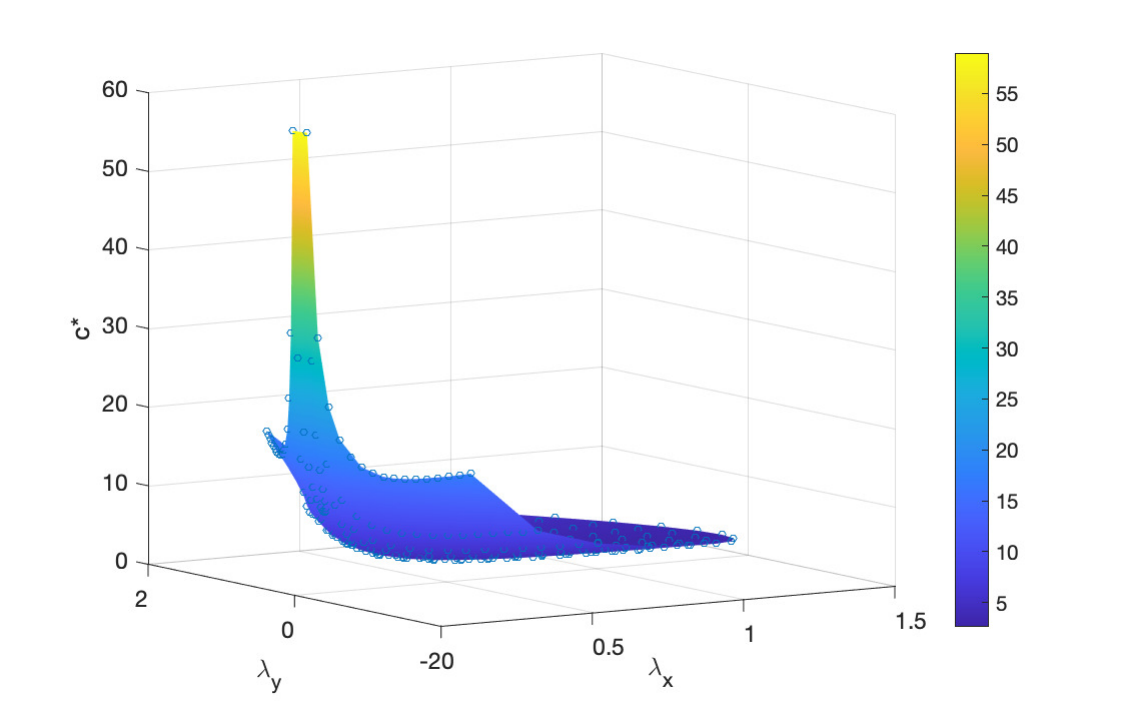}
    \caption{Search for $\lambda \bm e$, $\delta = 2^2$}
    \label{Searche}
    
\end{subfigure}    
\caption{Computation of $c^*$ in 2D cellular flow.}
\end{figure}

\subsubsection{2D random shear flow}\label{sec:CompIPMandSemi_2Dransh}
\noindent Now we adjust the velocity field with only the generated random shear field, i.e.
$v(x,y) = \delta \cdot (0, \xi(x))$. Here we would like to simulate the case in $\mathbb{R}^2$.  We can see that given sufficient Monte Carlo number $N$ and evolution time $n$ in the IPM, the Semi-Lagrangian method needs increasingly fine grids to get similar results. And during the process that we enlarge the domain to 'infinity', for the IPM, we just need to change the initial distribution and similarly compute the particle traces. This does not have any effect on the computation time. For the Semi-Lagrangian method, as we enlarge the domain, we will need to increase the number of times of the grids if we want to maintain the same partition accuracy. 
We compute the KPP front speed $c^*$ in different domain settings by these two methods with the same random realization. As in \cite{Nolen2008}, for the advection-reaction step, we use a semi-Lagrangian scheme; and for the diffusion step, we use an implicit Crank-Nicholson scheme in time and spectral method in space. We denote this method as SL+CN. In the SL+CN method, if we would like to simulate the situation with an unbounded domain, we can only enlarge the domain step by step. For example, we start with the domain $[0,2\pi]^2$ and enlarge it to $[0, 4\pi]^2,\ [0,8\pi]^2,\ ...\ $. In the Eulerian framework of the SL+CN method, we need to divide the whole domain into $N_x \times N_y$ grids. Here for simplicity, we just assume $N_x = N_y$ and if we want to maintain the mesh size during the process of enlarging the domain, the time cost will get more and more expensive. If we would like to maintain the time cost, then the accuracy of the SL+CN method will get worse and worse. The time complexity of the Semi-Lagrangian method is proportional to the number of grid points, i.e. if we set domain length as $L$ and maintain the mesh size, then the advection-reaction step's time complexity is $O(N_x^2) = O(L^2)$ in 2D cases. Benefiting from the FFT algorithm, the time complexity of the diffusion step is $O(L\log L)$. Time cost will become more expensive when we consider 3D cases by SL+CN. 
\begin{figure}[h]
\centering
\begin{subfigure}{0.48\textwidth}
    \includegraphics[width = \linewidth]{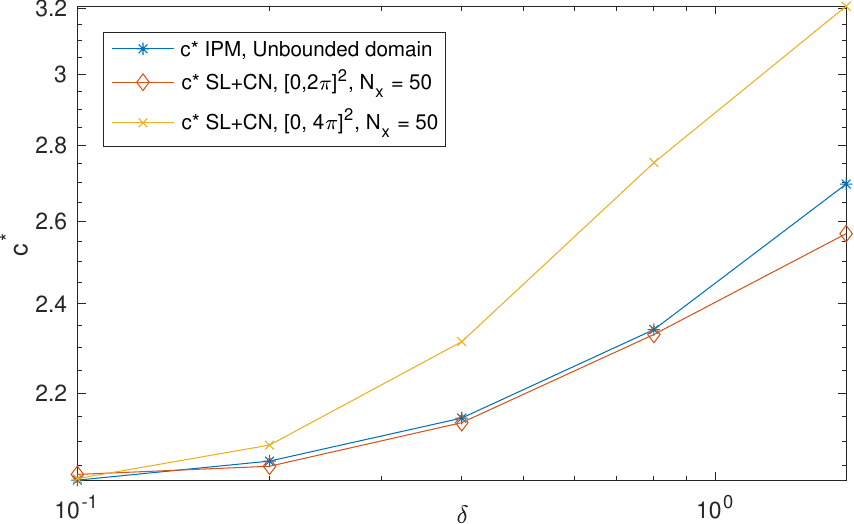}
    \caption{Same $N_x$, different domain lengths}
    \label{2DshycDLComplogc}
\end{subfigure}    
\begin{subfigure}{0.48\textwidth}
    \includegraphics[width = \linewidth]{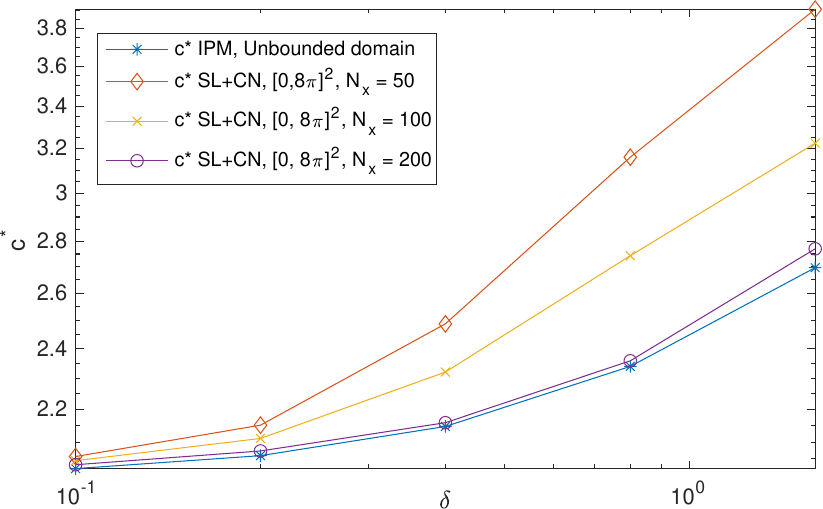}
    \caption{Same domain length, different $N_x$}
    \label{2DshycNxComplogc}
    
\end{subfigure}    
	\caption{Comparison of $c^*$ by IPM and SL+CN method.}
	\label{2DshycSLCNComp}
\end{figure} 

It is seen from Figure \ref{2DshycDLComplogc} and \ref{2DshycNxComplogc} that as the domain size $L$ gets larger, more grid points are needed to maintain the accuracy of SL+CN method. 
\subsection{3D cases' time complexity}Meanwhile,  we also test the time cost of SL+CN method in 3D cellular flow with evolution time $T = 100$ and varying meshgrid sizes. Where for the meshgrids $30\times 30 \times 30$, $35\times 35 \times 35$ and $40 \times 40 \times 40$, since it takes too long to evolve and the computational cost of each step of the evolution is similar, we only evolve one-tenth of the whole process, i.e. $T=10$ and estimate the computation time of the whole evolution process, i.e. $T = 100$. As illustrated in Figure \ref{timefor3DSLCN}, the computational time complexity is $O((N_x)^{7.90}) \approx O((N_x)^{8})$. This is due to the fact that the time complexity of an implicit format solver implemented using sparse matrices will be between $O((N_x)^6)$ and $O((N_x)^9)$ depending on the characteristics and structure of the sparse matrices. Such a time complexity is undoubtedly hard to accept when the SL+CN method would like to reach high resolution and the memory cost is also substantial. If we use the explicit scheme such as the Runge-Kutta 2 (RK2) scheme, since the entire format is implemented using sparse matrices, the time complexity illustrated in Figure \ref{3DSLEultime} is $O((N_x)^{2.90}) \approx O((N_x)^{3})$. Due to the CFL condition and other constraints, we have to shrink the time step $\Delta t$ when we increase the number of mesh grid points $N_x$. With adaptive time step $\Delta t$, the SL+RK2 method's time complexity shown in Figure \ref{3DSLEultime} is $O((N_x)^{4.90}) \approx O((N_x)^{5})$. We take the calculation with the mesh size of $80\times 80\times 80$ as a reference. To achieve the reference result of 0.01 tolerance, the SL+RK2 method requires $1.26 \times 10^4$ seconds, while our IPM takes only $7.62 \times 10^2$ seconds.  

\begin{figure}[ht]
    \centering
    \begin{minipage}{0.5\textwidth}
        \centering
        \includegraphics[width=\linewidth]{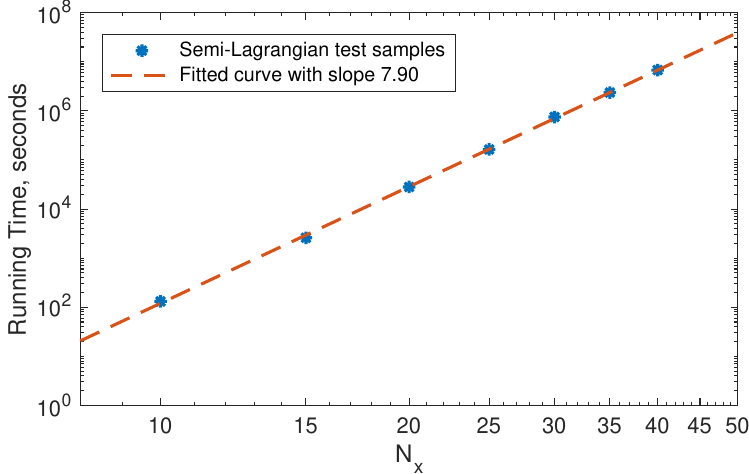}
        
    \end{minipage}%
    \begin{minipage}{0.45\textwidth}
        \centering
        \renewcommand{\arraystretch}{1.2} % 行高
        \begin{tabular}{|c|c|}
            \hline
            Mesh size & Running time \\ \hline
    $10\times 10 \times 10$ &  $1.31\times 10^2$s \\ 
    $15\times 15 \times 15$ &  $2.58\times 10^3$s \\ 
    $20\times 20 \times 20$ &  $2.81\times 10^4$s \\ 
    $25\times 25 \times 25$ &  $1.63\times 10^5$s \\ 
    $30\times 30 \times 30$ &  $7.49\times 10^5$s * \\ 
    $35\times 35 \times 35$ &  $2.36\times 10^6$s *\\ 
    $40\times 40 \times 40$ &  $6.77\times 10^6$s *\\ \hline
        \end{tabular}
        
    \end{minipage}
    \caption{Loglog plot and table of mesh size $N_x$ vs. Running time of SL+CN method in 3D Cellular flow (* in the table means the time is estimated).}
    \label{timefor3DSLCN}
\end{figure}

\begin{figure}[h]
\centering
% \begin{minipage}{0.47\textwidth}
%         \centering
    \includegraphics[width=0.7\linewidth]{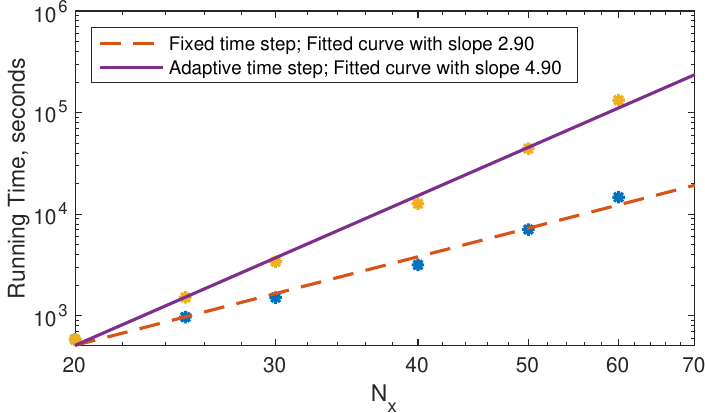}
    \caption{Loglog plot of Mesh size $N_x$ vs. Running time}
    \label{3DSLEultime}
    % \end{minipage}
    % \begin{minipage}{0.47\textwidth}
    % \centering
    %     \renewcommand{\arraystretch}{2} % 行高
    %     \begin{tabular}{|c|c|}
    %         \hline
    %         Method & Running time \\ \hline
    % IPM &  $7.62\times 10^2$s \\ \hline
    % SL+RK2 &  $1.26\times 10^4$s \\ 
    
    %  \hline
    %     \end{tabular}
    % \caption{Timetable for achieving the reference result of 0.01 tolerance for different methods}
    % \label{tablefor3DSLRK}
    % \end{minipage}
\end{figure}

\subsection{IPM computation of KPP front speeds in random flows}\label{sec:CompKPPc*indiffRanFlows}
 
\subsubsection{Random 2D cellular flow}\label{sec:KPPc*in2DSpOUflow}
\noindent
Here we change the velocity field into 2D cellular flow with random perturbation. The approximation to the random perturbation is generated by the random Fourier method, i.e. $v(x, y)$ is now changed into:
%Ornstein-Uhlenbeck process in space.
\begin{equation}
    v(x, y) = \delta \cdot(-\sin(x)\cos(y),\ \cos(x) \sin(y) + \epsilon\cdot \xi(x)).
    \label{2DRanCellFlow}
\end{equation}
Here $\xi(x)$ is defined the same as \eqref{xi(x)def} and follows the same energy spectral density $E(k)$ defined in \eqref{SpecEneDen05_1}. This is a 2D cellular flow with a random perturbation. In this numerical experiment, we set %the domain  $\mathcal{D} = \mathbb{T}^2$ and 
the iteration number $n = 256,\ \Delta t = 2^{-9}$, monte carlo number $N = 100,000$ (sufficient large). For the random velocity field with $\Delta k = \frac{1}{20 \pi}$, we set $N_F = 400$, $k_j = j\Delta k$. Then we compute the KPP front speeds with different coefficients of the random perturbation. 

\begin{figure}[h]
    \centering
    \includegraphics[width = 0.7\textwidth]{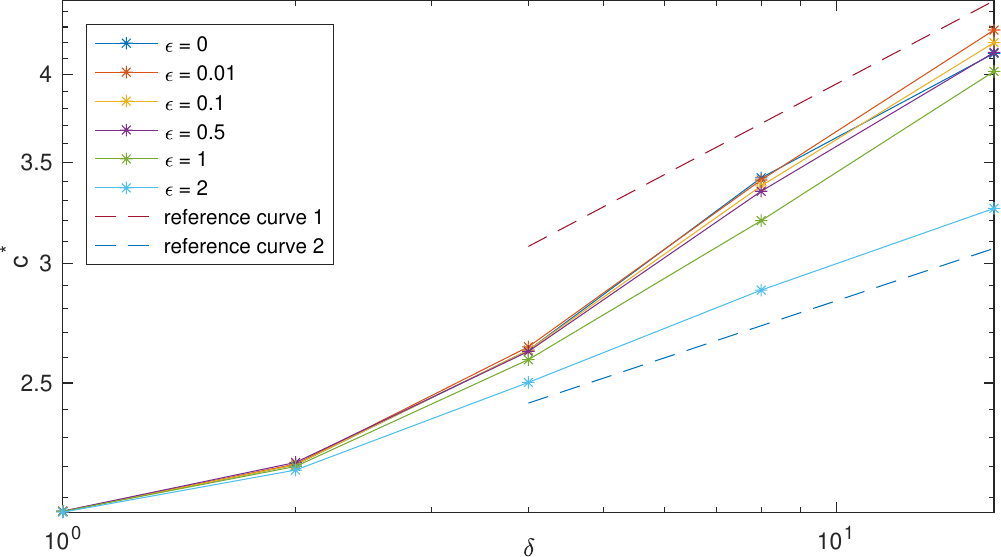}
    \caption{Log-log plot of $c^*$ vs. $\delta$ in 2D random cellular flow at different $\epsilon$; $z = (1,0)$.}
    \label{2DCellSpaceOUcxeps}
\end{figure} 
\begin{figure}[h]
    \centering
    \includegraphics[width = 0.7\textwidth]{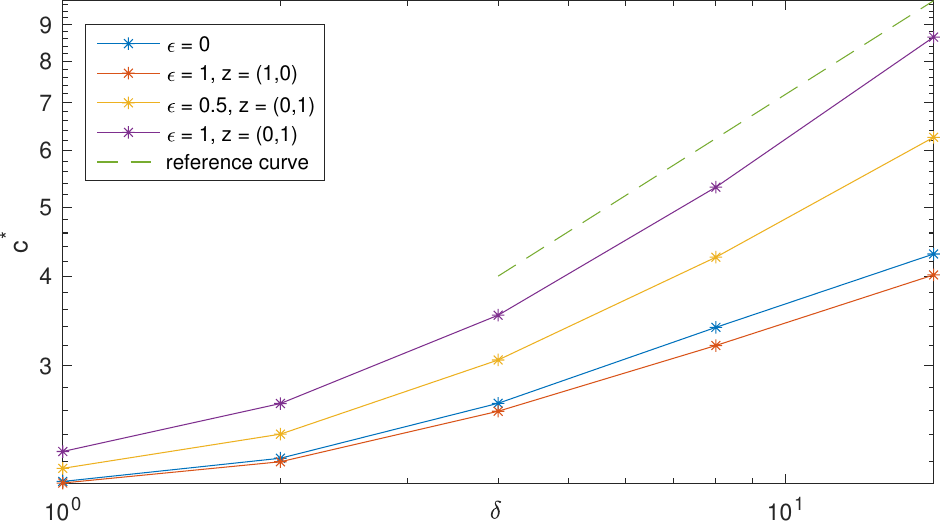}
    \caption{Log-log plot of $c^*$ vs. $\delta$ in 2D random cellular flow with different $\epsilon$ and $z$.}
    \label{2DCellSpaceOUcyeps}
\end{figure}

In Figure \ref{2DCellSpaceOUcxeps}, the two reference curves are with slopes of 0.26 and 0.17. Here we compute 30 realizations and take their averages. During the computation process, we found that the variance of these different realization samples is very small. Then from Figure \ref{2DCellSpaceOUcxeps}, we can see that: (1) As the coefficient of the random term $\epsilon$ is not equal to zero and increases, the front speed with direction $\bm z = (1,0)$, This term demonstrate the power-law scaling of $c^*$ concerning $\delta$, the fitted curves' slope is getting smaller and smaller. (2) As the coefficient of the random term $\epsilon$ decreasing and converging to zero, the front speed $c^*$ converges to the case with no random term (i.e. $\epsilon = 0$).

Then we changed the direction $\bm z = (0,1)$ and kept other parameters the same and got Figure \ref{2DCellSpaceOUcyeps}. In Figure \ref{2DCellSpaceOUcyeps}, the reference curve's slope is 0.64. We can see that if we change the direction $\bm z$ into $(0,1)$, then as the random coefficient $\epsilon$ increases, the front speed $c^*$ will also increase.

Then we compare the invariant measure of 2D Cellular flow with and without the addition of random perturbation term on 2-dimensional torus $\mathcal{D}$. From Figure \ref{2DCellEICompx}, by comparing with the sub-figures without random term (Left: \ref{0x2piEI} and \ref{0y2piEI}) and the invariant measure of the sub-figures with the random term (Right: \ref{1x2piEI} and \ref{05y2piEI}), we can see that the addition of the random term creates a degree of disruption to original invariant measure.

\begin{figure}[h]
\centering
         \begin{subfigure}{0.4\textwidth}
         \includegraphics[width = \linewidth,height = 3.5cm]{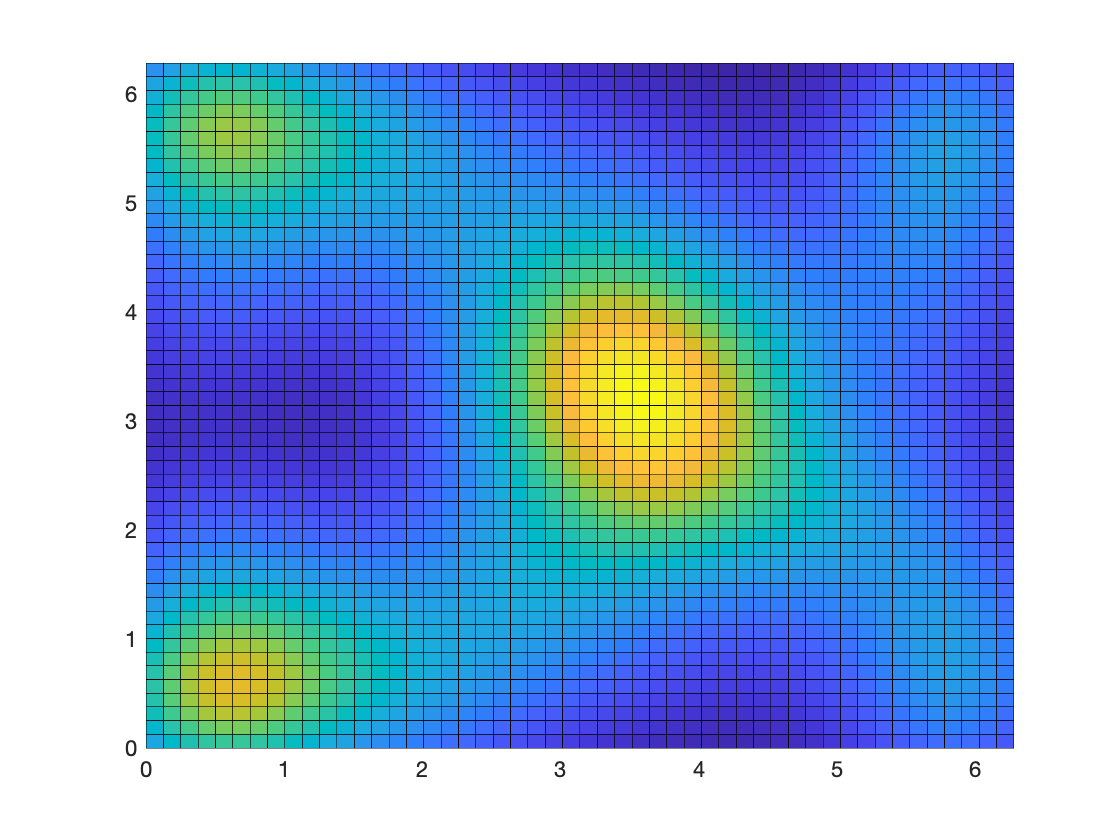}
         \caption{$\epsilon = 0,\ z = (1, 0)$}
         \label{0x2piEI}
         \end{subfigure}
         \begin{subfigure}{0.4\textwidth}
         \includegraphics[width = \linewidth,height = 3.5cm]{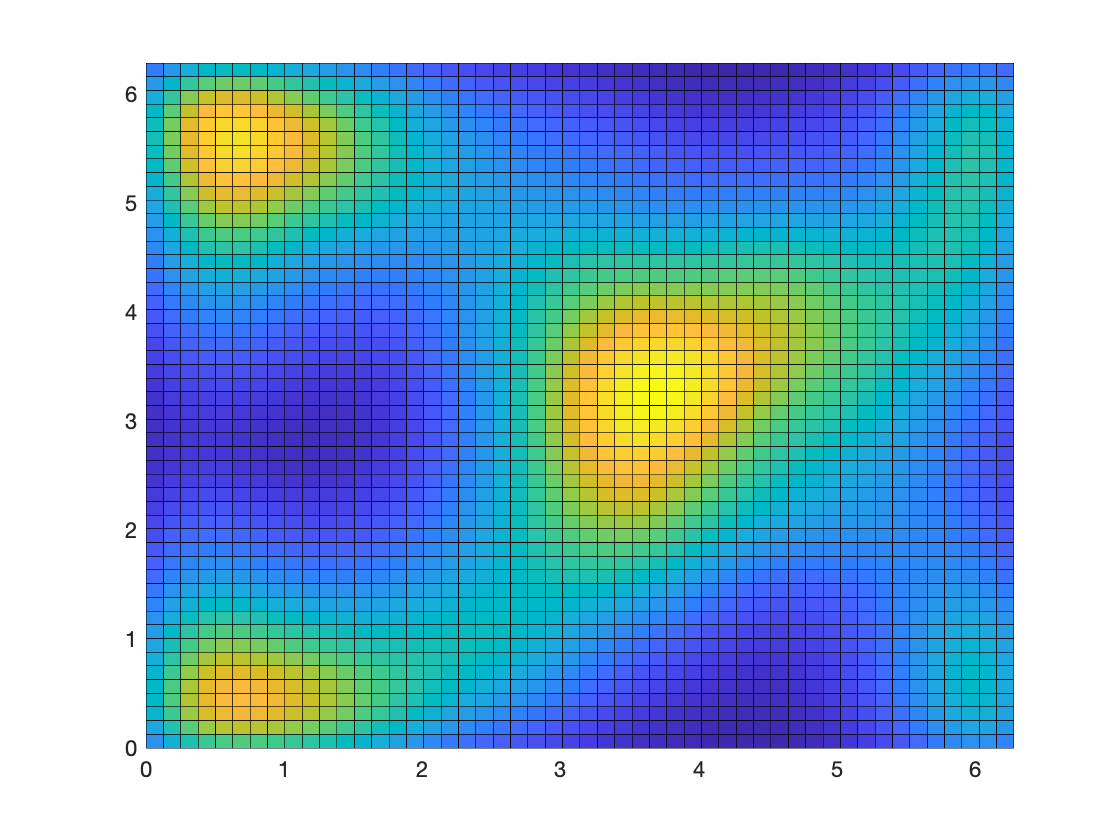}
         \caption{$\epsilon = 1,\ z = (1, 0)$}
         \label{1x2piEI}
         \end{subfigure}

         \begin{subfigure}{0.4\textwidth}
         \includegraphics[width = \linewidth,height = 3.5cm]{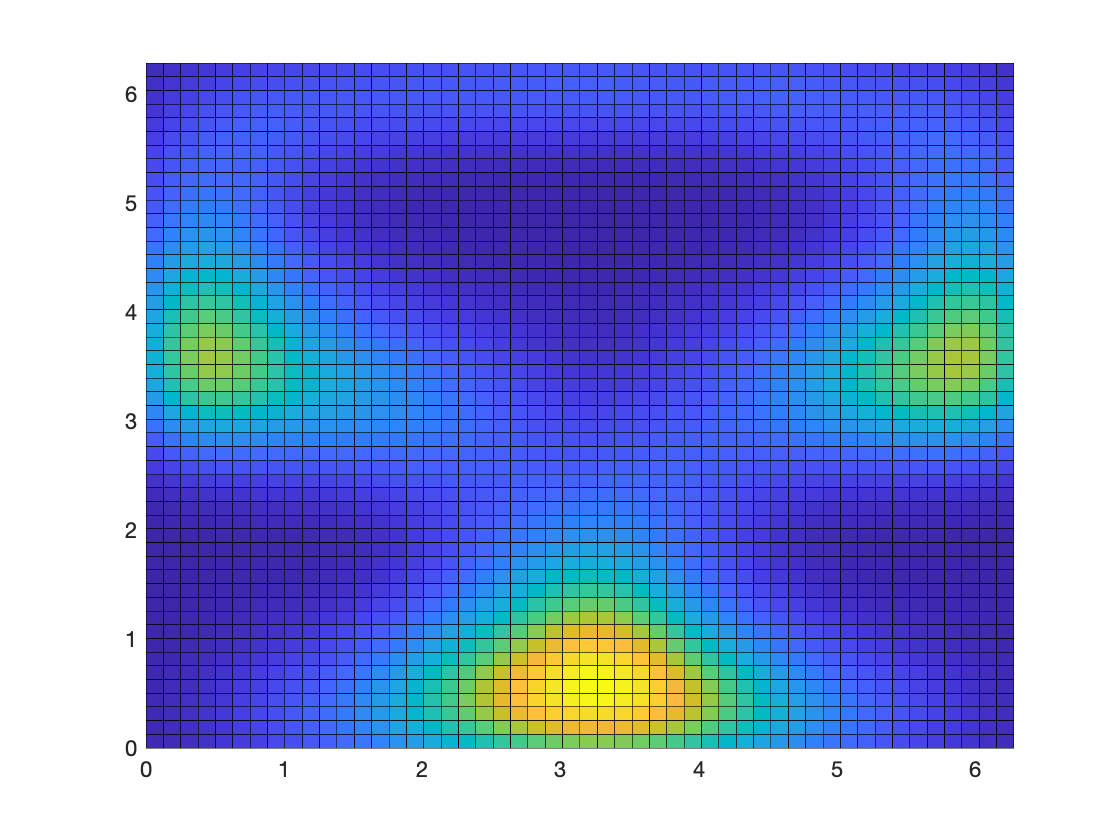}
         \caption{$\epsilon = 0,\ z = (0, 1)$}
         \label{0y2piEI}
         \end{subfigure}
         \begin{subfigure}{0.4\textwidth}
         \includegraphics[width = \linewidth,height = 3.5cm]{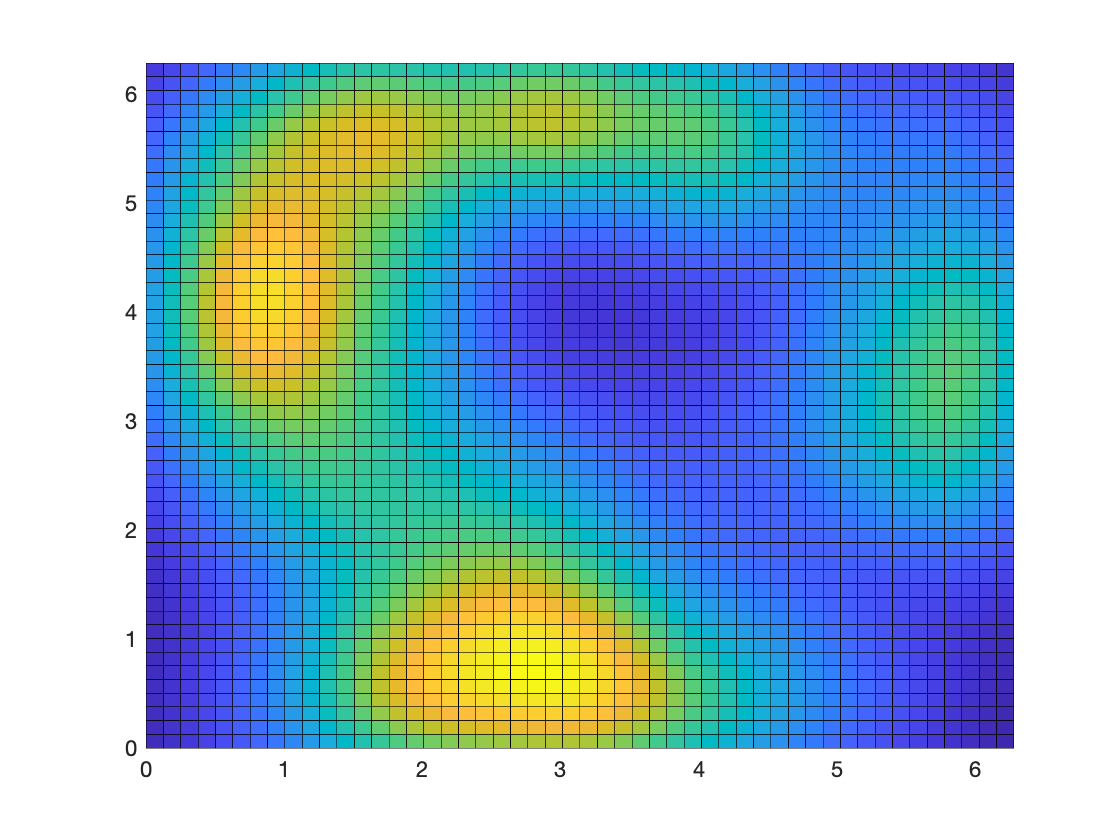}
         \caption{$\epsilon = 0.5,\ z = (0, 1)$}
         \label{05y2piEI}
         \end{subfigure}
	\caption{Comparison of invariant measure in 2D (random) cellular flow. }
		\label{2DCellEICompx}
\end{figure} 
%\\
%Results from 0801 to 0828
\subsubsection{2D random shear flow}
\noindent
In this part, we changed the velocity field into the generated random shear flow, i.e. 
\begin{equation}
    \bm v = v(x, y) = \delta \cdot (0,\  \xi(x)).
\end{equation}

With this random shear velocity field flow setting, we consider the front speed $c^*$ with direction $\bm z = (0, 1)$. After searching for the direction $\bm e$, we found that $\bm e = (1,0)$ can reach the front speeds. For $\bm e = (1, 0)$, we can analyze this case from the system
\begin{equation}
    q_t = \mathcal{A}q \overset{\text{def}}{=} \kappa \Delta_{\bm x} q + (-2\kappa \lambda \bm e + \bm v)\cdot \nabla_{\bm x} q + (\kappa \lambda^2 - \lambda \bm v \cdot \bm e + f'(0))q.
\end{equation}

Decompose the operator $\mathcal{A}$ into $\mathcal{A = L + C}$ where,
\begin{equation}
    \mathcal{L} = \kappa \Delta_{\bm x} + (-2\kappa \lambda \bm e + \bm v)\cdot \nabla_{\bm x}\ \text{and}\ 
    \mathcal{C} =(\kappa \lambda^2 - \lambda \bm v \cdot \bm e + f'(0)).
\end{equation}

When $\bm e = (1,0)$, we can see that $\bm v \cdot \bm e = 0$, the potential is always a constant. Only has a shift effect on particles and reflects on the front speed $c^*$. So no matter how we change the amplitude of the velocity field $\delta$, the front speed $c^*$ will always remain at a constant. 

So here we only consider the front speed $c^*$ with $\bm z = (0, 1)$ in this random shear flow setting. Additionally, at this time we do not limit the domain $\mathcal{D}$, but enlarging it into infinity ($\mathbb{R}^2$). In the following experiments with an unbounded domain setting, we initialize the particles with uniform distribution on $\mathcal{D}$. Then the particles can spread among unbounded domains and we give them sufficient time to move and spread out. By realizing the random shear velocity field with different random seeds, we compute the KPP front speeds $c^*$, and the results are shown in Figure \ref{2DshycRComplogc}.

\begin{figure}[h]
    \centering
    \includegraphics[width = 0.75\textwidth]{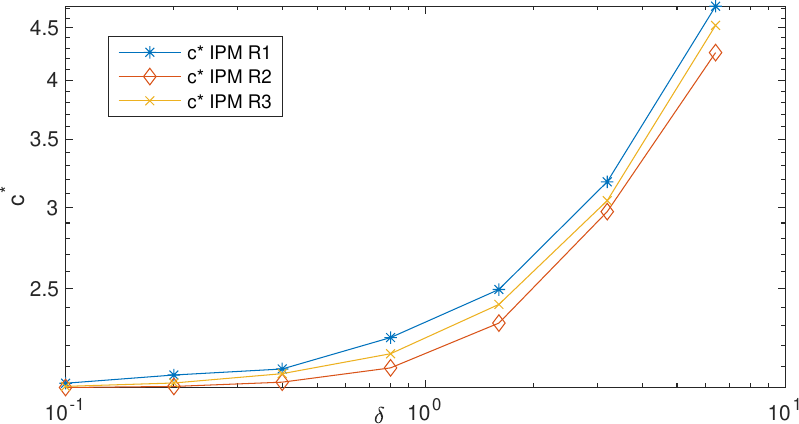}
    \caption{Log-log plot of $c^*$ vs. $\delta$ , $z = (0,1)$, 2D random shear flow by IPM.}
    \label{2DshycRComplogc}
\end{figure}

\subsubsection{3D random ABC flow}
\noindent
Here we change the velocity field into 3D ABC flow with random perturbation, i.e. the velocity field is changed into 
\begin{equation}
    v(x,y,z) = \delta \cdot \big(\sin(z) + \cos(y),\ \sin(x) + \cos(z) + \epsilon \cdot \xi(x),\  \sin(y) + \cos(x)\big).
\end{equation}

And here we set $n = 512,\ \Delta t = 2^{-11}$, $N = 100,000$ and $ \epsilon = 0$ firstly. Then we compute the front speed $c^*$ with three realizations and take their average. 
\begin{figure}[h]
    \centering
    \includegraphics[width = 0.7\textwidth]{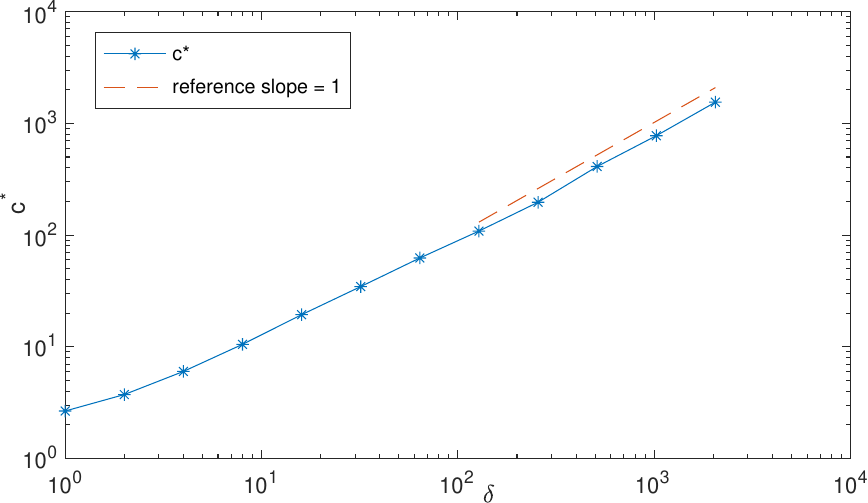}
    \caption{Log-log plot of $c^*$ vs. $\delta$ , $\epsilon = 0, z = (0,1,0)$, 3D ABC flow.}
    \label{0y3DABClogc}
\end{figure}

From Figure \ref{0y3DABClogc} we can see the result that the fitted curve's slope in log scale at relative large $\delta$ is 0.9842, i.e. $c^* = O(\delta^{0.9842}) \approx O(\delta^1)$. This means the fitted curve's slope is close to the reference curve with slope $= 1$ in log scale at relatively large $\delta$. This result is consistent with the result in \cite{shen2013finite3d}. 

Now if we change the random coefficient $\epsilon$ to make it not equal to 0 and compute the front speed $c^*$ corresponding to these cases. Then we will find in Figure \ref{3DABCyEpsComp} that, with the addition of a random term and increasing its amplitude, $c^*$ in 3D ABC flow will get smaller and smaller. This is because the random term gradually destroys the principal vortex structure \cite{DFGHMS} (ballistic orbits, \cite{XYZ,KLX_21,MXYZ_16}) in the ABC flow.

\begin{figure}[h]
    \centering
    \includegraphics[width = 0.75\textwidth]{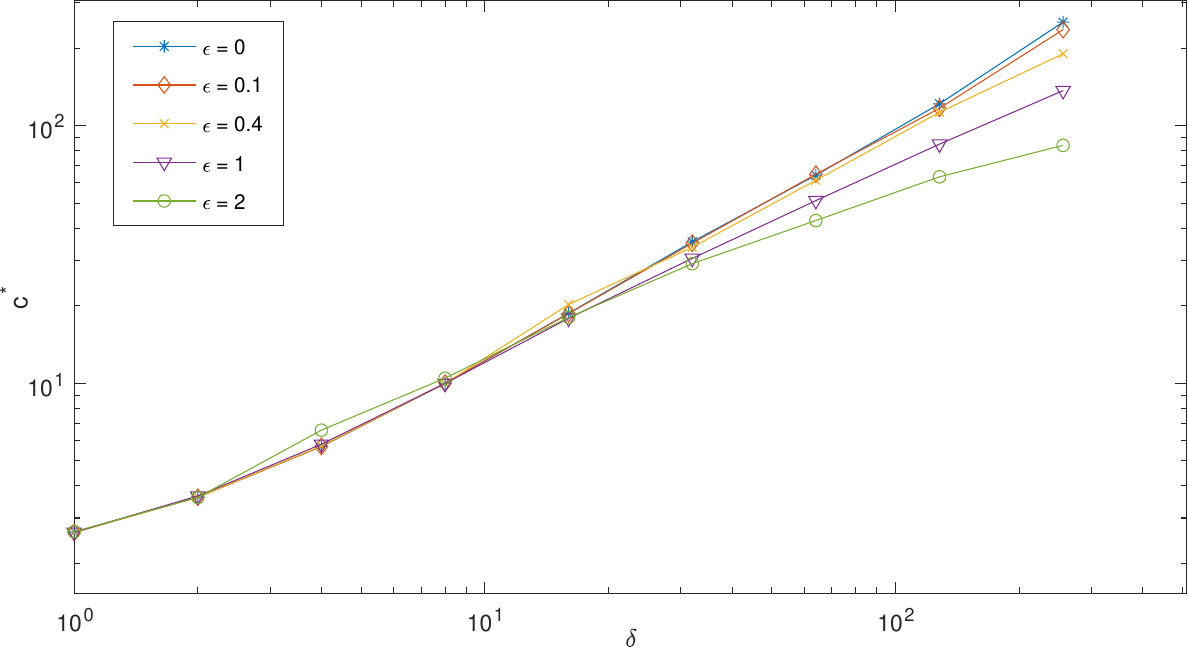}
    \caption{Log-log plot of $c^*$ vs. $\delta$ with different $\epsilon$, 3D random ABC flow.}
    \label{3DABCyEpsComp}
\end{figure}

\subsubsection{3D random cellular flow}
\noindent
Here we consider the 3D cellular flow with random perturbation, the velocity field is changed into $v(x,y,z) =$
\begin{equation}
     \delta \cdot \big(-\sin(x)\cos(y)\cos(z),\ -\sin(y)\cos(x)\cos(z) + \epsilon \cdot \xi(x),\ 2\sin(z)\cos(x)\cos(y)\big).
\end{equation}

Here we set $n = 512,\ \Delta t = 2^{-11}$, $N = 100,000,\ \epsilon = 0, 0.1, 0.4, 1, 2$ and compute their corresponding $c^*$.

\begin{figure}[h]
    \centering
    \includegraphics[width = 0.75\textwidth]{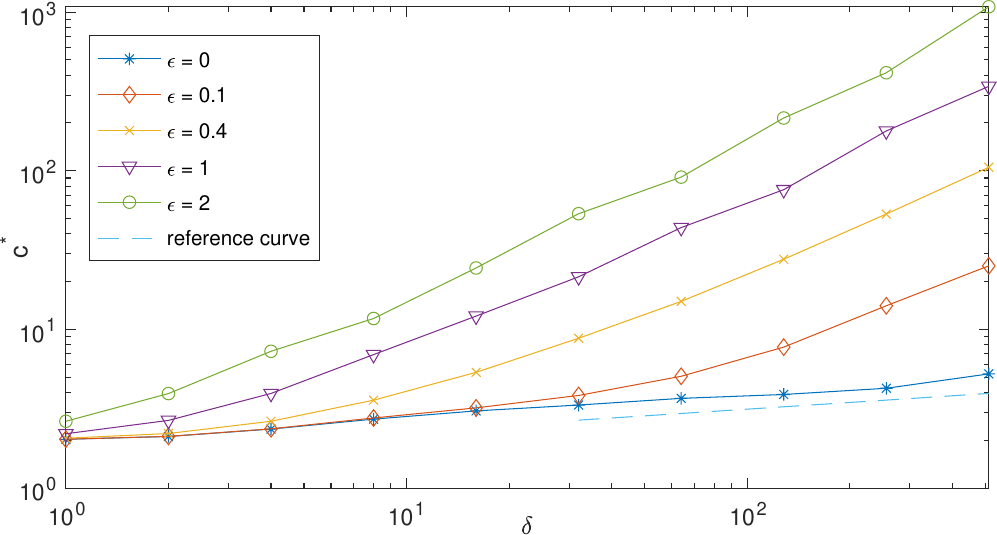}
    \caption{Log-log plot of $c^*$ vs. $\delta$ at different $\epsilon$, 3D random cellular flow.}
    \label{3DCellyEpsComp}
\end{figure}

In Figure \ref{3DCellyEpsComp}, the reference curve has a slope of 0.13. We can see that when $\epsilon = 0$, the fitted curve has the slope 0.1351 and this result is consistent with the result in \cite{shen2013finite3d}. As the random perturbation strengthens by tuning up $\epsilon$, the corresponding 
slopes of the fitted curves and 
$c^*$ values are getting larger. The addition of the random perturbation term destroys the closed streamline structures in 3D cellular flow (reduces trapping) and enhances particle transport thereby $c^*$ increases.
%breaks the regularity of $c^* \approx O(\delta^{0.13})$.

\subsection{IPM with dynamic shift for particles on $\mathbb{R}^d$}
By removing the boundary restriction step in Algorithm \ref{Algo::ForTimeindependent}, IPM can be readily implemented for computing $c^*$ on $\mathbb{R}^d$. In order to investigate whether the $c^*$ computed by IPM with gradually increasing domain sizes will converge to unbounded domain results, we conduct experiments on the randomly perturbed 2D cellular flow\eqref{2DRanCellFlow}. Fig. \ref{2DSpDomcon05yP} shows that as the domain size becomes larger, the $c^*$ computed with the same random realization converges to the direct result on the unbounded domain. 

\begin{figure}[h]
    \centering
    \includegraphics[width = 0.75\textwidth, height=5.5cm]{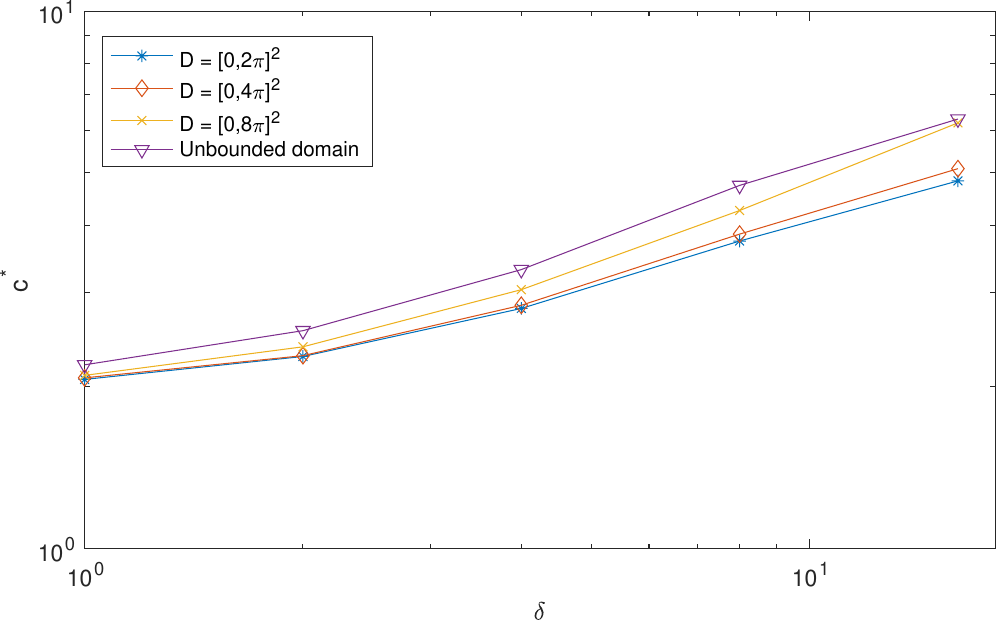}
    \caption{Log-log plot of $c^*$ vs. $\delta$ as domain size increases, $\epsilon = 0.5$, 2D random cellular flow.}
    \label{2DSpDomcon05yP}
\end{figure}

In the convergence analysis of section \ref{sec:ConvergenceAnalysis}, we require %the major premise of 
a bounded domain (e.g. $\mathcal{D}$) to guarantee the existence of the principal eigenvalue and carry out the subsequent $c^*$ error analysis. In a future
theoretical analysis on unbounded domains,  we will work with the Lyapunov exponent as a limit of principal eigenvalue in the infinite volume limit of $\mathcal{D}$ 
(as $\text{vol}\,(\mathcal{D}) \uparrow \infty$)
%this one premise gap in the future work, e.g. 
similar to the periodization approach to stochastic  homogenization \cite{Otto2012,Otto2015}
. Our IPM can be adapted (dropping domain restriction in step 4) to compute particle distribution on unbounded domains ($\mathbb{R}^d$) directly. Different from the $\mathcal{D}$ case, the convergence to an invariant measure requires a dynamic shift to remove a mean flow-induced drift.

The SDE system \eqref{SDEsys}, due to $\bm{b} = -2\kappa \lambda\bm{e} + \bm{v}$ and $\bm{v}$ being mean zero, has a nonzero mean drift $-2\kappa \lambda \bm{e}$. If we do not have particle motion limited on a bounded domain, the particles spread due to nonzero mean drift and do not converge to an invariant measure at large times. To make sense of convergence, we introduce a dynamic shift to center the particle ensemble 
%distribution $\bm \xi^0$ 
%evolution 
over $j$ generations (each of duration $T$).  
%To observe the diffusion of particles, we need to observe 
%We record the mean and variance of the  particle positions.  
%Since $\bm v$ is mean zero from our random field realization, the particles have a general tendency to drift according to $-2\kappa \lambda \bm e$. During their evolution, %\eqref{evoProMod},
%to get a series of 
%Let the particles 
%distribution $\{\bm \xi_{0}^j \}_{j=0}^n$. 
At each $j$, we shift all particles in the ensemble by 
$2 j \kappa \lambda T$. Due to the stationarity of the random flows, 
the statistics of the particle evolution are invariant. 
After the principal eigenvalue approximations have converged, we post-process and map the particle ensemble back 
to a bounded domain for visualization. For example $\mathbb{T}^d = [0, 2\pi]^d$, then map with %We use $Mod(\cdot)$ to denote this process, i.e.
%\begin{equation}
 function $mod(\cdot)$,
% = (mod(x_1^{l,n}, \mathbb{T}),\cdots, mod(x_d^{l,n}, \mathbb{T}))$,
%\end{equation}
%where $mod(\cdot)$ represents a one-
the multi-dimensional modulo operation.
%to guarantee that the particles lie in $\mathcal{D}$. 
%We post-process for each particle at the final time to arrive at a distribution supported in the bounded domain $\mathcal{D}$. 
To summarize, the particle distribution at the output of the dynamically shifted IPM without domain restriction on $\mathbb{R}^d$ converges to an invariant measure subject to deterministic mapping.

\subsubsection{2D random shear flow}
\begin{equation}
    \bm v = v(x, y) = \delta \cdot (0,\  \xi(x)).
\end{equation}

We analyze IPM performance in the two cases: $\bm e = (1,0)$ and $\bm e =(0,1)$. When $\bm e = (0, 1)$, $\bm v \cdot \bm e = \delta  \cdot \xi(x)$, the eigen-function only depends on $x$. In operator $\mathcal{L}$, $(-2\kappa \lambda \bm e + \bm v) = (0,\ \delta \cdot \xi(x) - 2\kappa \lambda)$ and $\nabla_{\bm x} = (\cdot, 0)$. The inner product of the two terms is zero. This operator is then a 1D random Schrodinger operator in $x$ with well-known localization property. And when $\bm e = (1, 0)$, $\bm v \cdot \bm e = 0$, the potential is always a constant, adding a translation on the particle. To see what is the effect of adding space random perturbation term on the 2D cellular flow, we investigate how the invariant measure (particle distribution) changes after we dynamically shift them. Here we set $\delta = 2^2$, $N = 200,000$, and other parameters remain the same.

\begin{figure}[h]
    \centering
    \includegraphics[width = \textwidth,height=5.8cm]{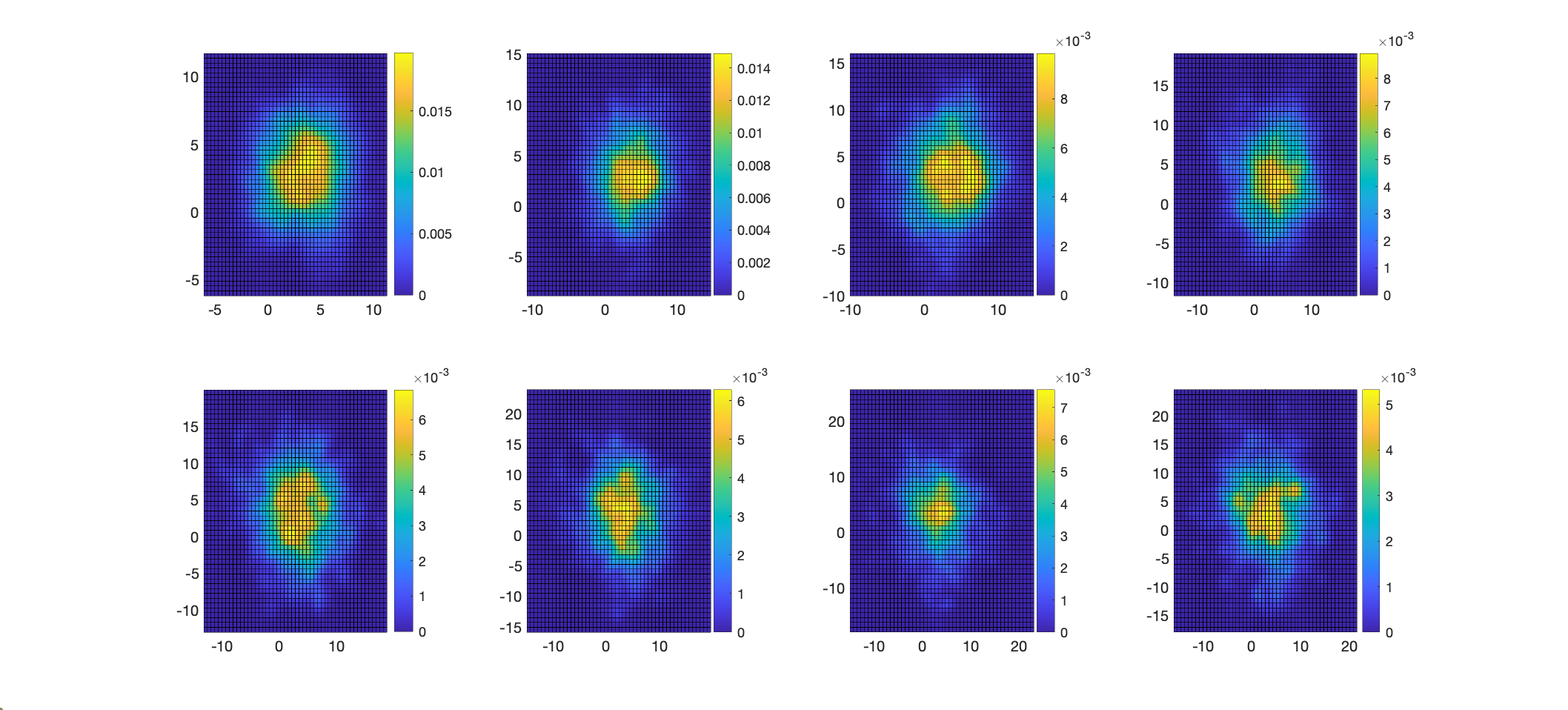}
    \caption{Particles distribution under dynamic shift, $t = 8$ to $t = 64$, $e = (1,0)$}
    \label{2Dsh01xEIT64}
\end{figure}

Here we can see from Figure \ref{2Dsh01xEIT64} %and Figure \ref{2Dsh01yEIT64} 
that, at the early stage of the experiment, the particles are continually spreading outwards from the initial area. A similar case occurs with $\bm e=(0,1)$. For the performance of the particle distribution at long times, we use the re-scaled center $E(\cdot)$ and the second-order momentum $D(\cdot)$ of the particles to express them. We take their average value over ten realizations.
In Figure \ref{2Dsh01xCovMatT1024}, the order of the diffusion in direction $x$ and in direction $y$ are respectively $D(x) = O(t^{0.850})$ and $D(y) = O(t^{0.945})$. 
% The mean value $E(x)$ and $E(y)$ have no obvious relationship with time $t$ and will vary with the random seed, Figure \ref{2Dsh01xMeanEDT1024} shows the results of one of these realizations
In Figure \ref{2Dsh01yCovMatT1024}, the order of the diffusion in direction $x$ and in direction $y$ are respectively $D(x) = O(t^{-0.013}) \approx O(t^0)$ and $D(y) = O(t^{0.742})$. %In Figure \ref{2Dsh01yMeanEDT1024}, we can see that $E(y)$ decrease almost linearly with respect to $t$, while $E(x)$ has no obvious relationship with $t$.
We also perform common statistical tests of the particle distribution at $t = 1024$ (e.g. QQplot and KS test), and the results all show that the particle distribution is non-Gaussian. In fact, according to the variation of the variance $D(x), D(y)$ with time $t$, it can be seen that the diffusion of the particles after the re-center process exhibits sub-diffusion phenomena. 

\begin{figure}[h]
    \centering
    \begin{subfigure}{0.46\textwidth}
    \includegraphics[width =\textwidth]{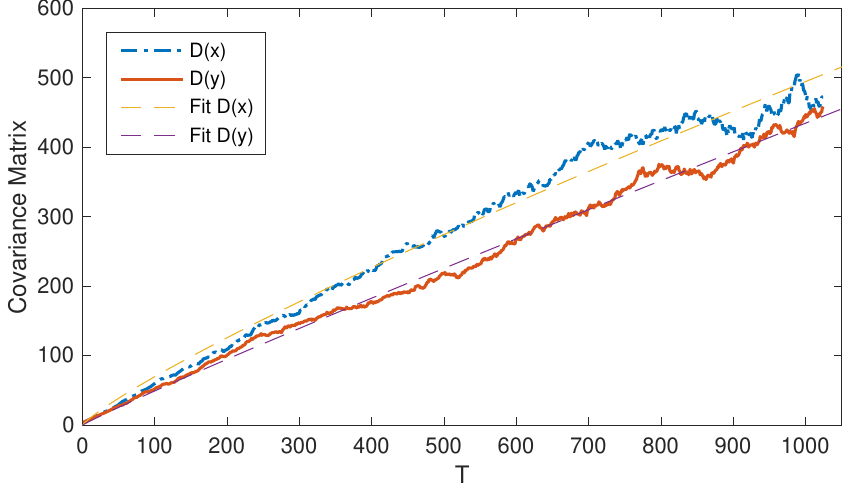}
\caption{$\bm e = (1,0)$}
\label{2Dsh01xCovMatT1024}
\end{subfigure}
\begin{subfigure}{0.46\textwidth}
    \includegraphics[width = \textwidth]{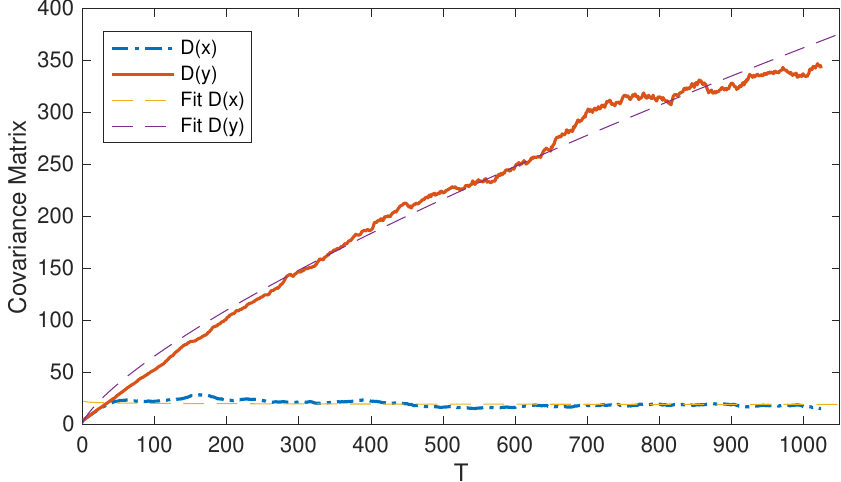}
    \caption{$\bm e = (0,1)$}
    \label{2Dsh01yCovMatT1024}
\end{subfigure}
\caption{Covariance matrix evolution with different $\bm e$, from $t = 0$ to $t = 1024$ }
\end{figure}

\begin{figure}[h]
\centering
\begin{subfigure}{0.46\textwidth}
    \includegraphics[width = \linewidth,height=3.6cm]{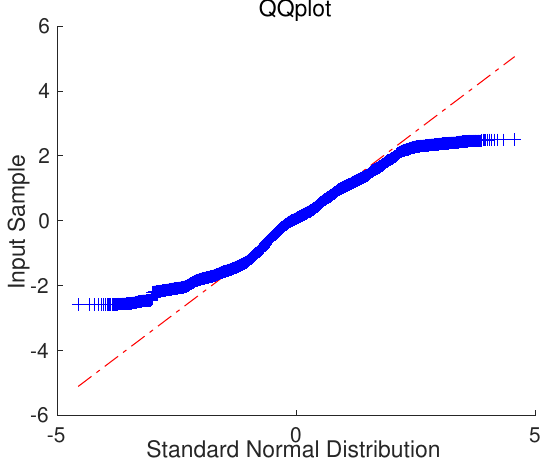}
    \caption{QQ plot in $x$, $\bm e = (1, 0)$}
    \label{qqplotxe10}
\end{subfigure}
\begin{subfigure}{0.46\textwidth}
    \includegraphics[width = \linewidth,height=3.6cm]{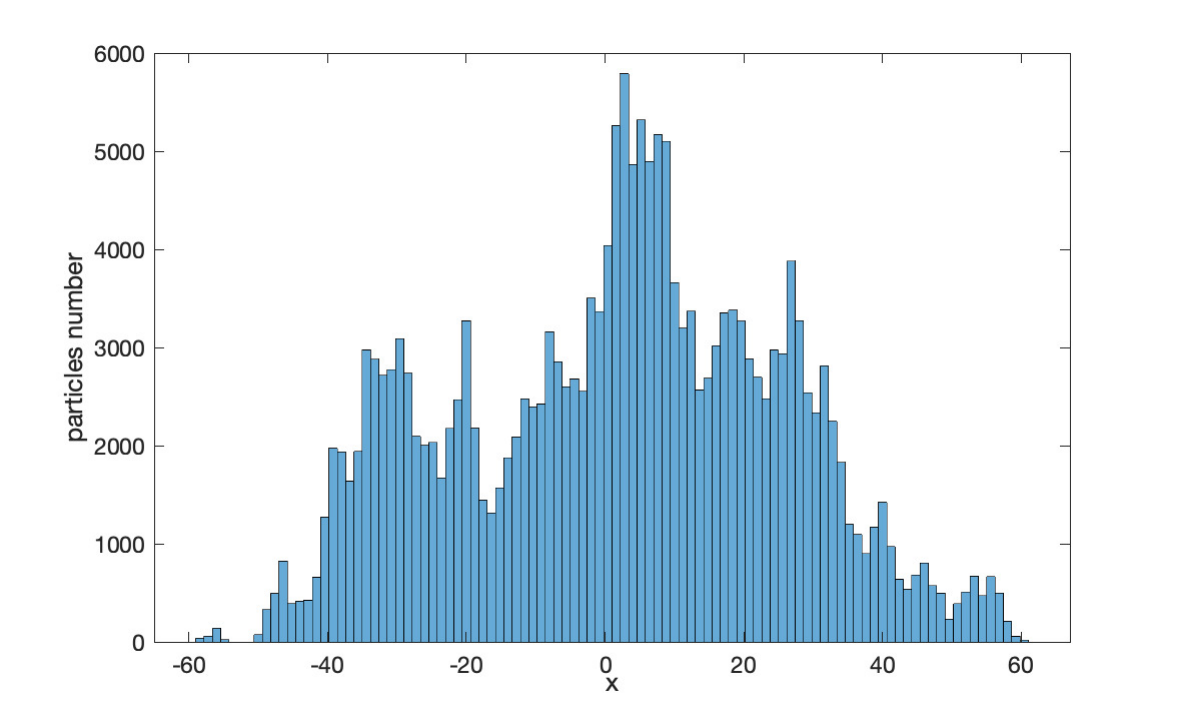}
    \caption{histogram in $x$, $\bm e = (1, 0)$}
    \label{histogram01xcx}
\end{subfigure}
\caption{QQ plot and histogram plot for random shear flow, $\bm e = (1,0), t = 1024$}
\label{qqplote10}
\end{figure}

\subsubsection{3D randomly perturbed ABC flow}
\begin{equation}
    v(x,y,z) = \delta \cdot \big(\sin(z) + \cos(y),\ \sin(x) + \cos(z) + \epsilon \cdot \xi(x),\  \sin(y) + \cos(x)\big).
\end{equation}

Here we plot the distribution of the particles with different random coefficients in $\mathbb{T}^3$. It can be shown from Figure \ref{3DABCyNB3viewCompP}, especially the $xz$-plane that as the coefficient of the random term gradually increases, the radius of the exhibited tube structure becomes larger and larger with the same total number of particles, which means that the distribution of particles becomes more and more dispersed and the tube structure is more and more severely damaged. A similar case also occurs if we change the system with an unbounded domain setting and at the final plot step we restrict the distribution of the particles on $\mathbb{T}^3$.

\begin{figure}[tbph]
    \centering
    \begin{subfigure}{\textwidth}
        \includegraphics[width = \linewidth]{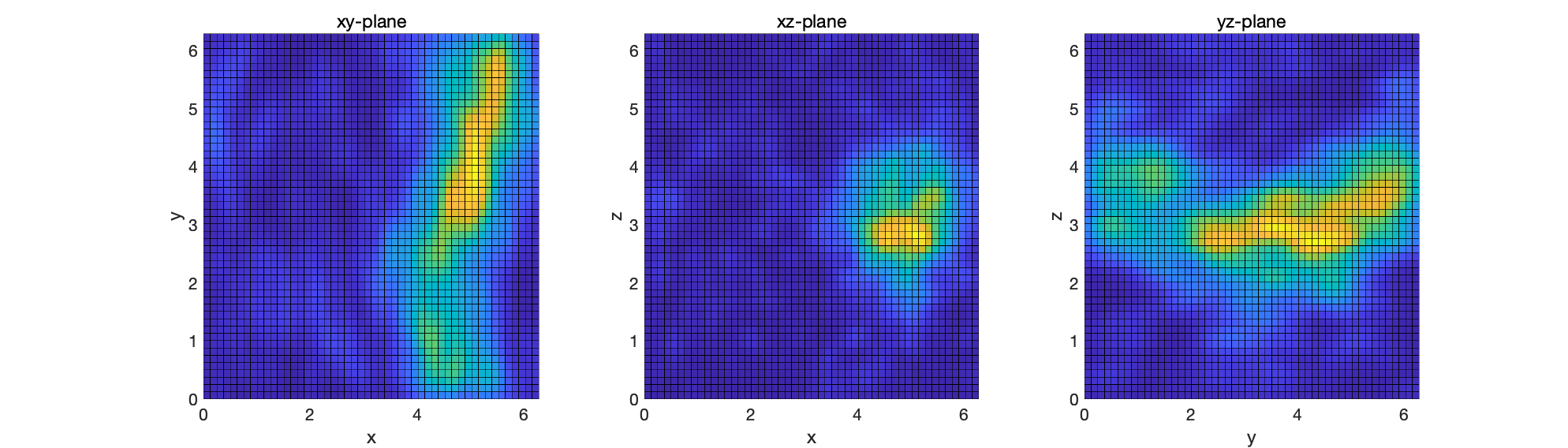}
        \caption{$\epsilon = 0$}
        \label{3DABC0y4pi3view}    
    \end{subfigure}

    \begin{subfigure}{\textwidth}
        \includegraphics[width = \linewidth]{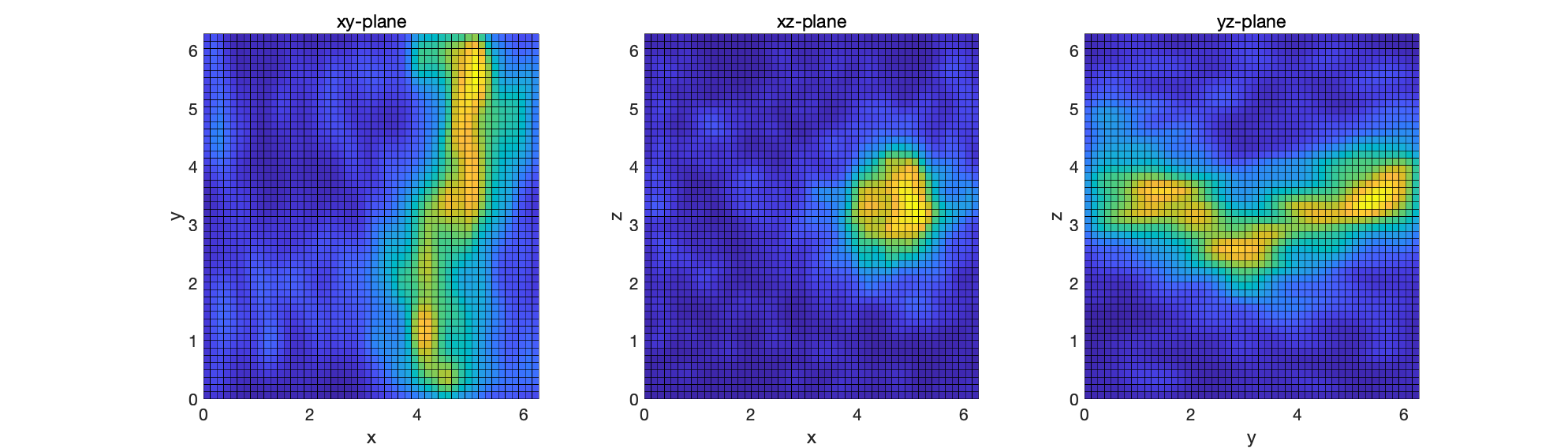}
        \caption{$\epsilon = 0.4$}
        \label{3DABC04y4pi3view}    
    \end{subfigure}

    \begin{subfigure}{\textwidth}
        \includegraphics[width = \linewidth]{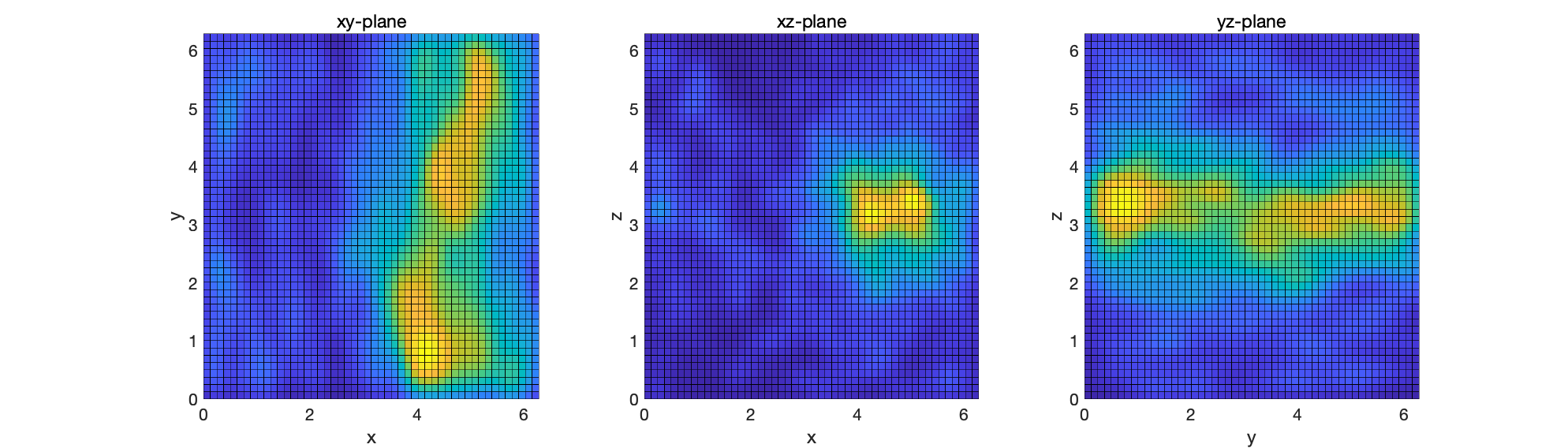}
        \caption{$\epsilon = 2$}
        \label{3DABC2y4pi3view}    
    \end{subfigure}
    
    \caption{Three planar views of particle distributions for randomly perturbed ABC flow at  $\epsilon=0,0.4,2$, showing increased blurring/broadening effect of perturbations.} %unbounded
    \label{3DABCyNB3viewCompP}
\end{figure}

\section{Conclusions}\label{sec:Conclusion}
\noindent In this paper, we developed an efficient IPM to compute KPP front speeds $c^*$ in 2D and 3D stochastic flows, with convergence analysis based on the random Fourier method. We first computed $c^*$
%the KPP front speeds 
in periodic flows, including 2D cellular flow and 3D ABC flow, to reach agreement with semi-Lagrangian and spectral methods. 
%Without random perturbations, the KPP front speeds calculated by several methods match each other. 
%After adding random perturbations, 
Then we computed and analyzed the effect of random perturbations on $c^*$.
%several different flows' KPP front speeds, including 
As the perturbation strengths increase, 
%the random perturbations increase, 
we observed that the vortical structures (ballistic orbits) \cite{DFGHMS,XYZ,MXYZ_16,KLX_21}  in 3D ABC flow are collapsing, 
%destroyed ever more severely, 
resulting in smaller $c^*$. 
%In addition, compared to numerical methods which contain an Eulerian framework such as the Semi-Lagrangian method, the interacting particles method 
Compared with mesh-based methods, IPM 
has several striking advantages: (1) it is scalable  in spatial dimension and can handle stochastic 3D problems at ease; 
(2) it is scalable in domain size and can handle infinite domain as well as Lyapunov exponent approximation directly. In experiments on $\mathbb{R}^3$, the empirical particle distributions from IPM show convergence under dynamic normalization while the average particle fitness approximates the Lyapunov exponent.

\section*{Acknowledgements}
\noindent  JX was partially supported by NSF grant DMS-2309520. ZZ was supported by the National Natural Science Foundation of China (Project 12171406), the Hong Kong RGC grant (Projects G-HKU702/18, 17307921 and 173 04324), the Outstanding Young Researcher award of HKU (2020-21), Seed Funding for Basic Research, and Seed Funding for Strategic Interdisciplinary Research Scheme 2021/22 (HKU).

\appendix  

\section{Convergence of perturbed $\mu$}
\label{Sec:AppendixE3proof}
We review the perturbation theory of linear operators with the following notation:
\begin{equation}
    T(a) = T + a\cdot T'.
\end{equation}
Here $T$ is the original operator, $T'$ is the perturbation operator and $a$ is the coefficient of the perturbation term. Then we have the following property 
\begin{pro}
    Let $T(a)$ be continuous at $a=0$, then the eigenvalues of $T(a)$ are continuous at $a = 0$. If $\lambda$ is an eigenvalue of $T(0) = T$, then the $\lambda$-group is well-defined for sufficiently small $|a|$ and the total projection $P(a)$ for the $\lambda$-group is continuous at $a=0$. 
    \label{PerbCtsEigTx}
\end{pro}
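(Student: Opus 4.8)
The statement is the classical eigenvalue-perturbation result of Kato, and the plan is to reprove it by the Riesz-projection (contour-integral) method. First I would use that $\lambda$ is an \emph{isolated} point of the spectrum $\sigma(T)$ with finite algebraic multiplicity: this lets me pick a positively oriented rectifiable closed curve $\Gamma \subset \rho(T)$ that encircles $\lambda$ and separates it from $\sigma(T) \setminus \{\lambda\}$. On the compact set $\Gamma$ the resolvent $R(\zeta) = (T - \zeta)^{-1}$ is uniformly bounded, say $\sup_{\zeta \in \Gamma}\|R(\zeta)\| = C_\Gamma < \infty$.

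Next I would turn the hypothesis ``$T(a)$ is continuous at $a=0$'' into resolvent convergence. When the perturbation $T'$ is bounded this is immediate: the second resolvent identity gives $R(\zeta,a) = R(\zeta)(I + a T' R(\zeta))^{-1}$, which is well defined and depends analytically (in particular norm-continuously) on $a$, uniformly for $\zeta \in \Gamma$, as soon as $|a|\,\|T'\|\,C_\Gamma < 1$; in the relatively-bounded case relevant here one instead invokes the standard equivalence that generalized convergence $T(a) \to T$ implies $R(\zeta,a) \to R(\zeta)$ in operator norm for each fixed $\zeta \in \rho(T)$, and then upgrades this to uniform convergence on the compact contour $\Gamma$ by equicontinuity. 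Either way, $\Gamma \subset \rho(T(a))$ for all sufficiently small $|a|$, so the total (Riesz) projection $P(a) = -(2\pi i)^{-1}\oint_\Gamma R(\zeta,a)\,d\zeta$ is well defined, is a bounded projection commuting with $T(a)$, and its range is the direct sum of the generalized eigenspaces of those eigenvalues of $T(a)$ enclosed by $\Gamma$.

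Then I would conclude in three short steps. (i) Passing the uniform limit on $\Gamma$ through the contour integral yields $\|P(a) - P(0)\| \to 0$ as $a \to 0$ --- this is exactly the claimed continuity of the total projection. (ii) Once $\|P(a) - P(0)\| < 1$, the elementary lemma that two projections at norm-distance less than $1$ have isomorphic ranges forces $\dim \mathrm{Ran}\,P(a) = \dim \mathrm{Ran}\,P(0) < \infty$; hence the eigenvalues of $T(a)$ inside $\Gamma$, counted with algebraic multiplicity, always total this fixed finite number, so the $\lambda$-group is well defined for small $|a|$. (iii) Running the construction with a nested family of contours $\Gamma_\varepsilon$ of radius $\varepsilon$ about $\lambda$: for each $\varepsilon$ there is $\delta(\varepsilon)>0$ so that $|a|<\delta(\varepsilon)$ keeps all $\lambda$-group eigenvalues inside $\Gamma_\varepsilon$, i.e. within $\varepsilon$ of $\lambda$ --- which is the continuity of the eigenvalues at $a=0$.

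The step I expect to be the real obstacle is the second one in the case relevant to this paper, where $T'$ is not bounded but only $T$-bounded (the discrepancy $\bm v - \tilde{\bm v}$ enters through first- and zeroth-order terms relative to $\kappa\Delta_{\bm x}$). There the naive Neumann series in $a$ is unavailable, and one must work with the graph norm / gap topology, make precise that ``continuity of $T(a)$ at $a=0$'' means generalized convergence, and cite the theorem that generalized convergence of closed operators is equivalent to norm-resolvent convergence. Once that translation is in hand, the contour-integral estimate, the rank-stability lemma, and the shrinking-contour argument are all routine.
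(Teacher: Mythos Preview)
Your proposal is a correct and complete sketch of the classical Riesz--projection argument from Kato's monograph, and indeed that is the standard route to this result. However, the paper does not supply its own proof of this proposition at all: Appendix~\ref{Sec:AppendixE3proof} merely \emph{reviews} the perturbation theory of linear operators and records this statement (together with Definition~\ref{Sec:ApendixDefdist}, Definition~\ref{defGenCon}, and Propositions~\ref{SuffCondGenCon} and~\ref{RelaPerOpandEig}) as background quoted from \cite{kato2013perturbation}, to be invoked later in the proofs of Proposition~\ref{EigExistforrandomperturbation1} and Theorem~\ref{Thm::ConvOfPrinEigofRFM}. So there is nothing to compare against beyond the citation itself, and your contour-integral argument is precisely the one Kato gives.

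One small remark: you correctly anticipate that the relevant case here is the relatively-bounded one (the perturbation enters through $(\bm v - \tilde{\bm v})\cdot\nabla_{\bm x}$ and a zeroth-order term), and the paper handles exactly this by invoking Proposition~\ref{SuffCondGenCon} to obtain generalized convergence, then Proposition~\ref{RelaPerOpandEig} to pass to eigenvalue stability. Your identification of generalized convergence $\Leftrightarrow$ norm-resolvent convergence as the key translation is on the mark and matches how the paper deploys these quoted results.
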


Next, we would like to introduce some notations we use to describe the convergence between operators in the generalized sense.
\begin{dfn}
\label{Sec:ApendixDefdist}
Consider closed linear manifolds $M$ and $N$ of a Banach space $Z$. We use $S_M$ to denote the unit sphere of $M$ (the set of all $v \in M$ which satisfy $||v|| = 1$). For any two closed linear manifolds $M, N$ of $Z$, we define
\begin{equation}
    \delta(M, N) = \sup_{u \in S_M} dist(u,M),
\end{equation}
\begin{equation}
    \tilde{\delta}(M,N) = \max [\delta(M, N),  \delta(N, M)]^1.
\end{equation}
\end{dfn}

\begin{dfn}
    We denote that $T_n$ converge to $T$ in the generalized sense if $\tilde{\delta} (T_n, T) \to 0$.
    \label{defGenCon}
\end{dfn}

There is a proposition that is a sufficient condition for the generalized convergence and its proof can be found in \cite{kato2013perturbation}.
\begin{pro}
    Let $T \in \mathcal{C}(X, Y)$. Let $A_n, n=0,1,2,\cdots$, be $T$-bounded so that $||A_n v|| \leq a_n ||v|| + b_n ||Tv||$ for $v \in D(T) \subset D(A_n)$. If $a_n \to 0$ and $b_n \to 0$, then $T_n = T + A_n \in \mathcal{C}(X,Y)$ for sufficiently large $n$ and $T_n \to T$ in the generalized sense. 
    \label{SuffCondGenCon}
\end{pro}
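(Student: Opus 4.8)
This is the classical stability statement for generalized convergence under a relatively bounded perturbation whose relative bound $b_n$ \emph{and} absolute bound $a_n$ both tend to zero; the plan is to (i) first upgrade $T_n=T+A_n$ to a closed operator for large $n$ using the smallness of $b_n$, and then (ii) estimate the gap $\tilde\delta\big(G(T_n),G(T)\big)$ between the graphs in $X\times Y$ directly, exploiting that $T_n$ and $T$ share the \emph{same} domain $D(T)$. Throughout one works with the product norm $\|(u,w)\|=(\|u\|^2+\|w\|^2)^{1/2}$ on $X\times Y$ and recalls, from Definitions \ref{Sec:ApendixDefdist} and \ref{defGenCon}, that ``$T_n\to T$ in the generalized sense'' means $\tilde\delta\big(G(T_n),G(T)\big)\to 0$. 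For the closedness claim: since $b_n\to 0$, there is $n_0$ with $b_n<1$ for $n\ge n_0$, so $A_n$ is $T$-bounded with $T$-bound strictly less than $1$, and the standard perturbation-of-closedness theorem (\cite{kato2013perturbation}, Ch.~IV) gives that $T_n=T+A_n$ with domain $D(T_n)=D(T)$ is closed, i.e.\ $T_n\in\mathcal{C}(X,Y)$.

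For the generalized convergence I would bound the two one-sided gaps separately. First, given a unit graph vector $(v,Tv)\in G(T)$ (so $\|v\|^2+\|Tv\|^2=1$ and $v\in D(T)=D(T_n)$), the point $(v,T_nv)=(v,Tv+A_nv)$ lies in $G(T_n)$, so $\mathrm{dist}\big((v,Tv),G(T_n)\big)\le \|A_nv\|\le a_n\|v\|+b_n\|Tv\|\le\sqrt{a_n^2+b_n^2}$ by Cauchy--Schwarz; taking the supremum, $\delta\big(G(T),G(T_n)\big)\le\sqrt{a_n^2+b_n^2}\to 0$. Conversely, given a unit graph vector $(v,T_nv)\in G(T_n)$ (so $\|v\|\le 1$, $\|T_nv\|\le 1$), one first derives the a priori bound $\|Tv\|\le\|T_nv\|+\|A_nv\|\le 1+a_n\|v\|+b_n\|Tv\|$, hence $\|Tv\|\le(1+a_n)/(1-b_n)$ for $n\ge n_0$; then $\mathrm{dist}\big((v,T_nv),G(T)\big)\le\|A_nv\|\le a_n\|v\|+b_n\|Tv\|\le a_n+b_n(1+a_n)/(1-b_n)\to 0$, so $\delta\big(G(T_n),G(T)\big)\to 0$. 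Combining, $\tilde\delta\big(G(T_n),G(T)\big)=\max\big[\delta(G(T_n),G(T)),\delta(G(T),G(T_n))\big]\to 0$, which is precisely $T_n\to T$ in the generalized sense, completing the argument.

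I do not expect a serious obstacle here: the result is essentially a bookkeeping exercise with Cauchy--Schwarz once the graph-gap formulation is in place. The one point that requires care is the ``reverse'' estimate in the second paragraph, where one must produce the uniform bound $\|Tv\|\le(1+a_n)/(1-b_n)$ on graph vectors of $T_n$ \emph{before} estimating $\|A_nv\|$ — this is exactly why $b_n<1$, and hence the restriction to $n\ge n_0$, is needed — together with remembering that for (possibly unbounded) $T$ and $T_n$, ``generalized convergence'' is convergence of the graphs in the gap metric of $X\times Y$, not norm convergence of the operators themselves.
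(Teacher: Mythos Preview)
Your proof is correct and follows the standard graph-gap argument; the paper itself does not supply a proof of this proposition but simply cites \cite{kato2013perturbation}, where essentially the same reasoning appears. In short, you have reproduced the Kato proof the paper defers to.
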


Next, we would like to review some results which can make the above propositions applicable to an infinite system. Here we use $\Sigma(T)$ to denote the spectrum of the operator $T \in \mathcal{C}(X)$ and we suppose that the spectrum $\Sigma(T)$ has an isolated point $\lambda$. Then obviously we can divide $\Sigma(T)$ into two parts $\Sigma'(T)$ and $\Sigma''(T)$. Here $\Sigma'(T)$ consists of the single point $\lambda$ and any simple closed curve $\Gamma$ enclosing $\lambda$ but no other point in $\Sigma(T)$ may be chosen. Following this idea, we can divide the spectrum of an infinity system and focus on the finite system $\Sigma'(T)$ which contains the principal eigenvalue and we have the following proposition to analyze the continuity of a finite system of eigenvalues.
\begin{pro}
    The change of a finite system $\Sigma'(T)$ of eigenvalues of a closed operator $T$ is small (in the sense of \ref{PerbCtsEigTx}) when $T$ is subjected to a small perturbation in the sense of generalized convergence. 
    \label{RelaPerOpandEig}
\end{pro}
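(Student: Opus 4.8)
The plan is to reduce the statement to the norm continuity of the resolvent under generalized convergence, and then to propagate that continuity through the Riesz contour integral that defines the total projection. Since $\Sigma'(T)$ is a finite system, separated from the remainder $\Sigma''(T)$ of the spectrum, the spectral gap lets us fix a rectifiable Jordan contour $\Gamma$ (or a finite union of such) lying entirely in the resolvent set $\rho(T)$, with $\Sigma'(T)$ in its interior and $\Sigma''(T)$ in its exterior. The total projection of the $\Sigma'(T)$-group is then $P=\frac{1}{2\pi i}\oint_{\Gamma}(\zeta-T)^{-1}\,d\zeta$, a projection of finite rank $m$ equal to the total algebraic multiplicity of $\Sigma'(T)$. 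Everything to be shown --- that under a small perturbation $T(a)=T+a\,T'$ the $\Sigma'$-group stays well defined and its eigenvalues and total projection change only slightly, in the sense of Proposition~\ref{PerbCtsEigTx} --- will follow once $\Gamma$ is known to stay in the resolvent set of $T(a)$ with the resolvent converging in operator norm, uniformly on $\Gamma$, as $a\to0$.

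First I would establish this resolvent stability. That $T(a)$ is a small perturbation of $T$ in the sense of generalized convergence means $\tilde{\delta}(T(a),T)\to0$, i.e. the gap between the graphs of $T(a)$ and $T$ in $X\times X$ tends to $0$ (Definition~\ref{defGenCon}). The technical core is the estimate of Kato's Chapter~IV~\cite{kato2013perturbation} bounding the gap between the graphs of $T(a)-\zeta$ and $T-\zeta$ in terms of $\tilde{\delta}(T(a),T)$ and $(1+|\zeta|)\|(\zeta-T)^{-1}\|$, together with the fact that an operator whose graph is close to that of a boundedly invertible operator is itself boundedly invertible with a nearby inverse. Since $\Gamma$ is compact and $\zeta\mapsto\|(\zeta-T)^{-1}\|$ is continuous, hence bounded, on $\rho(T)\supset\Gamma$, these estimates hold uniformly for $\zeta\in\Gamma$. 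Consequently there is $a_0>0$ such that $|a|<a_0$ implies $\Gamma\subset\rho(T(a))$ and $\sup_{\zeta\in\Gamma}\|(\zeta-T(a))^{-1}-(\zeta-T)^{-1}\|\to0$ as $a\to0$. In the application to Theorem~\ref{Thm::ConvOfPrinEigofRFM} the perturbations are $T$-bounded with vanishing coefficients, so Proposition~\ref{SuffCondGenCon} already supplies the required generalized convergence and this step is carried out only once.

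Next I would transfer this to the spectral data. Define $P(a)=\frac{1}{2\pi i}\oint_{\Gamma}(\zeta-T(a))^{-1}\,d\zeta$ for $|a|<a_0$; the uniform resolvent convergence gives $\|P(a)-P\|\to0$. Each $P(a)$ is the total projection associated with $\Sigma'(T(a)):=\Sigma(T(a))\cap\mathrm{int}(\Gamma)$. Because $P$ has finite rank $m$ and projections that are close in norm have isomorphic ranges of the same dimension, $P(a)$ has rank $m$ for $|a|$ small; hence $\Sigma'(T(a))$ is a finite system of eigenvalues of $T(a)$ with total algebraic multiplicity $m$, so the $\Sigma'$-group is well defined. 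For the localization of the eigenvalues, given any open neighborhood $U$ of $\Sigma'(T)$ one may choose $\Gamma$ at the outset with $\mathrm{int}(\Gamma)\subset U$; then $\Sigma'(T(a))\subset U$ for $|a|$ small. Combined with $P(a)\to P$, this is exactly the assertion that the change of the finite system $\Sigma'(T)$ is small in the sense of Proposition~\ref{PerbCtsEigTx}, and since only the separated finite part enters the contour argument, the infinite-dimensional part $\Sigma''(T)$ causes no difficulty.

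The main obstacle is the first step, the resolvent estimate in the gap metric $\tilde{\delta}$: because $T$ and $T(a)$ are in general unbounded closed operators, their proximity is meaningful only through their graphs, and one must control $\|(\zeta-T(a))^{-1}-(\zeta-T)^{-1}\|$ uniformly as $\zeta$ runs over the compact contour $\Gamma$, with constants governed by $\sup_{\zeta\in\Gamma}\|(\zeta-T)^{-1}\|$. This is precisely the resolvent-perturbation and stability machinery of Kato's Chapter~IV~\cite{kato2013perturbation}, which we invoke; an alternative route is to first prove norm convergence of $(\zeta_0-T(a))^{-1}$ for a single fixed $\zeta_0\in\rho(T)$ and then bootstrap to all of $\Gamma$ via the second resolvent identity, after which the contour integration and the rank-stability of norm-close projections are routine.
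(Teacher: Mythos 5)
Your proposal is correct and is essentially the standard argument from Chapter~IV of Kato's \emph{Perturbation Theory for Linear Operators}, which is exactly the source the paper cites for this proposition; the paper itself states Proposition~\ref{RelaPerOpandEig} without proof as a quoted result. Your route --- fix a compact contour $\Gamma$ in $\rho(T)$ separating the finite system $\Sigma'(T)$ from $\Sigma''(T)$, use the gap-metric resolvent estimates to show $\Gamma\subset\rho(T(a))$ with $\sup_{\zeta\in\Gamma}\|(\zeta-T(a))^{-1}-(\zeta-T)^{-1}\|\to0$, deduce $\|P(a)-P\|\to0$ for the Riesz projections, invoke rank stability of norm-close finite-rank projections to keep the total multiplicity $m$ fixed, and localize $\Sigma'(T(a))$ inside any prescribed neighborhood of $\Sigma'(T)$ by shrinking $\Gamma$ --- is precisely the chain of lemmas in Kato's treatment of ``separation of the spectrum'' and ``stability of the eigenvalues.'' You also correctly identify the only genuinely delicate point, the graph-gap control of the resolvent for unbounded closed $T$ uniformly over the compact contour, and correctly note that in the paper's application the hypotheses of Proposition~\ref{SuffCondGenCon} are what deliver the needed generalized convergence. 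Nothing is missing, and no step fails.
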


\section{Proof of Lemma \ref{Lemma3_2}}
\label{sec::AppendixUpperBound}
\noindent 
% From the random Fourier method mentioned in Section \ref{sec:RanFieRealization}, the approximated velocity field $\tilde{\bm v}$ generated by \eqref{VfourGen} will follow a correlation function $R_{F}$ defined in \eqref{AppCorrFct}. And the true random velocity field $\bm v_r$ will follow the correlation function $R$ defined in \eqref{TrueCorrFct}.
% adjust to operator
In the convergence analysis in \cite{Junlong2022}, the author assumed that the deterministic $\bm b_{0}(\bm x)$ and $c_{0}(\bm x)$ satisfy that $||\bm b_{0}(\bm x)||_{L^2} \leq M_1$, $|c_{0}(\bm x)| \leq M_2$, $||\nabla_{\bm x} c_{0}(\bm x)||_{L^2} \leq M_3$ and $||\Delta_{\bm x} c_{0}(\bm x)||_{L^2} \leq M_4$. Now we analyze these assumptions with the addition of random velocity field $\bm v$
and we use $\tilde{\bm v}$ to be an approximation to $\bm v$ in the practical experiments. 
\begin{proof}
    
By the triangle inequality, we can get that
\begin{align}
    ||\bm b(\bm x,\omega)||_{L^2} &\leq ||\bm b_{0}(\bm x)||_{L^2} + ||\bm b(\bm x,\omega) - \bm b_{0}(\bm x)||_{L^2}  \notag \\
    &\leq M_1 + ||\bm b(\bm x,\omega) - \bm b_{0}(\bm x)||_{L^2}.
    \label{b_rxL2normApp}
\end{align}

We give the analysis for 2D cases as an example, a similar analysis can be done for higher dimensions.  %and due to $\bm v_r$ is a 1D random perturbation, then we have $\bm v_r(\bm x,\omega) = (0, v_r(x,\omega))$. 
We assume that deterministic part of $\bm v$ with period $1$ is denoted as $\bm v_0(\bm x)$. Thus, both $\bm v$ and $\tilde{\bm v}$ have a period $1/\Delta k \in \mathbb{N}^+$ in each spatial dimension. We define $\mathcal{D} = [0, \frac{1}{\Delta k} ]^2$ and assume that both $\bm v$ and $\tilde{\bm v}$ are locally bounded in the bounded domain $\mathcal{D}$. Next, for the term $||\bm b(\bm x,\omega) - \bm b_{0}(\bm x)||_{L^2}$, we can obtain
\begin{equation}
    \mathbb{E}_{\omega}[||\bm b(\bm x,\omega) - \bm b_{0}(\bm x)||^2_{L^2}] %= \mathbb{E}_{\omega}[\int_{\mathcal{D}} |\bm v_r(\bm x,\omega)|^2 d\bm x] 
    = \mathbb{E}_{\omega}[\int_{\mathcal{D}} |\bm v(\bm x,\omega) - \bm v_0(\bm x)|^2 d\bm x]. %= L \cdot \mathbb{E}_{\omega}[\int_{0}^{L} |v_r(x,\omega)|^2 dx].
    \label{equb2}
\end{equation}

We denote $\widehat{\bm v}(\bm k,\omega) \overset{\text{def}}{=} \mathcal{F}[\bm v](\bm k), \widehat{\bm v}_0(\bm k) \overset{\text{def}}{=} \mathcal{F}[\bm v_0](\bm k)$, the Fourier transform of $\bm v(\bm x,\omega)$ and $\bm v_0(\bm x)$. From the definition of energy spectral density, we know that $E(\bm k) = \mathbb{E}_{\omega}[|\widehat{\bm v}(\bm k,\omega) - \widehat{\bm v}_0(\bm k)|^2]$. Due to the spectrum we consider are all isotropic and have finite moments, by applying the Plancherel theorem, we can get the following relationship.
\begin{equation}
   %\mathbb{E}_{\omega}[||\bm v_r(\bm x,\omega)||_{L^2}^2] \leq 
   \mathbb{E}_{\omega}[||\bm b(\bm x,\omega) - \bm b_{0}(\bm x)||^2_{L^2}] \leq 2\pi \sum_{j = 0}^{+\infty} k_j E(k_j) \Delta k \leq m_1,
   %L \cdot \mathbb{E}_{\omega}[\int_{0}^{L} |v_r(x,\omega)|^2 dx] \leq L \int_{-\infty}^{+\infty} \mathbb{E}_{\omega}[|\widehat{v_r}(k,\omega)|^2] dk = L\int_{-\infty}^{+\infty} E(k) dk = R(0) L.
   \label{equb3}
   %= \sqrt{\pi} L.
\end{equation}
where $m_1$ is a finite number due to $E(k)$ decay exponentially as $k \to \infty$. Together \eqref{equb2} and \eqref{equb3}, we can get $\mathbb{E}_{\omega}[||\bm b(\bm x,\omega)||_{L^2}] \leq M_1 + \sqrt{m_1}%\sqrt{R(0) L %\sqrt{\pi} L
$. The variance satisfies $Var(||\bm b(\bm x,\omega)||_{L^2})$ $ \leq \mathbb{E}_{\omega}[||\bm b(\bm x,\omega) - \bm b_{0}(\bm x)||_{L^2}^2] \leq %\sqrt{\pi}
m_1$. Then by applying Chebyshev's inequality, for a probability $p$ close to 1, we can find the corresponding upper bound $\tilde{M}_{1}$, so that $P(||\bm b(\bm x,\omega)||_{L^2} < \tilde{M}_{1}) > p$.
%For $\tilde{\bm v}$, it will have a period $1/\Delta k$ in dimension $x$, then we can give the analysis for $\tilde{\bm v}$ on $[0,\text{lcm}(L,1/\Delta k)]\times [0,L]$. 
Similar analysis can be done for $||\tilde{\bm b}(\bm x, \omega)||_{L^2} = ||-2\kappa \lambda \bm e + \tilde{\bm v}||_{L^2}$ and we can get an upper bound $\tilde{M}_{1}'$. Then we take $M_1^* = \max(\tilde{M}_{1},$ $ \tilde{M}_{1}')$.  Then for the second assumption, we have 
\begin{equation}
    |c(\bm x, \omega)| %= |c_{0}(\bm x) - \lambda \bm v_r(\bm x,\omega) \cdot \bm e| 
    \leq |c_{0}(\bm x)| + \lambda \cdot ||\bm v(\bm x,\omega) - \bm v_0(\bm x)||_{L^2} \leq M_2 + \lambda \cdot ||\bm v(\bm x,\omega) - \bm v_0(\bm x)||_{L^2} .%\lambda \sqrt{\sqrt{\pi} L}\leq M_{2,r}
\end{equation}

Then we can get $\mathbb{E}_{\omega}[|c(\bm x,\omega)|] \leq M_2 + \sqrt{m_1}$. The variance $Var(|c(\bm x,\omega)|) \leq \mathbb{E}_{\omega}[||\bm v(\bm x,\omega) - \bm v_0(\bm x)||_{L^2}^2] \leq m_1$. By applying Chebyshev's inequality, for a probability $p$ close to 1, we can find the corresponding upper bound $\tilde{M}_{2}$, so that $P(|c(\bm x,\omega)| < \tilde{M}_{2}) > p$. Similar analysis can be done for $|\tilde{c}(\bm x, \omega)|$, and we can get an upper bound $\tilde{M}_{2}'$. Then we take $M_2^* = \max(\tilde{M}_{2}, \tilde{M}_{2}')$. Then for the third and fourth assumptions, we have 
\begin{align}
    ||\nabla_{\bm x} c(\bm x,\omega)||_{L^2} &\leq ||\nabla_{\bm x} c_{0}(\bm x)||_{L^2} + \lambda \cdot ||\nabla_{\bm x} (\bm v(\bm x,\omega) - \bm v_0(\bm x))||_{L^2} \\ \notag
    &\leq M_3 + \lambda \cdot ||\nabla_{\bm x} (\bm v(\bm x,\omega) - \bm v_0(\bm x))||_{L^2}, 
\end{align}
\begin{align}
    ||\Delta_{\bm x} c(\bm x,\omega)||_{L^2} &\leq ||\Delta_{\bm x} c_{0}(\bm x)||_{L^2} + \lambda \cdot ||\Delta_{\bm x} (\bm v(\bm x,\omega) - \bm v_0(\bm x))||_{L^2} \\ \notag 
    &\leq M_4 + \lambda \cdot ||\Delta_{\bm x}(\bm v(\bm x,\omega) - \bm v_0(\bm x))||_{L^2} .
\end{align}

Then for the term $||\nabla_{\bm x} (\bm v(\bm x,\omega) - \bm v_0(\bm x))||_{L^2}$ and $||\Delta_{\bm x} (\bm v(\bm x,\omega) - \bm v_0(\bm x))||_{L^2}$ , we have 
\begin{equation}
    ||\nabla_{\bm x} (\bm v(\bm x,\omega) - \bm v_0(\bm x))||_{L^2}^2 
    %= \int_{\mathcal{D}} |\nabla_{\bm x} \bm v_r(\bm x,\omega)| d\bm x 
    = \int_{\mathcal{D}} |\nabla_{\bm x}(\bm v(\bm x,\omega) - \bm v_0(\bm x))|^2 d\bm x %= L \cdot \int_{0}^{L} |\frac{d}{dx} v_r(x,\omega)|^2 dx,
\end{equation}
\begin{equation}
    ||\Delta_{\bm x} (\bm v(\bm x,\omega) - \bm v_0(\bm x))||_{L^2}^2 
    %= \int_{\mathcal{D}} |\Delta_{\bm x} \bm v_r(\bm x,\omega)| d\bm x 
    = \int_{\mathcal{D}}|\Delta_{\bm x} (\bm v(\bm x,\omega) - \bm v_0(\bm x))|^2 d\bm x %dy = L \cdot \int_{0}^{L} |\frac{d^2}{dx^2} v_r(x,\omega)|^2 dx .
\end{equation}

By the properties of the Fourier transform and the Plancherel theorem, we can obtain that
\begin{align}
    \mathbb{E}_{\omega}[||\nabla_{\bm x} (\bm v(\bm x,\omega) - \bm v_0(\bm x))||_{L^2}^2 ] %= L \mathbb{E}_{\omega}[\int_{0}^{L} |\frac{d}{dx} v_r(x,\omega)|^2 dx] 
    %& \leq 2\pi \int_{0}^{\infty} \mathbb{E}_{\omega}[|k \widehat{v_r}(k)|^2] dk \notag\\ 
    &\leq 2\pi \sum_{j = 0}^{\infty} k_j^3 E(k_j) \Delta k < \infty,
    %= 2\Gamma(\frac{7}{2}) L
\end{align}
\begin{align}
   \mathbb{E}_{\omega}[||\Delta_{\bm x} (\bm v(\bm x,\omega) - \bm v_0(\bm x))||_{L^2}^2 ] %= L \mathbb{E}_{\omega}[\int_{0}^{L} |\frac{d}{dx} v_r(x,\omega)|^2 dx] 
    %& \leq 2\pi \int_{0}^{\infty} \mathbb{E}_{\omega}[|k \widehat{v_r}(k)|^2] dk \notag\\ 
    &\leq 2\pi \sum_{j = 0}^{\infty} k_j^5 E(k_j) \Delta k < \infty,
\end{align}

Then we can obtain that 
\begin{equation}
    \mathbb{E}_{\omega}[||\nabla_{\bm x} c(\bm x, \omega)||_{L^2}] \leq M_3 + 2\pi \sum_{j = 0}^{\infty} k_j^3 E(k_j) \Delta k%L \int_{-\infty}^{\infty} k^2 E(k) dk ,%\sqrt{2\Gamma(\frac{7}{2}) L},
\end{equation}
\begin{equation}
     \mathbb{E}_{\omega}[||\Delta_{\bm x} c(\bm x, \omega)||_{L^2}] \leq M_4 + 2\pi \sum_{j =0}^{\infty} k_j^5 E(k_j) \Delta k%L \int_{-\infty}^{\infty} k^4 E(k) dk.
    %\sqrt{2\Gamma(\frac{11}{2}) L} 
    \label{UpperBoundforAss3Ass4}
\end{equation}

And we also know the upper bound of their variance. Following the same idea, we can get $\tilde{M}_{3}$ and $\tilde{M}_{4}$, the upper bound hold with probability greater than $p$. Following the same idea, we can get $\tilde{M}_{3}'$ and $\tilde{M}_{4}'$. Then we take $M_3^* = \max(\tilde{M}_{3}, \tilde{M}_{3}')$ and $M_4^*= \max(\tilde{M}_{4},\tilde{M}_{4}')$. We have already computed the assumptions' upper bound distributions after adding a random perturbation with specific energy spectral density. 
\end{proof}
\section{Proof for Theorem \ref{Thm::e3_5}}
\label{Sec::AppendixTheorem3_5}
\begin{proof}
By the operators defined in Lemma \ref{Lemmma:Defpandupperbound}, we can get that
\begin{equation}
    \mathcal{U} (T,0) - \prod_{h=1}^H e^{\Delta t_h \tilde{\mathcal{L}} (\bm x,\omega)} e^{\Delta t_h \tilde{\mathcal{C}} (\bm x,\omega)} = e^{\gamma T} \big( \mathcal{U}_{\gamma} (T,0) - \prod_{h=1}^H e^{\Delta t_h \tilde{\mathcal{L}}_{\gamma}} e^{\Delta t_h \tilde{\mathcal{C}}_{\gamma}}\big).
\end{equation}

By the telescopic sum argument, we obtain that for any $q \in \mathcal{D}$,

\begin{align}
    e_3 = &||\prod_{h=1}^H e^{\Delta t_h \mathcal{L} (\bm x,\omega)} e^{\Delta t_h \mathcal{C} (\bm x,\omega)}q - \prod_{h=1}^H e^{\Delta t_h \tilde{\mathcal{L}} (\bm x,\omega)} e^{\Delta t_h \tilde{\mathcal{C}} (\bm x,\omega)} q ||_{L^2} \notag \\
     = & e^{\gamma T} \big| \big| \sum_{j=1}^H \prod_{h = j+1}^H e^{\Delta t_h \mathcal{L}_{\gamma}} e^{\Delta t_h \mathcal{C}_{\gamma}} (e^{\Delta t_j \mathcal{L}_{\gamma}} e^{\Delta t_j \mathcal{C}_{\gamma}} - e^{\Delta t_j \tilde{\mathcal{L}}_{\gamma}} e^{\Delta t_j \tilde{\mathcal{C}}_{\gamma}}) \prod_{i=1}^{j-1} e^{\Delta t_i \tilde{\mathcal{L}}_{\gamma}} e^{\Delta t_i \tilde{\mathcal{C}}_{\gamma}} q  \big| \big|_{L^2} \notag \\
    \leq & e^{\gamma T}||\sum_{j=1}^H (e^{\Delta t_j \mathcal{L}_{\gamma}} e^{\Delta t_j \mathcal{C}_{\gamma}} - e^{\Delta t_j \tilde{\mathcal{L}}_{\gamma}} e^{\Delta t_j \tilde{\mathcal{C}}_{\gamma}}) q ||_{L^2}.
    \label{ErrEst3part}
\end{align}
% need to explain the assumption. 

To simplify, we denote $\mathcal{L}_{\gamma} =\mathcal{L}_{\gamma}(\bm x,\omega)= \mathcal{L}(\bm x,\omega) - \tilde \gamma$ and $\mathcal{C}_{\gamma} = \mathcal{C}_{\gamma}(\bm x, \omega) = \mathcal{C}(\bm x,\omega) - M_2^*$. This step is to guarantee that $||e^{\Delta t \mathcal{L}_{\gamma}(\bm x,\omega)}||_{L^2} \leq 1$ and $||e^{\Delta t \mathcal{C}_{\gamma}(\bm x,\omega)}||_{L^2}\leq 1$ are both satisfied with probability greater than $p$. Same for $\tilde{\mathcal{L}}_{\gamma}$ and $\tilde{\mathcal{C}}_{\gamma}$. By expressing the term $e^{\Delta t \mathcal{C}_{\gamma}}$ and $e^{\Delta t \tilde{\mathcal{C}}_{\gamma}}$ into exponential series, we can obtain
\begin{equation}
    e^{\Delta t \mathcal{L}_{\gamma}} e^{\Delta t \mathcal{C}_{\gamma}} q =  e^{\Delta t \mathcal{L}_{\gamma}}q  +  (\Delta t) e^{\Delta t \mathcal{L}_{\gamma}} \mathcal{C}_{\gamma} q + r_1(\bm x,\omega) q,
    \label{TaylorexpL'}
\end{equation}
\begin{equation}
    e^{\Delta t \tilde{\mathcal{L}}_{\gamma}} e^{\Delta t \tilde{\mathcal{C}}_{\gamma}} q =  e^{\Delta t \tilde{\mathcal{L}}_{\gamma}} q +  (\Delta t)e^{\Delta t \tilde{\mathcal{L}}_{\gamma}} \tilde{\mathcal{C}}_{\gamma} q + r_{1}'(\bm x, \omega) q.
\end{equation}
Here the residual terms $||r_1(\bm x,\omega)||_{L^2} \leq \frac{1}{2} (\Delta t)^2 ||\mathcal{C}_{\gamma}(\bm x,\omega)||^2_{L^2}, ||r_{1}'(\bm x, \omega)||_{L^2} $ $\leq \frac{1}{2} (\Delta t)^2$ $||\tilde{\mathcal{C}}_{\gamma}(\bm x,\omega)||^2_{L^2}$. Then we denote $r_1 = r_1(\bm x,\omega),\ r_{1}'=r_{1}'(\bm x, \omega)$ and let the two equations above subtract
\begin{align}
    LHS & = (e^{\Delta t \mathcal{L}_{\gamma}} - e^{\Delta t \tilde{\mathcal{L}}_{\gamma}})q + (\Delta t)e^{\Delta t \mathcal{L}_{\gamma}} \mathcal{C}_{\gamma} q - (\Delta t)e^{\Delta t \tilde{\mathcal{L}}_{\gamma}} \tilde{\mathcal{C}}_{\gamma} q + r_1 q - r_{1}' q \notag \\
    & = (e^{\Delta t \mathcal{L}_{\gamma}} - e^{\Delta t \tilde{\mathcal{L}}_{\gamma}})\big(1 + \Delta t\mathcal{C}_{\gamma}\big) q + (\Delta t)e^{\Delta t \tilde{\mathcal{L}}_{\gamma}} (\mathcal{C}_{\gamma} - \tilde{\mathcal{C}}_{\gamma}) q + (r_1 - r_{1}') q.
    \label{eLFeCFest}
\end{align}

Then we define $\mathcal{G}(\bm x,\omega) = \mathcal{L}_{\gamma}(\bm x,\omega) - \tilde{\mathcal{L}}_{\gamma}(\bm x,\omega)$ and $\mathcal{H}(\bm x,\omega) = \mathcal{C}_{\gamma}(\bm x,\omega) - \tilde{\mathcal{C}}_{\gamma}(\bm x,\omega)$. By the freezing coefficient formula, we obtain
\begin{align}
    e^{\Delta t \mathcal{L}_{\gamma}(\bm x,\omega)} - e^{\Delta t \tilde{\mathcal{L}}_{\gamma}(\bm x,\omega)} = & e^{\Delta t \tilde{\mathcal{L}}_{\gamma}(\bm x,\omega)} + \int_{0}^{\Delta t} e^{s \tilde{\mathcal{L}}_{\gamma}(\bm x,\omega)} \mathcal{G}(\bm x,\omega) e^{(\Delta t - s) \mathcal{L}_{\gamma}(\bm x,\omega)} ds  \notag \\
    & - e^{\Delta t \tilde{\mathcal{L}}_{\gamma}(\bm x,\omega)} = \int_{0}^{\Delta t} e^{s \tilde{\mathcal{L}}_{\gamma}(\bm x,\omega)} \mathcal{G}(\bm x,\omega) e^{(\Delta t - s) \mathcal{L}_{\gamma}(\bm x,\omega)} ds.
    \label{FreeCoeffofL_LF}
\end{align}

We denote $\mathcal{G} = \mathcal{G}(\bm x,\omega),\ \mathcal{H} = \mathcal{H}(\bm x, \omega)$. Then we substitute the above equation \eqref{FreeCoeffofL_LF} into \eqref{eLFeCFest} and compute its $L^2$ norm. 
\begin{align}
    & ||(e^{\Delta t \mathcal{L}_{\gamma}} - e^{\Delta t \tilde{\mathcal{L}}_{\gamma}})\big(1 + \Delta t\mathcal{C}_{\gamma}\big) q + (\Delta t)e^{\Delta t \tilde{\mathcal{L}}_{\gamma}} (\mathcal{C}_{\gamma} - \tilde{\mathcal{C}}_{\gamma}) q + (r_1 - r_{1}') q ||_{L^2} \notag \\
     \leq\ &||(e^{\Delta t \mathcal{L}_{\gamma}} - e^{\Delta t \tilde{\mathcal{L}}_{\gamma}})\big(1 + \Delta t\mathcal{C}_{\gamma}\big) q||_{L^2} + ||(\Delta t)e^{\Delta t \tilde{\mathcal{L}}_{\gamma}} \mathcal{H} q||_{L^2} + ||(r_1 - r_{1}') q ||_{L^2} \notag \\
    \leq \ & ||(e^{\Delta t \mathcal{L}_{\gamma}} - e^{\Delta t \tilde{\mathcal{L}}_{\gamma}})\big(1 + \Delta t\mathcal{C}_{\gamma}\big) q||_{L^2} + (\Delta t)|| \mathcal{H} q||_{L^2}  \notag \\
    & + \frac{1}{2}(\Delta t)^2 (||\mathcal{C}_{\gamma}||^2_{L^2} + ||\tilde{\mathcal{C}}_{\gamma}||^2_{L^2})||q||_{L^2} \notag \\
    \leq \ & (\Delta t)||\mathcal{G} q||_{L^2} + (\Delta t)^2 ||\mathcal{G} \mathcal{C}_{\gamma}q||_{L^2} + (\Delta t)|| \mathcal{H} q||_{L^2} \notag \\
    &+ \frac{1}{2}(\Delta t)^2 (||\mathcal{C}_{\gamma}||^2_{L^2} + ||\tilde{\mathcal{C}}_{\gamma}||^2_{L^2}) ||q||_{L^2}.
    \label{erreste3partdeltat}
\end{align}

Then substitute this result back to \eqref{ErrEst3part}, and we can get the error estimate 
\begin{align}
    e_3 &= || \prod_{h=1}^H e^{\Delta t_h \mathcal{L} (\bm x,\omega)} e^{\Delta t_h \mathcal{C} (\bm x,\omega)}q - \prod_{h=1}^H e^{\Delta t_h \tilde{\mathcal{L}} (\bm x,\omega)} e^{\Delta t_h \tilde{\mathcal{C}} (\bm x,\omega)} q ||_{L^2} \notag \\
    &\leq  C_1(q,\omega) \Delta t + C_2(||\mathcal{G}q||_{L^2} + ||\mathcal{H}q||_{L^2}) .
\end{align}
Here the coefficients $C_1(q,\omega) =T e^{\gamma T} \cdot \Big(||\mathcal{G}(\bm x,\omega) \mathcal{C}_{\gamma}(\bm x,\omega)q||_{L^2} +$$ \frac{1}{2} (||\mathcal{C}_{\gamma}(\bm x,\omega)||^2_{L^2} + ||\tilde{\mathcal{C}}_{\gamma}(\bm x, \omega) ||^2_{L^2})$ $||q||_{L^2}\Big)$,\ $C_2 = Te^{\gamma T}$. Then due to $||\tilde{\mathcal{C}}_{\gamma}(\bm x, \omega) ||^2_{L^2}$,  $||\mathcal{C}_{\gamma}(\bm x, \omega) ||^2_{L^2} $ $<1$ are both satisfied with probability greater than $p$, we can obtain that $prob(B_1) > p$, where $B_1=$
\begin{equation}
    \Big\{ e_3 \leq Te^{\gamma T} \cdot \big(||\bm v - \tilde{\bm v}||_{L^2}\cdot \big(||\nabla_{\bm x} q||_{L^2} + \lambda ||q||_{L^2}\big) + (\Delta t) \big(||\bm v - \tilde{\bm v}||_{L^2}\cdot||\nabla_{\bm x} q||_{L^2} + ||q||_{L^2}\big)\big) \Big\}.
\end{equation}

\end{proof}

\section{Proof of Theorem \ref{Thm::Errofe3ande}}
\label{sec::AppendixProofThm3_7}
\begin{proof}

\begin{align}
    e^{\Delta t \mathcal{L}_{\gamma}^{j} } e^{\Delta t \mathcal{C}_{\gamma}^{j} } - & e^{\Delta t \mathcal{L}_{\gamma} }e^{\Delta t \mathcal{C}_{\gamma} } 
    = e^{\Delta t \mathcal{L}_{\gamma}}e^{\Delta t \mathcal{C}_{\gamma}} [e^{-\Delta t \mathcal{C}_{\gamma}} e^{-\Delta t \mathcal{L}_{\gamma}} - e^{-\Delta t \mathcal{C}_{\gamma}^{j}} e^{-\Delta t \mathcal{L}_{\gamma}^{j}}]e^{\Delta t \mathcal{L}_{\gamma}^{j}} e^{\Delta t \mathcal{C}_{\gamma}^{j}} \notag \\
     = &e^{\Delta t \mathcal{L}_{\gamma}}e^{\Delta t \mathcal{C}_{\gamma}} [e^{-\Delta t \mathcal{C}_{\gamma}} e^{-\Delta t \mathcal{L}_{\gamma}} - e^{-\Delta t \mathcal{C}_{\gamma}^{j}} e^{-\Delta t \mathcal{L}_{\gamma}^{j}}]e^{\Delta t \mathcal{L}_{\gamma}} e^{\Delta t \mathcal{C}_{\gamma}}\ + \notag\\
    & e^{\Delta t \mathcal{L}_{\gamma}}e^{\Delta t \mathcal{C}_{\gamma} } [e^{-\Delta t \mathcal{C}_{\gamma} } e^{-\Delta t \mathcal{L}_{\gamma} } - e^{-\Delta t \mathcal{C}_{\gamma}^{j} } e^{-\Delta t \mathcal{L}_{\gamma}^{j} }][e^{\Delta t \mathcal{L}_{\gamma}^{j} } e^{\Delta t \mathcal{C}_{\gamma}^{j} } - e^{\Delta t \mathcal{L}_{\gamma} } e^{\Delta t \mathcal{C}_{\gamma} }].
\end{align}

Now applying $e^{\Delta t \mathcal{L}_{\gamma}^{j} } e^{\Delta t \mathcal{C}_{\gamma}^{j} }  - e^{\Delta t \mathcal{L}_{\gamma} }e^{\Delta t \mathcal{C}_{\gamma} }$ on $q$, we can get
\begin{align}
    ||e^{\Delta t \mathcal{L}_{\gamma}^{j} } & e^{\Delta t \mathcal{C}_{\gamma}^{j} }q  - e^{\Delta t \mathcal{L}_{\gamma} }e^{\Delta t \mathcal{C}_{\gamma} } q||_{L^2} \leq  ||e^{-\Delta t \mathcal{C}_{\gamma}} e^{-\Delta t \mathcal{L}_{\gamma}} - e^{-\Delta t \mathcal{C}_{\gamma}^{j}} e^{-\Delta t \mathcal{L}_{\gamma}^{j}}||_{L^2} \cdot \notag \\ &||e^{\Delta t \mathcal{L}_{\gamma} }e^{\Delta t \mathcal{C}_{\gamma}}||_{L^2}^2 \cdot ||q||_{L^2} 
    + ||e^{\Delta t \mathcal{L}_{\gamma} }e^{\Delta t \mathcal{C}_{\gamma}}||_{L^2} \cdot \notag \\ 
    &||e^{-\Delta t \mathcal{C}_{\gamma}} e^{-\Delta t \mathcal{L}_{\gamma}} - e^{-\Delta t \mathcal{C}_{\gamma}^{j}} e^{-\Delta t \mathcal{L}_{\gamma}^{j}}||_{L^2} \cdot ||e^{\Delta t \mathcal{L}_{\gamma}^{j} } e^{\Delta t \mathcal{C}_{\gamma}^{j} }q  - e^{\Delta t \mathcal{L}_{\gamma} }e^{\Delta t \mathcal{C}_{\gamma} } q||_{L^2}.
    \label{Convergencerateofe3}
\end{align}

Then for the term $||e^{-\Delta t \mathcal{C}_{\gamma}} e^{-\Delta t \mathcal{L}_{\gamma}} - e^{-\Delta t \mathcal{C}_{\gamma}^{j}} e^{-\Delta t \mathcal{L}_{\gamma}^{j}}||_{L^2}$, we can define $C_{\gamma} = e^{2\gamma\Delta t}, \mathcal{C}_{\bar{\gamma}} = \mathcal{C} + M_2^*,\ \mathcal{L}_{\bar{\gamma}} = \mathcal{L} + \tilde \gamma$ and similarly get $\mathcal{C}_{\bar{\gamma}}^{j}$ and $\mathcal{L}_{\bar{\gamma}}^{j}$. Here $M_2^*$ and $\tilde \gamma$ are the upper bound defined in Lemma \ref{Lemmma:Defpandupperbound}. Then following the idea above, we can get 
\begin{align}
    ||e^{-\Delta t \mathcal{C}_{\gamma}} e^{-\Delta t \mathcal{L}_{\gamma}} - e^{-\Delta t \mathcal{C}_{\gamma}^{j}} e^{-\Delta t \mathcal{L}_{\gamma}^{j}}||_{L^2} = C_{\gamma} ||e^{-\Delta t \mathcal{C}_{\bar{\gamma}}} e^{-\Delta t \mathcal{L}_{\bar{\gamma}}} - e^{-\Delta t \mathcal{C}_{\bar{\gamma}}^{j}} e^{-\Delta t \mathcal{L}_{\bar{\gamma}}^{j}}||_{L^2}.
\end{align}

Then we can estimate $||e^{-\Delta t \mathcal{C}_{\bar{\gamma}}} e^{-\Delta t \mathcal{L}_{\bar{\gamma}}} - e^{-\Delta t \mathcal{C}_{\bar{\gamma}}^{j}} e^{-\Delta t \mathcal{L}_{\bar{\gamma}}^{j}}||_{L^2}$ following the idea mentioned from \eqref{TaylorexpL'} to \eqref{erreste3partdeltat}. Here we need to emphasize that for higher order terms of $\Delta t$ in the expansion, we still end up with the factor $||\bm v - \bm v_{j}||_{L^2}$. So we can get that 
$\lim_{j \to \infty} ||e^{-\Delta t \mathcal{C}_{\bar{\gamma}}} e^{-\Delta t \mathcal{L}_{\bar{\gamma}}} - e^{-\Delta t \mathcal{C}_{\bar{\gamma}}^{j}} e^{-\Delta t \mathcal{L}_{\bar{\gamma}}^{j}}||_{L^2} = 0$. Then we can assume for $j \geq N^*$, 
\begin{equation}
    ||e^{\Delta t \mathcal{L}_{\gamma} }e^{\Delta t \mathcal{C}_{\gamma}}||_{L^2} \cdot ||e^{-\Delta t \mathcal{C}_{\gamma}} e^{-\Delta t \mathcal{L}_{\gamma}} - e^{-\Delta t \mathcal{C}_{\gamma}^{j}} e^{-\Delta t \mathcal{L}_{\gamma}^{j}}||_{L^2} \leq \frac{1}{2}.
\end{equation}

Then substitute back into \eqref{Convergencerateofe3}, we can get for $\forall j \geq N^*$
\begin{align}
    ||e^{\Delta t \mathcal{L}_{\gamma}^{j} } e^{\Delta t \mathcal{C}_{\gamma}^{j} }q - e^{\Delta t \mathcal{L}_{\gamma} }e^{\Delta t \mathcal{C}_{\gamma} } q||_{L^2} \leq\ & 2\ ||e^{-\Delta t \mathcal{C}_{\gamma}} e^{-\Delta t \mathcal{L}_{\gamma}} - e^{-\Delta t \mathcal{C}_{\gamma}^{j}} e^{-\Delta t \mathcal{L}_{\gamma}^{j}}||_{L^2} \cdot \notag \\ &||e^{\Delta t \mathcal{L}_{\gamma} }e^{\Delta t \mathcal{C}_{\gamma}}||_{L^2}^2 \cdot ||q||_{L^2}.
\end{align}

Then we can get for $\forall j \geq N^*$
\begin{equation}
    ||e^{\Delta t \mathcal{L}_{\gamma}^{j} } e^{\Delta t \mathcal{C}_{\gamma}^{j} }q - e^{\Delta t \mathcal{L}_{\gamma} }e^{\Delta t \mathcal{C}_{\gamma} } q||_{L^2} \leq 2 ||e^{-\Delta t \mathcal{C}_{\gamma}} e^{-\Delta t \mathcal{L}_{\gamma}} - e^{-\Delta t \mathcal{C}_{\gamma}^{j}} e^{-\Delta t \mathcal{L}_{\gamma}^{j}}||_{L^2} \cdot ||q||_{L^2}.
\end{equation}

So that for  $||e^{\Delta t \mathcal{L}_{\gamma}^{j} } e^{\Delta t \mathcal{C}_{\gamma}^{j} }q - e^{\Delta t \mathcal{L}_{\gamma} }e^{\Delta t \mathcal{C}_{\gamma} } q||_{L^2}$, it will have a convergence rate $O\big(||\bm v - \bm v_{j}|| \cdot (\Delta t + (\Delta t)^2)\big)$. Then for $2 ||e^{-\Delta t \mathcal{C}_{\gamma}} e^{-\Delta t \mathcal{L}_{\gamma}} - e^{-\Delta t \mathcal{C}_{\gamma}^{j}} e^{-\Delta t \mathcal{L}_{\gamma}^{j}}||_{L^2} \cdot ||q||_{L^2}$, we have
\begin{align}
    LHS =&\ 2 e^{2\gamma \Delta t} ||e^{-\Delta t \mathcal{C}_{\bar{\gamma}}} e^{-\Delta t \mathcal{L}_{\bar{\gamma}}} - e^{-\Delta t \mathcal{C}_{\bar{\gamma}}^{j}} e^{-\Delta t \mathcal{L}_{\bar{\gamma}}^{j}}||_{L^2} \cdot ||q||_{L^2} \notag \\
    \leq &\ 2 e^{2\gamma \Delta t} \big[\Delta t ||\mathcal{G}||_{L^2}  + (\Delta t)^2 ||\mathcal{C}_{\bar{\gamma}} \mathcal{G}||_{L^2} + \Delta t ||\mathcal{H}||_{L^2} + \notag \\ 
    & \frac{1}{2} (\Delta t)^2 (||\mathcal{C}_{\bar{\gamma}}||_{L^2} + ||\mathcal{C}_{\bar{\gamma}}^{j}||_{L^2})\big] \cdot ||q||_{L^2} .
\end{align}
Here $\mathcal{G}$ and $\mathcal{H}$ are defined in \ref{Thm::e3_34}. Then for the entire evolution process, we can obtain that 
\begin{align}
    e_3 \leq &\ 2T e^{\gamma (T + 2\Delta t)} \big[||\mathcal{G}||_{L^2}  + (\Delta t) ||\mathcal{C}_{\bar{\gamma}} \mathcal{G}||_{L^2} + ||\mathcal{H}||_{L^2} +  \notag \\
    & \frac{\Delta t}{2} (||\mathcal{C}_{\bar{\gamma}}||_{L^2} + ||\mathcal{C}_{\bar{\gamma}}^{j}||_{L^2})\big] \cdot ||q||_{L^2} \notag \\
    \leq &\ 2T e^{\gamma (T + 2\Delta t)} \big[
    2M_2^* \Delta t + ||\bm v - \bm v_{j}||_{L^2} (C_D(p) + \lambda) +  \notag \\
    &\ 2M_2^* C_D(p) \Delta t ||\bm v - \bm v_{j}||_{L^2}\big] \cdot ||q||_{L^2}.
\end{align}
Here $C_D(p)$ is a constant that depends on the selection of $\gamma$ which means it will depend on $p$. Then we can get 
\begin{align}
    e_3 \leq 2T e^{\gamma (T + 2\Delta t)} \big[(M_2^*)^2 (\Delta t)^2 + 2M_2^* \Delta t + & (C_D(p)+\lambda) ||\bm v - \bm v_{j}||_{L^2} + \notag \\
    & C_D(p)^2 ||\bm v - \bm v_{j}||^2_{L^2} \big]\cdot ||q||_{L^2} .
    \label{e3ConBeforeCauchy} 
\end{align}

Since $\lim_{\Delta t \to 0} \frac{(M_2^*)^2 (\Delta t)^2}{2M_2^* \Delta t} = 0$ and $\lim_{j \to \infty} \frac{C_D(p)^2 ||\bm v - \bm v_{j}||^2_{L^2}}{(C_D(p)+\lambda) ||\bm v - \bm v_{j}||_{L^2}} = 0$, as $\Delta t \to 0$ and $j \to \infty$, we have
%As $\Delta t$ getting smaller and $j$ increasing, the dominated term will be $\big(2M_2^* \Delta t + (C_D(p)+\lambda) ||\bm v - \bm v_{j}||_{L^2}\big)\cdot ||q||_{L^2}$ in \eqref{e3ConBeforeCauchy}. So for the entire evolution process, 

\begin{equation}
    e_3 \leq  2T e^{\gamma (T + 2\Delta t)} \big[2M_2^* \Delta t +  (C_D(p)+\lambda) ||\bm v - \bm v_{j}||_{L^2} \big]\cdot ||q||_{L^2} + O((\Delta t)^2) + O(||\bm v - \bm v_{j}||^2_{L^2}).
\end{equation}
By applying the Cauchy-Schwarz inequality, we can get 
\begin{equation}
    2M_2^* \Delta t + (C_D(p)+\lambda) ||\bm v - \bm v_{j}||_{L^2} \leq \sqrt{4(M_2^*)^2+(C_D(p) + \lambda)^2} \sqrt{(\Delta t)^2 + ||\bm v - \bm v_{j}||_{L^2}^2}.
\end{equation}

By denoting $C_3(\lambda, p) = 4Te^{\gamma(T+1)}\cdot \sqrt{4(M_2^*)^2+(C_D(p) + \lambda)^2}$, we can obtain \eqref{Theorem3.6e3Con}. Then due to the error estimate of $e_1$, $e_2$ and $O(\frac{4 C_s}{3}(\Delta t)^{\frac{1}{2}} + 2M_2^* \Delta t) = O(\frac{4 C_s}{3}(\Delta t)^{\frac{1}{2}})$, the total error estimate $e_{op}$ in approximating the solution operator $\mathcal{U}(T,0)$ will be 
\begin{equation}  
    e_{op} = O\big(\frac{4 C_s}{3}(\Delta t)^{\frac{1}{2}} \big) + O\big((C_D(p) + \lambda)||\bm v - \bm v_{j}||_{L^2}\big).
\end{equation}  
\end{proof}

\bibliographystyle{siamplain}
\bibliography{references}
\end{document}